\theoremstyle{plain}
\newtheorem{theo}{{\bf Theorem}}[section]
\newtheorem{lemma}[theo]{{\bf Lemma}}
\newtheorem{prop}[theo]{{\bf Proposition}}
\newtheorem{cor}[theo]{{\bf Corollary}}
\newtheorem{conj}[theo]{{\bf Conjecture}}
\theoremstyle{remark}
\newtheorem{defn}[theo]{{\bf Definition}}
\newtheorem{remark}[theo]{{\bf Remark}}
\newtheorem{eg}[theo]{{\bf Example}}
\newcommand{\TPSS}{S^{\hspace{.2mm}2} \mbox{$\times
\hspace{-2.8mm}_{-}$} \, S^{\hspace{.1mm}2}}
\author[1] {Biplab Basak}
\author[2] { Jonathan Spreer}
\affil[1] {Department of Mathematics, Indian Institute of Science,
Bangalore 560\,012, India. biplab10@math.iisc.ernet.in.}
\affil[2] {School of Mathematics and Physics, The University of Queensland,
Brisbane QLD 4072, Australia. j.spreer@uq.edu.au.}
\title{Simple crystallizations of 4-manifolds}
\date{April 10, 2015}
\begin{document}
\maketitle

\vspace{-10mm}

\begin{abstract}
	Minimal crystallizations of simply connected PL $4$-manifolds
	are very natural objects. Many of their topological features
	are reflected in their combinatorial structure which, in addition,
	is preserved under the connected sum operation.

	We present a minimal crystallization of the standard PL K3 surface.
	In combination with known results this yields minimal crystallizations
	of all simply connected PL 
	$4$-manifolds of ``standard'' type, that is, all connected sums of 
	$\mathbb{CP}^2$, $S^2 \times S^2$, and the K3 surface. In particular,
	we obtain minimal crystallizations of a pair of homeomorphic 
	but non-PL-homeomorphic $4$-manifolds.

	In addition, we give an elementary proof that the minimal $8$-vertex 
	crystallization of $\mathbb{CP}^2$ is 
	unique and its associated pseudotriangulation is related to the $9$-vertex 
	combinatorial triangulation of $\mathbb{CP}^2$ by the minimum of four edge 
	contractions.
\end{abstract}

\noindent {\small {\em MSC 2010\,:} Primary 57Q15. Secondary 57Q05, 57N13, 05C15.

\noindent {\em Keywords:} pseudotriangulations of manifolds, (simple) crystallizations, intersection form, simply connected $4$-manifolds.
}

\medskip

\hrule

\bigskip

\section{Introduction}

In this article, we will consider the following straightforward generalization of simplicial complexes:
A $d$-dimensional simplicial cell complex, or $\Delta$-complex in the terminology of \cite{Hatcher2002AlgTop}, will be
called {\em $k$-simple}, $k \leq d$, if its set of $k$-dimensional faces forms a simplicial complex.

In particular, we are interested in $1$-simple simplicial cell complexes (which for brevity we will just
refer to as {\em simple}) where the underlying space
is a (simply connected) $4$-manifold and the $1$-skeleton equals the $1$-skeleton of a single $4$-simplex.
Such an object will be called a {\em simple contracted pseudotriangulation} of a $4$-manifold,
and can be described in terms of a $5$-colored graph,
which will be called a {\em simple crystallization} of the manifold. Converting between these two representations
is straightforward. Thus, statements about simple crystallizations will naturally transform to statements 
about simple contracted pseudotriangulations and vice versa. 

Simple contracted pseudomanifolds 
(and hence simple crystallizations) have a number of convenient properties, namely:

\newpage
\begin{enumerate}[(i)]
	\item they are simply connected by construction; 
	\item as a consequence, using Freedman's classification, the homeomorphism
		problem for simple contracted pseudomanifolds is decidable and can be 
		determined in a polynomial time procedure;
	\item for a given $4$-manifold $\mathbb{M}$ they have the minimum number of faces amongst all 
		pseudotriangulations of $\mathbb{M}$. In particular, their topologies are presented in very 
		compact forms;
	\item they always have five vertices and ten edges. Hence, their Euler characteristic,
		homology and the rank for their intersection form are
		determined by their number of facets. In particular, two simple pseudotriangulations 
		of two $4$-manifolds coincide in homology if and only if they have identical $f$-vectors;
	\item the connected sum of two simple crystallizations is again
		a simple crystallization. Thus, modulo the $11/8$-conjecture, simple crystallizations of all 
		topological types of simply connected PL $4$-manifolds can be constructed 
		from simple crystallizations of $S^4$, $\mathbb{CP}^2$, $S^2 \times S^2$ and the K3 surface,
		a prime (i.e., indecomposable by nontrivial connected sums) 
		simply connected PL $4$-manifold with even intersection form of signature $16$ and 
		rank $22$, first mentioned in \cite{Weil58K3Surf} (see also Section~\ref{intersection}).
\end{enumerate}
For these reasons, simple crystallizations, or simple contracted pseudotriangulations, 
are excellent objects to work with in the class of simply connected PL $4$-manifolds.
In many ways they correspond to the famous class of $3$-neighborly combinatorial $4$-manifolds in 
the world of simplicial complexes \cite{Casella01TrigK3MinNumVert,Kuehnel83The9VertComplProjPlane,Lutz03TrigMnfFewVertVertTrans}.
For instance, both simple crystallizations and $3$-neighborly combinatorial $4$-manifolds
are simply connected by construction, and for both settings the homology groups of the underlying manifold
are determined by their number of pieces.

However, not all simply connected PL $4$-manifolds of ``standard type'', i.e., connected sums of $\mathbb{CP}^2$,
$S^2 \times S^2$, the K3 surface and their copies with opposite orientation, can be described by $3$-neighborly simplicial complexes 
(for instance, $S^2 \times S^2$ does not admit such a triangulation, see \cite{Kuehnel83Uniq3Nb4MnfFewVert}).
Moreover, the essential $3$-neighborliness property is not preserved under taking the connected sum (cf. property (v) in the above list). 

\medskip
Simple crystallizations, and hence simple contracted pseudotriangulations, of simply connected $4$-manifolds
have been continuously studied over the last decades. The $8$-vertex crystallization of $\mathbb{CP}^2$ 
is a well-known object in the literature \cite{Cavicchioli08ClassComb4Mflds,Chiavacci93LinkingMinTrig,Gagliardo89CP2}.
Here, we complement this result by giving an elementary proof that there is a unique simple 
crystallization of the complex projective plane (this fact, of course, also follows from
the classification). 
A simple crystallization of $S^2 \times S^2$ is presented in \cite{Cavicchioli08ClassComb4Mflds,Ferri82S2xS2} and
all simple crystallizations with intersection form of rank up to two
have been recently classified due to work by Casali and Gagliardi 
\cite{Casali02CodeForMBiPartiteColouredGraphs}, Casali and Cristofori \cite{CasaliDUKEIII,Casali13ColouredGraphs,Casali14Cataloguing},
and Casali \cite{Casali12Crystallizations}.
In addition, most recently Casali {\em et al.} \cite{Casali14GemCompl} gave a Dehn-Sommerville type argument to link the gem-complexity
of simple crystallizations to the topology of the underlying manifold 
(cf. Remark~\ref{rem:DSE} in Section~\ref{sec:simpCryst}).

Furthermore, we present the first simple crystallization of the K3 surface.
This completes the list of simple crystallizations of all known simply connected prime PL $4$-manifolds.
As a consequence, by the connected sum property (v), we thus obtain simple crystallizations 
of simply connected manifolds of the form $k \,\mathbb{CP}^2 \,\, \#  \,\, l \,\overline{\mathbb{CP}^2}$ 
and $m \, K3 \,\, \# \,\, r \, S^2 \times S^2$, for all $k,l,m,r \geq 0$. This is a complete list
of all topological types of simply connected $4$-manifolds known to admit PL structures.

\medskip
Simple contracted pseudotriangulations have yet another important property. They are all isolated
global minima in the {\em Pachner graph}\footnote{The Pachner graph is the graph whose vertices
are complexes and two complexes are connected by an edge if and only if there is a
Pachner move turning one complex into the other. The Pachner graph is sometimes also referred to as the {\em flip graph}}
of pseudotriangulations (i.e., they don't allow $k$-moves, $k > 0$). By Pachner's theorem
\cite{Pachner87KonstrMethKombHomeo} (see for example \cite{Burton144Mflds} for a version for pseudotriangulations), 
the connected components of the Pachner graph precisely describe PL-homeomorphism classes. That is, two simplicial cell complexes
are PL-homeomorphic if and only if one can be turned into the other by a sequence of Pachner moves (in other words, if 
they are connected by a path in the Pachner graph). In practice, this result is a very useful tool 
to establish PL-equivalence between pairs of simplicial cell complexes
(see \cite{Bjoerner00SimplMnfBistellarFlips,simpcomp} for more about Pachner moves for simplicial complexes, 
and \cite{Burton11PachnerGraph,Burton09Regina,Burton13CombinatorialDiffeomorphisms} for the generalized triangulations setting).
Typically, this is done by repeatedly applying Pachner moves to both of the complexes. Each complex generated this way is known
to be PL-homeomorphic to the complex it has been constructed from. This way, we build two sets of PL-equivalent complexes, 
both representing connected subgraphs of the Pachner graph. Now the two complexes are PL-homeomorphic if and only if these two subgraphs
eventually overlap in a joint vertex in the Pachner graph (that is, if the two sets of PL-equivalent complexes overlap
in a joint complex). However, finding such a joint complex is extremely 
difficult due to the often very large number of complexes. Thus having well-defined, small regions in the Pachner 
graph were both subgraphs have a greater chance to meet is essential. 
Local minima are excellent candidates for such {\em meeting points}.

In Section~\ref{sec:compExpts}, we present a
heuristic routine to produce simple crystallizations from pseudotriangulations of $4$-manifolds.
Because of their special property, we believe that this heuristic method will be useful in a number of 
further applications, such as an ongoing project about PL-homeomorphisms for triangulated $4$-manifolds 
\cite{Burton13CombinatorialDiffeomorphisms,Burton144Mflds}.

\section{Preliminaries}

\subsection{Contracted pseudotriangulations}
\label{contrPT}

A $d$-dimensional CW-complex $K$ is said to be {\em regular} if the attaching maps which define the 
incidence structure of $K$ are homeomorphisms and the maximum dimension over all cells of $K$
is $d$. Given a regular CW-complex $K$, 
let ${\mathcal K}$ be the set of all closed cells of $K$ together with the empty set. 
Then ${\mathcal K}$ is a poset, where the partial ordering is the set inclusion. 
This poset ${\mathcal K}$ is said to be the {\em face poset} of $K$. Clearly, if 
$K$ and $L$ are two finite regular CW-complexes with isomorphic face posets then 
$K$ and $L$ are homeomorphic. Now, let $K$ be a regular CW-complex with partial ordering $\leq$
on its face poset $\mathcal{K}$. 
If $\beta\leq \alpha\in \mathcal{K}$ then we say $\beta$ is a {\em face} of $\alpha$. 
For $\alpha\in K$, the set 
$\partial \alpha := \{\gamma \in K \, : \, \alpha\neq \gamma \leq \alpha\}$ 
defines a subcomplex of $K$ with induced partial ordering and is called the {\em boundary} of $\alpha$. 
If all the maximal cells of a $d$-dimensional regular CW-complex $K$ are $d$-cells then $K$ is said 
to be {\em pure}. Maximal cells in a pure $CW$-complex $K$ are called 
{\em facets}, $0$-dimensional cells are called {\em vertices}, and $1$-dimensional
cells are called {\em edges} of $K$. More generally, the set of $i$-dimensional
faces of $K$ with its subfaces will be called the {\em $i$-skeleton} of $K$, denoted by $\operatorname{skel}_i (K)$. The vector $f(K) = (f_0 (K), f_1 (K), \ldots , f_d (K))$ will be called the {\em $f$-vector} of $K$ where $f_i(K)$ is the number of $i$ cells in $K$. The underlying topological space of $K$ is referred to as
the {\em geometric carrier} of $K$ which will be denoted by $| K |$.

\medskip
If all faces of a regular CW-complex $K$ are simplices then $K$
is often called a {\em generalized triangulation} (see for example 
\cite{Jaco03ZeroEffTriang}, where they are referred to as {\em triangulations}).
Generalized triangulations are predominantly used in $3$-manifold topology
and hyperbolic geometry where they are usually introduced as a set of tetrahedra 
together with face-pairings along their triangular faces. In particular, generalized
triangulations allow self-identifications of cells and often contain
no more than a single $0$-dimensional cell (hence, they are sometimes also called
{\em $1$-vertex triangulations}).

\medskip
Here, we want to focus on a slightly less general type of CW-complex.
A {\em simplicial cell complex} $K$ of dimension $d$ 
is a regular CW-complex such that the boundary of each face in $K$ is isomorphic 
(as a poset) to the boundary of a simplex of same dimension.
Note that every simplicial cell complex is a generalized triangulation
but the converse is not true. More precisely, in a simplicial cell complex no
self-identifications of faces are allowed. As a consequence, each simplicial
cell complex of dimension $d$ must have at least $d+1$ vertices.
If a $d$-dimensional simplicial cell complex $K$ has exactly $d+1$ vertices then 
$K$ is called {\em contracted}. 


If for a $d$-dimensional simplicial cell complex $K$ each $(d-1)$-face is contained in
exactly two facets of $K$, we say that $K$ is a {\em weak pseudomanifold}.
For $\alpha\in K$, the set $\{\sigma \in K \, : \, \alpha\leq \sigma\}$ is 
also a simplicial cell complex and is said to be the {\em star} of $\alpha$ in $K$, 
denoted by ${\rm st}_{K}(\alpha)$. Similarly, the set
$\{\sigma \setminus \alpha \in K \, : \, \alpha\leq \sigma\}$ is 
called the {\em link} of $\alpha$ in $K$, denoted by ${\rm lk}_{K}(\alpha)$. 
Here $\sigma \setminus \alpha$ denotes the set of all faces in $\sigma$
which are disjoint of $\alpha$. Furthermore, for any vertex $v$ of a $d$-dimensional 
simplicial cell complex $K$ the $(d-1)$-dimensional simplicial cell complex given by 
the boundary of a small neighborhood of $v$ in
$K$ (inside the interior of the subcomplex of all faces of $K$ containing $v$) is called the
{\em vertex figure} of $v$ in $K$ (note that in a simplicial complex $K$
a vertex figure of $v$ in $K$ is isomorphic to the link of $v$ in $K$). If all vertex figures
of $K$ are simplicial cell decompositions with their geometric carrier being PL-homeomorphic 
to the standard PL $(d-1)$-sphere then $K$ is said to be a {\em pseudotriangulation}
of a $d$-manifold. By construction, all pseudotriangulations of manifolds
are weak pseudomanifolds but the converse is not true.
Given a PL-manifold $\mathbb{M}$ we say that a pseudotriangulation $M$ is {\em (PL-)homeomorphic}
to $\mathbb{M}$ when $|M| \cong_{(PL)} \mathbb{M}$.

Now let $M$ be a $d$-dimensional weak pseudomanifold. Consider the graph 
$\Lambda(M)$ whose vertices are the facets of $M$ and the edges are pairs 
$(\{\sigma_1, \sigma_2\}, \gamma)$, where $\sigma_1$ and $\sigma_2$ are facets, and $\gamma$ 
is a common $(d-1)$-cell (i.e., $\gamma$ is a face of both $\sigma_1$ and $\sigma_2$). 
The graph $\Lambda(M)$ is said to be the {\em dual graph} or sometimes also the
{\em face pairing graph} of $M$. Observe that $\Lambda(M)$ of a weak pseudomanifold $M$ is 
a multi graph without loops. 

Pseudotriangulations of PL-manifolds are a straightforward generalization of 
combinatorial manifolds where the underlying CW-complex must be a simplicial complex 
(see \cite{Lutz11TrigMnflds,Spreer10Diss} for an introduction into combinatorial manifolds).
All together, we have three types of cell-decompositions, generalized triangulations,
simplicial cell complexes, and simplicial complexes which all are closely connected:
All simplicial cell complexes are generalized triangulations and the barycentric subdivision of
any generalized triangulation is a simplicial cell complex. All simplicial complexes in turn are simplicial
cell complexes and the barycentric subdivision of any simplicial cell complex is 
a simplicial complex \cite{Burton144Mflds}.

\medskip
An even richer set of classes of regular and simplicial CW-complexes of decreasing 
generality between simplicial cell complexes and simplicial complexes is given by the following.


\begin{defn}
	\label{defn:simple}
	Let $K$ be a $d$-dimensional simplicial cell complex. For $1 \leq k \leq d$, 
	$K$ is said to be {\em $k$-simple} if any set of $k+1$ vertices is in at most one $k$-cell.
\end{defn}

A $d$-dimensional simplicial cell complex $K$ is a simplicial complex if and only if $K$ is $d$-simple. 
If $K$ is contracted then $K$ is $k$-simple if any set of $k+1$ 
vertices is in a unique $k$-cell. If $K$ is $1$-simple we will call $K$ 
{\em simple}. From the definition we get the following.

\begin{lemma} \label{Lemma:simple-a}
	Let $M$ be a contracted $k$-simple pseudotriangulation of a closed connected $d$-manifold $\mathbb{M}$. 
	Then $k \leq d-1$, and $k=d-1$ if and only if 
	$M$ is a $2$-facet contracted pseudotriangulation of $S^{d}$.
\end{lemma}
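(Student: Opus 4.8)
The plan is to derive everything from two elementary facts about contracted complexes together with the weak pseudomanifold condition. Write $V$ for the vertex set of $M$, so $|V| = d+1$. The first fact, already recorded just before Definition~\ref{defn:simple}, is that every facet $\sigma$ of $M$ is a $d$-cell on \emph{all} of $V$, and, since $\partial\sigma$ is poset-isomorphic to $\partial\Delta^d$, for each $j \le d-1$ and each $(j{+}1)$-subset $S \subseteq V$ there is exactly one $j$-face of $\sigma$ with vertex set $S$. Hence $M$ is $j$-simple precisely when for any two facets these ``$S$-faces'' agree for every such $S$, and in that case $f_j(M) = \binom{d+1}{j+1}$. (Restricting agreeing $k$-faces to their lower-dimensional faces also shows that $k$-simplicity forces $j$-simplicity for all $j \le k$, so the value $k$ in the statement is genuinely the maximal simplicity degree of $M$.)

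First I would prove $k \le d-1$. If $M$ were $d$-simple it would be a simplicial complex, by the equivalence stated immediately before this lemma; being contracted and pure of dimension $d$ (it is a pseudotriangulation of a $d$-manifold), its unique $d$-cell would be the full simplex on $V$, which would then be the only facet, so every $(d-1)$-face would lie in a single facet --- contradicting the weak pseudomanifold property. Hence $k \le d-1$.

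Next I would establish the equivalence. If $M$ is $(d-1)$-simple, then $f_{d-1}(M) = \binom{d+1}{d} = d+1$, and counting incidences between facets and $(d-1)$-faces in two ways gives $(d+1)\,f_d(M) = 2\,(d+1)$ --- each facet has $d+1$ codimension-one faces and each such face lies in exactly two facets --- so $f_d(M) = 2$. Conversely, if $M$ has exactly two facets $\sigma_1$ and $\sigma_2$, then every codimension-one face of $\sigma_1$ lies in $\sigma_1$ and in the only other facet $\sigma_2$; since $\sigma_2$ carries a unique $(d-1)$-face on each $d$-subset of $V$, the $(d-1)$-faces of $\sigma_1$ and $\sigma_2$ are the very same $d+1$ faces of $M$, one per $d$-subset, which is exactly $(d-1)$-simplicity.

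Finally I would identify the topology of a $2$-facet contracted pseudotriangulation: the facets $\sigma_1, \sigma_2$ are $d$-simplices on the common vertex set $V$ glued along their whole boundaries by the vertex-wise identity (the codimension-one face of $\sigma_1$ missing $v_i$ and that of $\sigma_2$ missing $v_i$ are the same face of $M$), with no codimension-one face left free; therefore $|M|$ is the double of the PL $d$-ball $\Delta^d$ along $\partial\Delta^d$, which is PL-homeomorphic to $S^d$. The one point that requires genuine care is this last step --- checking that contractedness together with the two-facet and weak-pseudomanifold conditions \emph{forces} the boundary identification to be the vertex-wise identity with nothing left over --- but this is precisely what the incidence count in the previous paragraph supplies. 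Combining the two directions with $k \le d-1$ completes the proof.
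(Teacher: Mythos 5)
Your argument is correct. There is, however, nothing in the paper to compare it against: the authors state this lemma with only the remark ``From the definition we get the following'' and give no proof --- the proof environment that follows the two lemma statements belongs to Lemma~\ref{Lemma:simple-b}, not to this one. Your write-up therefore fills a genuine omission, and it does so cleanly: the key observations are that in a contracted pure complex every facet carries exactly one $j$-face over each $(j+1)$-subset of the $d+1$ vertices, that $d$-simplicity would force a single facet and hence violate the weak pseudomanifold condition (giving $k\leq d-1$), and that the incidence count $(d+1)\,f_d(M)=2\,f_{d-1}(M)$ converts $(d-1)$-simplicity into $f_d(M)=2$ and back. You also correctly isolate the only delicate point, namely that the two-facet, weak-pseudomanifold and contractedness hypotheses together force the two facets to be glued along their entire boundaries by the vertex-wise identity, so that $|M|$ is the double of $\Delta^d$ along $\partial\Delta^d$ and hence a PL $d$-sphere. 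The one loose end, which neither you nor the authors address and which is harmless for the paper's purposes, is the degenerate case $d=1$, where Definition~\ref{defn:simple} only admits $k\geq 1$ and the statement becomes vacuous.
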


\begin{lemma} \label{Lemma:simple-b}
	Let $M$ be a contracted pseudotriangulation of a closed connected $d$-manifold $\mathbb{M}$. 
	If $M$ is $k$-simple then $|M|$ is $k$-connected.
\end{lemma}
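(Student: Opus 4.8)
The plan is to use the standard fact that the $k$-connectivity of a CW-complex is controlled by its $k$-skeleton, together with the hypothesis that being $k$-simple forces the low-dimensional skeleton of $M$ to coincide with that of a single $d$-simplex (equivalently, a simplex on the vertex set of $M$). Recall from the remarks after Definition~\ref{defn:simple} that, since $M$ is contracted, $k$-simple means every set of $k+1$ vertices spans a unique $k$-cell of $M$. I would first argue by induction that the same holds for every $j \leq k$: if every set of $k+1$ vertices lies in a unique $k$-cell, then, inspecting the boundary of such a $k$-cell (which is isomorphic as a poset to the boundary of a $k$-simplex), every set of $j+1$ vertices with $j < k$ lies in at least one $j$-cell; uniqueness for $j$-cells then follows from uniqueness for $k$-cells, because two distinct $j$-cells on the same $j+1$ vertices would each be contained in some $k$-cell, and one then produces two distinct $k$-cells on a common set of $k+1$ vertices. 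Hence $\operatorname{skel}_k(M)$ is isomorphic to the $k$-skeleton of the $(d)$-simplex $\Delta$ on the $d+1$ vertices of $M$.

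The $k$-skeleton of a simplex is itself a simplicial complex; concretely $\operatorname{skel}_k(\Delta^d)$ is the $k$-skeleton of the standard $d$-simplex, which is well known to be $(k-1)$-connected, and in fact homotopy equivalent to a wedge of $k$-spheres, so it is even $k$-connected when $k \leq d-1$ (which holds here by Lemma~\ref{Lemma:simple-a}). I would cite this as a standard fact (for instance it follows from the Euler characteristic / homology computation for $\operatorname{skel}_k(\Delta^n)$, or from the fact that a simplex is collapsible and the obstruction to extending this collapse to the $k$-skeleton lives in top degree). Thus $\operatorname{skel}_k(M)$ is $k$-connected.

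It remains to transfer $k$-connectivity from the $k$-skeleton to $|M|$ itself. This is the heart of the argument and the step I expect to be the main obstacle, since $M$ is only a simplicial cell complex, not a simplicial complex; but cellular approximation still applies to regular CW-complexes. Given a map $f\colon S^i \to |M|$ with $i \leq k$, cellular approximation lets me homotope $f$ into $\operatorname{skel}_i(M) \subseteq \operatorname{skel}_k(M)$; since the latter is $k$-connected (hence $i$-connected), $f$ is nullhomotopic \emph{within} $\operatorname{skel}_k(M)$, and a fortiori within $|M|$. Taking $i=0$ gives path-connectedness, and taking $1 \leq i \leq k$ gives the vanishing of $\pi_i(|M|)$. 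Therefore $|M|$ is $k$-connected, as claimed. The only delicate point to check carefully is that cellular approximation is valid for the regular CW-structure underlying a simplicial cell complex — which it is, since a simplicial cell complex is in particular a CW-complex with the usual skeletal filtration — so no extra hypotheses are needed. $\qed$
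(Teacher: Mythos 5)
Your reduction of the problem to the $k$-skeleton is sound: the argument that $k$-simplicity together with contractedness forces every set of $j+1 \leq k+1$ vertices to span a unique $j$-cell, so that $\operatorname{skel}_k(M)$ is isomorphic to $\operatorname{skel}_k(\Delta^d)$, is correct, and cellular approximation does apply to the underlying regular CW-structure. The gap is in your connectivity claim for that skeleton. For $k \leq d-1$ the complex $\operatorname{skel}_k(\Delta^d)$ is homotopy equivalent to a wedge of $\binom{d}{k+1} \geq 1$ copies of $S^k$; it is therefore exactly $(k-1)$-connected and \emph{not} $k$-connected (it has $H_k \neq 0$). Your assertion that it ``is even $k$-connected when $k \leq d-1$'' is precisely backwards: it would be $k$-connected only for $k=d$, the case excluded by Lemma~\ref{Lemma:simple-a}. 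Consequently your argument proves $\pi_i(|M|)=0$ for $i<k$ but breaks down at $i=k$, where a map $S^k \to \operatorname{skel}_k(M)$ need not be nullhomotopic \emph{within} the $k$-skeleton.

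The missing idea --- and the one the paper's proof turns on --- is that $\operatorname{skel}_k(M) = \operatorname{skel}_k(\sigma)$ is contained in the closed facet $|\sigma|$ for any single $d$-cell $\sigma$ of $M$, and $|\sigma|$ is a contractible subspace of $|M|$. So after cellular approximation pushes $f \colon S^i \to |M|$, $i \leq k$, into $\operatorname{skel}_k(M)$, the nullhomotopy should be carried out inside $|\sigma| \subseteq |M|$ rather than inside the skeleton; equivalently, the surjection $\pi_i(\operatorname{skel}_k(M)) \to \pi_i(|M|)$ factors through $\pi_i(|\sigma|) = \{0\}$. With that one change your proof closes; as written, it only establishes $(k-1)$-connectivity of $|M|$.
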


\begin{proof}
	For $k=0$ the statement directly follows since every contracted pseudotriangulation is connected.

	Let $k > 0$. $M$ is $d$-dimensional, $k$-simple ($k \leq d-1$), and contracted. Hence the full $k$-skeleton
	$\operatorname{skel}_k(M)$ of $M$ is contained in every $d$-simplex $\sigma \in M$.
	Thus $\operatorname{skel}_k(M) = \operatorname{skel}_k(\sigma) \subseteq \sigma$ 
	but on the other hand $\{ \sigma \} \subset M$. 
	Hence, we have $\pi_1(M, x) \leq \pi_1 (|\sigma|, x) =\{0\}$ for $x \in |\sigma|$ and 
	$H_i(M) \leq H_i (|\sigma|) = \{0\}$ for $1 \leq i \leq k$.
	It follows that $|M|$ is $k$-connected.
\end{proof}

\subsection{Colored Graphs}
\label{ssec:colGraphs}

In the following we will use the standard terminology for graphs as introduced
in \cite{Bondy08GraphTheory}.

\medskip
All graphs considered in this article are finite multi graphs without loops. 
Let $\Gamma = (V, E)$ be a graph and $U \subseteq V$ a subset of its vertices.
Then the {\em induced subgraph $\Gamma[U]$} is the subgraph of $\Gamma$
with vertex set $U$ containing all edges of $\Gamma$ with both endpoints 
lying in $U$. For $n\geq 2$, an $n$-cycle is a closed path with $n$ distinct 
vertices and $n$ edges. If vertices $a_i$ and $a_{i+1}$ are adjacent in an 
$n$-cycle for $1\leq i\leq n$ (addition is modulo $n$) then the $n$-cycle is 
denoted by $C_n(a_1, a_2, \dots, a_n)$. A graph $\Gamma$ is called 
{\em $h$-regular} or {\em $h$-valent} if the number of edges adjacent to 
each vertex is $h$. 

An {\em edge coloring} of  a graph $\Gamma = (V, E)$ is a surjective map $\gamma \colon E \to C$
such that $\gamma(e) \neq \gamma(f)$ whenever $e$ and $f$ are adjacent 
(i.e., $e$ and $f$ share a common vertex). The elements of the set 
$C$ are called the {\em colors}. If $C$ has $h$ elements then $(\Gamma, \gamma)$ 
is said to be an {\em $h$-colored graph}.

Let $(\Gamma,\gamma)$ be an $h$-colored graph with color set $C$. If $B \subseteq C$ 
with $k$ elements then the graph $(V(\Gamma), \gamma^{-1}(B))$ is a $k$-colored graph 
with coloring $\gamma|_{\gamma^{-1}(B)}$. This colored graph is denoted by $\Gamma_B$. 
Let $(\Gamma,\gamma)$ be an $h$-colored connected graph with color set $C$. If
$\Gamma_{C\setminus\{c\}}$ is connected for all $c\in C$ then  $(\Gamma,\gamma)$ 
is called {\em contracted}.

Let $\Gamma_1 = (V_1, E_1)$ and $\Gamma_2=(V_2, E_2)$ be two disjoint $h$-regular 
$h$-colored graphs with same color set $\{1, \dots, h\}$. Furthermore,
let $v_i \in V_i$ and let $u_{j,i} \in V_i$ be their neighbors in $\Gamma_i$ such that
the edge going from $v_i$ to $u_{j,i}$ is colored with color $j$, 
$1 \leq i \leq 2$, $1 \leq j \leq h$. Consider the graph $\Gamma$ obtained from 
$(\Gamma_1 \setminus\{v_1\}) \sqcup (\Gamma_2 \setminus \{v_2\})$ 
(here $\Gamma_i \setminus\{v_i\} = \Gamma_i[V_i\setminus\{v_i\}]$)
by adding $h$ new edges $e_j$ with colors $j$ respectively, $1 \leq j \leq h$, 
such that the end points of edge $e_j$ are $u_{j,1}$ and $u_{j,2}$.
The colored graph $\Gamma$ is called the {\em connected sum} of $\Gamma_1$ and $\Gamma_2$ and 
is denoted by $\Gamma_1\#_{v_1v_2}\Gamma_2$. Note that permuting the colors of $\Gamma_1$
gives rise to $h !$ ways to perform the connected sum with $\Gamma_2$ along 
$v_1$ and $v_2$.

\subsection{Crystallizations}
\label{cryst}

Crystallizations are colored graphs defining contracted pseudotriangulations. Hence,
they provide a way to visualize the essential properties of high dimensional manifolds
in a low-dimensional setting.

\medskip
Let $(\Gamma, \gamma)$ be a $(d+1)$-colored graph with color set 
$C = \{0, \dots,  d\}$, $d \geq 1$. Then a $d$-dimensional simplicial cell complex $M(\Gamma)$ 
can be defined as follows. For each $v\in V(\Gamma)$ we take a $d$-simplex $\sigma_v$ and label 
its vertices by $0, \dots, d$. If $u, v \in V(\Gamma)$ are joined by an edge $e$ and $\gamma(e) =  {i}$, 
then we identify the $(d-1)$-faces of $\sigma_u$ and $\sigma_v$ opposite to vertex ${i}$, such that
equally labeled vertices are identified. Since there is no identification within a $d$-simplex, 
$M(\Gamma)$ is a simplicial cell complex. We say that $(\Gamma, \gamma)$ 
{\em represents} the simplicial cell complex $M(\Gamma)$. 
Since, in addition, the number of $i$-labeled vertices of $M(\Gamma)$ is equal to the number of components of 
$\Gamma_{C\setminus\{{i}\}}$ for each $ {i}\in C$, the simplicial cell complex $M(\Gamma)$ is 
contracted if and only if $\Gamma$ is contracted \cite{Ferri86GraphTheoryCrystallizations}.

Hence, for a manifold $\mathbb{M}$ we will call a $(d+1)$-colored contracted graph 
$(\Gamma, \gamma)$ a {\em crystallization} of $\mathbb{M}$ if the simplicial cell complex $M(\Gamma)$ 
is a pseudotriangulation of $\mathbb{M}$.
Furthermore, the crystallization $(\Gamma, \gamma)$ of some closed $d$-manifold $\mathbb{M}$ either has two vertices 
(connected by $d+1$ edges, in which case $\mathbb{M}$ is $S^d$) or the number of edges between 
two vertices is at most $d-1$. We will call $(\Gamma, \gamma)$ $k$-simple
if $M(\Gamma)$ is $k$-simple. In \cite{Pezzana74Crystallizations}, Pezzana showed the following.

\begin{prop}[Pezzana] 
	\label{prop:pezzana74}
	Every connected closed PL-manifold admits a crystallization.
\end{prop}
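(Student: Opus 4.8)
\textbf{Proof proposal for Proposition~\ref{prop:pezzana74} (Pezzana).}

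\medskip
The plan is to exploit the fact that any PL-manifold can be triangulated by a combinatorial manifold, and then to progressively collapse the vertex set of that triangulation while preserving the dual-graph description. First I would start from an arbitrary combinatorial triangulation $T$ of $\mathbb{M}$, which exists because $\mathbb{M}$ is PL. The dual graph $\Lambda(T)$ is naturally $(d+1)$-regular and $(d+1)$-edge-colored: each facet $\sigma$ of $T$ has its $d+1$ vertices labeled $0,\dots,d$ in some fixed global fashion (this requires choosing a ``balanced'' labeling, i.e., a proper $(d+1)$-coloring of the vertices of $T$; if $T$ does not already admit one, pass to the barycentric subdivision, which is always properly $(d+1)$-colorable via dimension of the original face). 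Then the edge of $\Lambda(T)$ dual to a $(d-1)$-face $\gamma$ shared by facets $\sigma_1,\sigma_2$ gets the color $i$, where $i$ is the label of the unique vertex missing from $\gamma$. Because the coloring of vertices is proper, two edges of $\Lambda(T)$ meeting at a common facet receive distinct colors, so $(\Lambda(T),\gamma)$ is a legitimate $(d+1)$-colored graph representing the (simplicial) pseudotriangulation $T = M(\Lambda(T))$.

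\medskip
The second and main step is to make this colored graph \emph{contracted}, i.e., to arrange that $\Lambda(T)_{C\setminus\{i\}}$ is connected for every color $i$, which by the correspondence recalled in Section~\ref{cryst} is equivalent to $M(\Lambda(T))$ having exactly one vertex of each label, hence exactly $d+1$ vertices in total. The tool here is a local modification that identifies two vertices of $T$ carrying the same label without changing the PL-homeomorphism type of $|T|$ --- concretely, if $v$ and $w$ are two vertices both labeled $i$ and joined by an edge $e$, one can ``contract'' $e$ (or perform the dual move on $\Lambda(T)$, which amounts to a suitable sequence of Pachner-type bistellar moves together with an edge contraction), reducing the number of $i$-labeled vertices by one while keeping the complex a pseudotriangulation of $\mathbb{M}$. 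Since $\mathbb{M}$ is connected, for any color $i$ with more than one $i$-labeled vertex there is always a path in the $1$-skeleton of $T$ between two such vertices, and along such a path some edge joins two vertices of equal label; applying the contraction to that edge strictly decreases the count. Iterating over all colors, the process terminates with a pseudotriangulation having exactly $d+1$ vertices --- a contracted pseudotriangulation --- whose dual colored graph is therefore contracted by construction, i.e., a crystallization of $\mathbb{M}$.

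\medskip
The hard part, and the step I expect to require the most care, is the vertex-identification / edge-contraction move itself: one must verify that after contracting an edge joining two equally labeled vertices the result is still a \emph{simplicial cell complex} (no illegal self-identifications inside a single $d$-simplex are created) and still a \emph{pseudotriangulation of the same PL-manifold} (all vertex figures remain PL-spheres and the PL-homeomorphism type of the carrier is unchanged). The standard way around the potential failure of the contracted complex to be simplicial is to phrase the move entirely on the dual graph and realize it as a composition of elementary operations that are transparently PL-invariant --- in effect, the move is the ``graph-theoretic'' analogue of coning off a disc, and one checks PL-invariance by inspecting that it corresponds to a sequence of bistellar flips (Pachner moves) followed by a single contraction that is known to preserve the carrier. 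If at some stage an edge with distinct-label endpoints is all that is available, one first performs a barycentric-type subdivision localized near the offending region to create an edge with matching labels, which costs nothing topologically. Everything else --- the fact that $\Lambda(T)$ is $(d+1)$-regular, the bijection between $i$-labeled vertex count and components of $\Lambda(T)_{C\setminus\{i\}}$, and termination of the iteration --- is bookkeeping that follows from the definitions recalled above.
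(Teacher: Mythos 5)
The paper does not actually prove Proposition~\ref{prop:pezzana74}: it is stated as a classical result and attributed to \cite{Pezzana74Crystallizations}, so your attempt can only be measured against the known argument. Your first step matches that argument and is correct: triangulate $\mathbb{M}$ combinatorially, pass to the barycentric subdivision to obtain a proper $(d+1)$-coloring of the vertices (by dimension of the barycentre), and color each edge of the dual graph by the label of the vertex missing from the corresponding $(d-1)$-face. This yields a $(d+1)$-regular $(d+1)$-colored graph representing $\mathbb{M}$, in general not contracted.

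The reduction step, however, contains a genuine error. You propose to find, for a color $i$ with more than one $i$-labeled vertex, an edge of $T$ joining two $i$-labeled vertices and to contract it, arguing that such an edge must occur along a path in the $1$-skeleton between two $i$-labeled vertices. But the vertex coloring you just arranged is \emph{proper}: no edge of $T$ has both endpoints of the same label, so the edge you want to contract never exists, and no path can supply one, since consecutive vertices on any path carry distinct labels. The correct mechanism --- the heart of Pezzana's proof --- lives on the colored graph, not on the $1$-skeleton: since $\Gamma$ is connected, if $\Gamma_{C\setminus\{i\}}$ has more than one component then some $i$-colored edge of $\Gamma$ has its endpoints in two different components of $\Gamma_{C\setminus\{i\}}$; such an edge is a $1$-dipole, and cancelling it (deleting its two graph vertices and rejoining the dangling edges of matching colors) preserves the represented PL-manifold while decreasing $g_{C\setminus\{i\}}$, i.e.\ the number of $i$-labeled vertices of the complex, by one, without increasing $g_{C\setminus\{j\}}$ for $j\neq i$. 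Iterating yields a contracted graph, hence a crystallization. Note also that an edge contraction in the sense of Proposition~\ref{prop:edgeContr} identifies two \emph{differently} labeled vertices of the pseudotriangulation and so cannot substitute for the dipole move. Without that move your induction has no valid step, and the proof does not go through as written.
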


Note that the analogous statement about simply connected manifolds and simple crystallizations 
would imply the Smooth Poincar\'e conjecture (cf. Section~\ref{ssec:s4}). 
However, since there exist simply connected topological 
$4$-manifolds with finite dimensional homology which admit an infinite number of PL structures this can not be
true in general.

\medskip
Crystallizations of manifolds admit a number of very useful combinatorial criteria which translate into
topological properties of the manifolds they describe. In the following we will list some
of these criteria.

\begin{prop}[Cavicchioli-Grasselli-Pezzana \cite{Cavicchioli80NormDecClsdNMflds}] \label{prop:ca-ga-pe80}
Let $(\Gamma,\gamma)$ be a crystallization of a $d$-manifold $\mathbb{M}$. Then $\mathbb{M}$ is orientable if and only if 
$\Gamma$ is bipartite.
\end{prop}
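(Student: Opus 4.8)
The plan is to translate orientability of $\mathbb{M}$ into the existence of a \emph{coherent orientation} of the facets of the pseudotriangulation $M := M(\Gamma)$, and then to observe that such a coherent orientation is precisely a proper $2$-coloring of the vertices of $\Gamma$.

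First I would recall that, since $M$ is a pseudotriangulation of the closed connected manifold $\mathbb{M}$, we have $|M| \cong \mathbb{M}$ and the simplicial chain complex of the underlying $\Delta$-complex computes $H_\ast(\mathbb{M};\mathbb{Z})$; in particular $\mathbb{M}$ is orientable if and only if $H_d(M;\mathbb{Z}) \neq 0$. Write $\{\sigma_v : v \in V(\Gamma)\}$ for the set of $d$-simplices of $M$, each oriented by the vertex labeling $0 < 1 < \cdots < d$, so that a $d$-chain is a formal sum $z = \sum_{v} \epsilon_v\,\sigma_v$ with $\epsilon_v \in \mathbb{Z}$. Because $M$ is a weak pseudomanifold and $\Lambda(M)$ has no loops, every $(d-1)$-face $\gamma$ of $M$ is the image of exactly one pair of $(d-1)$-faces, one in $\sigma_u$ and one in $\sigma_v$, where $uv$ is an edge of $\Gamma$. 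Hence $z$ is a cycle if and only if, for each edge $uv \in E(\Gamma)$, the contributions of $\epsilon_u\sigma_u$ and $\epsilon_v\sigma_v$ to $\partial z$ along their shared $(d-1)$-face cancel; and $\mathbb{M}$ is orientable if and only if there is a choice of signs $\epsilon_v \in \{+1,-1\}$ making $\sum_v \epsilon_v \sigma_v$ such a cycle.

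Next I would carry out the local sign computation. If $uv \in E(\Gamma)$ has color $i$, then $\sigma_u$ and $\sigma_v$ are glued along the $(d-1)$-faces opposite the vertices labeled $i$, matching equally labeled vertices. In $\partial\sigma_v$ the shared face appears with coefficient $(-1)^i$ relative to the induced ordering on the labels $\{0,\dots,d\}\setminus\{i\}$, and under the identification the same face appears in $\partial\sigma_u$ with coefficient $(-1)^i$ as well. Therefore the two contributions to $\partial(\epsilon_u\sigma_u + \epsilon_v\sigma_v)$ along this face cancel exactly when $\epsilon_u = -\epsilon_v$; note that this condition does not depend on the color $i$, so multiple edges between $u$ and $v$ impose no additional constraint.

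Finally I would assemble the pieces: a sign assignment $v \mapsto \epsilon_v \in \{+1,-1\}$ yields a fundamental cycle if and only if $\epsilon_u = -\epsilon_v$ for every edge $uv \in E(\Gamma)$, i.e.\ if and only if $v \mapsto \epsilon_v$ is a proper $2$-coloring of the vertices of the connected graph $\Gamma$. Such a coloring exists if and only if $\Gamma$ has no cycle of odd length, that is, if and only if $\Gamma$ is bipartite. Combined with the first step, $\mathbb{M}$ is orientable if and only if $\Gamma$ is bipartite. The one delicate point is the bookkeeping of induced boundary orientations in the local computation, which must be done carefully enough to confirm that the cancellation condition $\epsilon_u = -\epsilon_v$ is the same for every color.
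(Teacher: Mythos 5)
The paper does not prove this proposition; it is imported verbatim from Cavicchioli--Grasselli--Pezzana \cite{Cavicchioli80NormDecClsdNMflds}, so there is no in-paper argument to compare against. Your proof is correct and is essentially the classical one: identifying orientability of the closed connected manifold with the existence of a fundamental cycle $\sum_v \epsilon_v \sigma_v$, and observing that because each gluing matches equally labeled vertices, the shared $(d-1)$-face occurs with the \emph{same} sign $(-1)^i$ in $\partial\sigma_u$ and $\partial\sigma_v$, so cancellation forces $\epsilon_u=-\epsilon_v$ independently of the color; combined with connectedness of $\Gamma$ (which follows since crystallizations are contracted) this is exactly a proper $2$-coloring, i.e.\ bipartiteness. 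The two points you flag as delicate --- the sign bookkeeping and the reduction of orientability to a $\pm 1$ coefficient cycle using the weak pseudomanifold property --- are handled correctly. Note that the alternative argument sketched in Section~\ref{sec:simpCryst} of the paper (via $3$-sphere subgraphs) only concerns the special case of simple crystallizations and presupposes this proposition in dimension three, so it is not a substitute for your direct chain-level argument.
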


Let $(\Gamma, \gamma)$ be a $(d+1)$-colored graph with color set $C = \{0, \dots, d\}$.
For any  $k$-color set $D = \{i_1,i_2, \dots, i_k\} \subset C$, the number of components 
of the sub graph $\Gamma_{D}$ 
will be denoted by $g_{D}$ or sometimes just by $g_{i_1 i_2 \cdots i_k}$. 
With this setup in mind we can state

\begin{prop}[Gagliardi \cite{Gagliardo79CombCritCrystallizations}] \label{prop:gagliardi79a}
Let $(\Gamma,\gamma)$ be a contracted $4$-colored graph with $n$ vertices and color set $\{0,1,2,3\}$. 
Then $(\Gamma,\gamma)$ is a crystallization of a connected closed $3$-manifold if and only if

\begin{enumerate}[{\rm (i)}]
\item $g_{ij}=g_{kl}$ for $\{i,j,k,l\} =\{0,1,2,3\}$, and
\item $g_{01}+g_{02}+g_{03}=2+n/2$.
\end{enumerate}
\end{prop}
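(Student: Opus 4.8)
The plan is to establish the two conditions (i) and (ii) as necessary, and then show they are sufficient by exhibiting that a $4$-colored contracted graph satisfying them yields a simplicial cell complex whose vertex figures are all $2$-spheres. For necessity, I would first recall that for a contracted $4$-colored graph $(\Gamma,\gamma)$ representing the $3$-dimensional simplicial cell complex $M(\Gamma)$, the number of $h$-labeled vertices of $M(\Gamma)$ equals $g_{C\setminus\{h\}}$, the number of components of the $3$-colored subgraph obtained by deleting color $h$. Since the link of the $\{i,j\}$-labeled edge (equivalently, the $\{k,l\}$-colored cycles) must be counted correctly, one observes that the $\{i,j\}$-colored subgraph $\Gamma_{\{i,j\}}$ is a disjoint union of cycles, and each such cycle corresponds to an edge of $M(\Gamma)$ labeled by the complementary pair of colors $\{k,l\}$; dually, the $\{k,l\}$-colored cycles correspond to edges labeled $\{i,j\}$. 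The condition $g_{ij}=g_{kl}$ then expresses that these two counts of edges of $M(\Gamma)$ agree — which is forced by the fact that if $M(\Gamma)$ is a pseudotriangulation of a closed $3$-manifold, then each of its edges carries a well-defined unordered pair of vertex labels, and the number of edges with label pair $\{i,j\}$ equals the number with the complementary label pair by a symmetric double-counting through the triangles. Condition (ii) is then the Euler characteristic constraint: a closed $3$-manifold has $\chi = 0$, and expressing $f_0, f_1, f_2, f_3$ of $M(\Gamma)$ in terms of the $g_D$'s (with $f_3 = n$, $f_2 = 2n/ \text{(per-triangle count)}$, etc.) and substituting into $f_0 - f_1 + f_2 - f_3 = 0$ collapses, after using (i), to $g_{01}+g_{02}+g_{03} = 2 + n/2$.

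For sufficiency, I would argue as follows. Given a contracted $4$-colored graph satisfying (i) and (ii), one already knows $M(\Gamma)$ is a contracted simplicial cell complex, and because each $2$-face of a tetrahedron is shared by exactly two tetrahedra (edges of $\Gamma$ come in at most one per color per vertex, and $\Gamma$ is $4$-regular), $M(\Gamma)$ is automatically a weak pseudomanifold. It remains to verify that every vertex figure is a $2$-sphere. The key observation is that the vertex figure of the $h$-labeled vertex of $M(\Gamma)$ is itself represented by a $3$-colored graph, namely a suitable component of $\Gamma_{C\setminus\{h\}}$ together with the restricted coloring, so it is a $2$-dimensional contracted simplicial cell complex and hence a closed surface (every edge lies in two triangles). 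Its Euler characteristic can be computed from the $g_D$'s restricted to that component; conditions (i) and (ii) are exactly what is needed to force each such surface to have Euler characteristic $2$, hence be $S^2$, so $M(\Gamma)$ is a pseudotriangulation of a closed $3$-manifold, and being contracted and $4$-colored it is by definition a crystallization. Orientability plays no role here since we only need the pseudotriangulation condition.

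I expect the main obstacle to be the bookkeeping in the sufficiency direction: one must show that conditions (i) and (ii) do not merely force the \emph{total} Euler characteristic of $M(\Gamma)$ to vanish, but in fact force \emph{each individual} vertex-link surface to be a sphere rather than allowing, say, a torus link to be compensated by a higher-genus link elsewhere. The resolution is that the $3$-colored subgraph $\Gamma_{C\setminus\{h\}}$ is connected (since $\Gamma$ is contracted), so there is only one $h$-labeled vertex, and the genus of its link is pinned down by a single equation rather than a sum; one then runs this argument for each of the four colors and checks consistency. Handling the degenerate $2$-vertex case (where $M(\Gamma) = S^3$) separately is routine and can be noted in passing.
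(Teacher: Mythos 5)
The paper does not prove this statement: it is quoted as Gagliardi's 1979 criterion and used as a black box, so there is no in-paper argument to compare against. Judged on its own merits, your proposal has the right skeleton for the sufficiency direction --- each vertex figure of $M(\Gamma)$ is the closed connected surface represented by the connected residue $\Gamma_{C\setminus\{h\}}$, whose Euler characteristic is $g_{jk}+g_{jl}+g_{kl}-n/2$ with $\{j,k,l\}=C\setminus\{h\}$, so that (i) and (ii) force $\chi=2$ for each of the four links --- and this is indeed how the result is proved.

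The genuine gap is in your necessity argument for (i). You claim $g_{ij}=g_{kl}$ is ``forced \dots by a symmetric double-counting through the triangles.'' No such double-counting exists: an edge of $M(\Gamma)$ with vertex labels $\{i,j\}$ and one with labels $\{k,l\}$ never lie in a common triangle (a triangle carries only three of the four labels, while $\{i,j\}\cup\{k,l\}=C$), so triangles cannot mediate between the two counts; and counting triangle--edge incidences only yields $\sum_e \ell(e)/2=n/2$, where $\ell(e)$ is the length of the bicolored cycle corresponding to $e$, which constrains the total length of those cycles, not their number. Condition (i) is not a global counting identity at all --- in particular it cannot be extracted from $\chi(M)=0$, which only gives the symmetrized relation $\sum_{\{i,j\}}g_{ij}=n+4$ --- but is exactly the information contained in the pairwise differences of the four individual sphere-link equations $g_{jk}+g_{jl}+g_{kl}=2+n/2$, one for each omitted color. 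So the correct proof of necessity is the same vertex-link computation you already set up for sufficiency, run in reverse; as written, your derivation of (ii) from $\chi(M)=0$ also circularly ``uses (i)'' before (i) has been established. Restating both directions as the single equivalence ``(i) and (ii) hold if and only if all four residues $\Gamma_{C\setminus\{h\}}$ represent closed connected surfaces of Euler characteristic $2$'' (a two-line linear-algebra check on the four equations) closes the gap.
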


Let  $(\Gamma, \gamma)$ be a crystallization (with color set $C$) of a connected closed $d$-manifold $\mathbb{M}$.
Choose two colors $i,j \in C$, let $\{G_1, \dots,
G_{s+1}\}$ be the set of all connected components of $\Gamma_{C\setminus \{i,j\}}$, and $\{H_1, \dots, H_{t+1}\}$
be the set of all connected components of $\Gamma_{\{i, j\}}$. Since $\Gamma$ is regular, $H_k$ is an even
cycle for  $1 \leq k \leq t+1$ (note that in this case each $H_k$ is regular of degree two and $2$-colorable).
Note that, if $d=2$, then $\Gamma_{\{i, j\}}$ is connected and hence $H_1= \Gamma_{\{i, j\}}$. Take a set
$S = \{x_1, \dots, x_s, x_{s+1}\}$ of $s+1$ elements such that $x_m \in G_m$, $1 \leq m \leq s+1$. 
Choose a vertex $v_1$ in $H_k$ and let
$$H_k = C_{2l} (v_1 , v_2 , \ldots , v_{2l}),$$
where without loss of generality the edge between $v_1$ and $v_2$ has color $i$ and the edge between $v_2$ and $v_3$ has
color $j$. Define
\begin{align} \label{tildar}
\tilde{r}_k := x_{k_2}^{+1} x_{k_3}^{-1}x_{k_4}^{+1}  \cdots
x_{k_{2l}}^{+1}x_{k_1}^{-1}, \mbox{ for }1\leq k\leq t+1,
\end{align}
where $G_{k_h}$ is the component of $\Gamma_{C\setminus \{i,j\}}$ containing $v_h$.  For $1\leq k\leq t+1$, let
$r_k$ be the word obtained from $\tilde{r}_k$ by deleting $x_{s+1}^{\pm 1}$'s in $\tilde{r}_k$. 
Then we have

\begin{prop}[Gagliardi \cite{Gagliardi79FundGrpClsdNMfld}] \label{prop:gagliardi79b}
For $d\geq 2$, let  $(\Gamma, \gamma)$ be a crystallization of a connected closed $d$-manifold $\mathbb{M}$. For two
colors $i, j$, let $s$, $t$, $x_p$, $r_q$ be as above. If $\pi_1(\mathbb{M}, x)$ is the fundamental group of $\mathbb{M}$ at a
point $x$, then
$$
\pi_1(\mathbb{M}, x) \cong \left\{ \begin{array}{lcl}
\langle {x_1, x_2,\dots, x_s} ~ | ~ {r_1} \rangle & \mbox{if}
& d=2,   \\
\langle {x_1, x_2, \dots, x_s} ~ | ~ {r_1, \dots, r_t} \rangle
& \mbox{if} & d\geq 3.
\end{array}\right.
$$
\end{prop}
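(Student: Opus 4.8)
The plan is to compute $\pi_1(\mathbb{M})=\pi_1(|M(\Gamma)|)$ from a two--dimensional CW-complex canonically attached to $\Gamma$, and then to reduce that complex, by a sequence of elementary collapses and Tietze transformations, to the presentation complex of the group displayed in the statement. For Step~1, write $M=M(\Gamma)$. Since $\mathbb{M}$ is a closed PL $d$-manifold, the link of every face of $M$ is a PL-sphere of the appropriate dimension, so $|M|$ carries a dual block decomposition: a CW-structure in which the cell dual to a $k$-face has dimension $d-k$. Let $P$ be its $2$-skeleton. By construction the $1$-skeleton of $P$ is the graph $\Gamma$ itself (its $0$-cells are the vertices, its $1$-cells the edges of $\Gamma$), and the $2$-cells of $P$ are indexed, for each two-element colour set $\{p,q\}\subset C$, by the connected components of $\Gamma_{\{p,q\}}$: such a component is an even cycle, namely the ``link circle'' along which the $2$-cell dual to the corresponding $(d-2)$-face (the one with label set $C\setminus\{p,q\}$) is attached. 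As attaching cells of dimension $\ge 3$ does not change the fundamental group, $\pi_1(\mathbb{M})\cong\pi_1(P)$ (when $d=2$ one has $P=|M|$ outright).

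For Step~2 (the generators), fix the colours $i,j$ and recall $\Gamma_{C\setminus\{i,j\}}=G_1\sqcup\dots\sqcup G_{s+1}$. Collapsing, inside $\Gamma\subset P$, a spanning tree of each $G_m$ is a homotopy equivalence that contracts $G_m$ to a point $y_m$ --- geometrically this reflects that the closed star in $M$ of the $\{i,j\}$-labelled edge $E_m$ dual to $G_m$ is the join $E_m\ast\lk{M}{E_m}$, hence a $d$-ball. Because $\Gamma$ is contracted, $\Gamma_{C\setminus\{j\}}$ is connected, so the images of the colour-$i$ edges span a connected graph on $y_1,\dots,y_{s+1}$, and collapsing one of its spanning trees leaves a $2$-complex with a single $0$-cell. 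The $2$-cells coming from the cycles of $\Gamma_{\{i,c\}}$, $\Gamma_{\{j,c\}}$ and --- when $d\ge 4$ --- $\Gamma_{\{c,c'\}}$ with $c,c'\notin\{i,j\}$ now become relators that, by Tietze transformations, remove all remaining generators except $s$ of them, the survivors being canonically indexed by $G_1,\dots,G_s$. Declaring $G_{s+1}$ the base component makes its would-be generator $x_{s+1}$ trivial; this is exactly why one deletes the letters $x_{s+1}^{\pm1}$ in passing from $\tilde{r}_k$ to $r_k$.

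For Step~3 (the relators, and the dichotomy in $d$), the only $2$-cells now left are those attached along the cycles $H_1,\dots,H_{t+1}$ of $\Gamma_{\{i,j\}}$. Reading a cycle $H_k=C_{2l}(v_1,\dots,v_{2l})$, whose edges alternate between the colours $i$ and $j$, starting at $v_2$ and recording for each traversed vertex $v_h$ the generator $x_{k_h}$ of the component $G_{k_h}\ni v_h$, with exponent $+1$ if $v_h$ was reached along an $i$-edge and $-1$ if along a $j$-edge, produces verbatim the word $\tilde{r}_k$ of \eqref{tildar}, hence --- after the substitution $x_{s+1}\mapsto 1$ --- the relator $r_k$. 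If $d=2$ then $\Gamma_{\{i,j\}}$ is connected, so $t=0$; here removing an open ball from $\mathbb{M}$ changes $\pi_1$ (a surface-with-boundary has free fundamental group), so no relator may be discarded and one keeps $\langle x_1,\dots,x_s\mid r_1\rangle$. If $d\ge 3$, then $\pi_1$ is unchanged by deleting an open $d$-ball, and deleting the open star of $E_{s+1}$ removes exactly the combinatorial data responsible for one of the $t+1$ relators; hence one of them --- say $\tilde{r}_{t+1}$ --- is a product of conjugates of the others, and $\langle x_1,\dots,x_s\mid r_1,\dots,r_t\rangle$ presents $\pi_1(\mathbb{M})$.

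I expect the main obstacle to be the bookkeeping in Steps~2--3 rather than the topological scaffolding (which is routine: links of $(d-2)$-faces are circles, links of vertices are $(d-1)$-spheres, open stars of edges are $d$-balls). One must choose the spanning forest and the order of the collapses carefully enough that the surviving generators are \emph{canonically} the components $G_m$ and that the relators read off the $H_k$ come out on the nose as the words $\tilde{r}_k$; and, for $d\ge 3$, one must pin down precisely which relator becomes redundant after the ball-removal and verify that suppressing the corresponding $2$-cell does not disturb the earlier collapses.
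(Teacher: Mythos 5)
First, note that the paper contains no proof of this statement: Proposition~\ref{prop:gagliardi79b} is quoted as a classical theorem with the citation \cite{Gagliardi79FundGrpClsdNMfld}, so there is no in-paper argument to compare yours against and your sketch has to stand on its own. Its scaffolding is sound and standard: the dual block decomposition of $|M(\Gamma)|$ does have $1$-skeleton $\Gamma$ and $2$-cells attached along the components of the $\Gamma_{\{p,q\}}$, $\pi_1$ is computed from that $2$-skeleton, and the $d=2$ case is handled correctly.

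The genuine gaps are in Steps 2--3, and they are more than bookkeeping. (1) Your claim that the $2$-cells from the colour pairs other than $\{i,j\}$ (excluding $\{c,c'\}$ when $d=3$) eliminate all generators but $s$ of them is numerically impossible for $d=3$: after your collapses there are $n+1$ free generators, of which $n+1-s$ must be eliminated, while Proposition~\ref{prop:gagliardi79a} gives $g_{ic}+g_{ic'}+g_{jc}+g_{jc'}=n+4-2g_{ij}=n+2-2t$ available relators and $s=t$ here; each Tietze elimination consumes one relator, so you are short by $t-1$ whenever $t\geq 2$ (for the $4$-colour subgraphs of the K3 crystallization: $113$ generators to kill, $92$ relators available). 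Either the $\{i,j\}$-relators must participate in the eliminations --- and then it is no longer clear that the surviving relators are exactly the words $r_k$ --- or the reduction must be organised differently. (2) Dually, you never dispose of the non-$\{i,j\}$ relators that are \emph{not} consumed by an elimination; a relator may only be discarded after it is shown to be a consequence of the others, and nothing in the sketch shows these leftovers impose no further relations on $x_1,\dots,x_s$. (3) Your argument for dropping one of the $t+1$ relators when $d\geq 3$ conflates two dual objects: the relators correspond to the $(d-2)$-faces labelled $C\setminus\{i,j\}$, i.e.\ to the components $H_k$ of $\Gamma_{\{i,j\}}$, whereas $E_{s+1}$ is the edge dual to the component $G_{s+1}$ of $\Gamma_{C\setminus\{i,j\}}$; its open star contains no $(d-2)$-face with label set $C\setminus\{i,j\}$, so deleting it removes no relator $2$-cell and singles out no particular $H_k$. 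The usual way to repair all three points at once is to abandon the cell-by-cell Tietze reduction and instead decompose $|M(\Gamma)|$ as the union of regular neighbourhoods of the two complementary subcomplexes $K_{ij}$ (spanned by the vertices coloured $i$ and $j$, a graph with free $\pi_1$ of rank $s$) and $K_{\hat{i}\hat{j}}$, and apply van Kampen there; the words $\tilde{r}_k$ then appear as the boundaries of the $t+1$ meridian disks of the $(d-2)$-faces of $K_{\hat{i}\hat{j}}$, and the redundancy of one of them must still be argued separately.
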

For more about presentations of fundamental groups of crystallizations see \cite{Basak13MinCryst}.

\subsection{4-manifolds and the intersection form} 
\label{intersection}

Given a closed topological $d$-manifold $\mathbb{M}$ we know that any smooth structure on $\mathbb{M}$ 
determines a PL structure on $\mathbb{M}$, and the converse holds for dimension $d\leq 6$, 
thus, for the remainder of this article we will regard PL structures as equivalent to smooth
structures and, since we are in the setting of triangulations, only refer to PL structures.

Given a closed oriented $4$-manifold $\mathbb{M}$, its {\em intersection form} is the symmetric $2$-form defined by
$$ Q_\mathbb{M}: H^2(\mathbb{M};\mathbb{Z}) \times H^2(\mathbb{M};\mathbb{Z})\to \mathbb{Z},\,\,Q_\mathbb{M}(\alpha,\beta)=(\alpha \smile \beta)[\mathbb{M}]$$ 
where $\smile$ denotes the cup-product.

$Q_\mathbb{M}$ is bilinear, symmetric and is presented by a quadratic matrix of size $\operatorname{rk} H^2(\mathbb{M};\mathbb{Z})$
of determinant $\pm 1$. The size is called the {\em rank} of $Q_\mathbb{M}$ and the difference between positive
and negative eigenvalues is referred to as its {\em signature}. If, for all $\alpha \in H^2 (\mathbb{M}; \mathbb{Z})$ we have that
$Q_\mathbb{M}(\alpha, \alpha)$ is an even number, then $Q_\mathbb{M}$ is called {\em even}. Otherwise, it is called {\em odd}. 
In order to define $Q_\mathbb{M}$ more geometrically, one can present classes $\alpha , \beta \in H^2(\mathbb{M};\mathbb{Z})$ 
by embedded surfaces $S_{\alpha}$ and $S_{\beta}$ of their Poincar\'e duals (this is always possible, see
\cite[Proposition 1.2.3]{Gompf}) and then equivalently define 
$Q_\mathbb{M}(\alpha,\beta)$ as the intersection number of $S_{\alpha}$ and $S_{\beta}$:
$$Q_\mathbb{M}(\alpha,\beta)= S_{\alpha} \cdot S_{\beta} .$$
Note that if $\mathbb{M}$ is simply connected, then $H_2(\mathbb{M};\mathbb{Z})$ 
is a free $\mathbb{Z}$-module and there are isomorphisms 
$H_2(\mathbb{M};\mathbb{Z}) \cong \mathbb{Z}^m$ where $m = b_2(\mathbb{M})$ 
(see \cite{Freedman90TopOf4Mflds, Gompf, Scorpan05WildWorldOf4Mflds} for more). 
From the definition we can deduce the following.
\begin{prop}
	\label{prop:connSum}
	Let $\mathbb{M}$ and $\mathbb{N}$ be oriented closed $4$-manifolds with intersection forms $Q_\mathbb{M}$ and $Q_\mathbb{N}$. 
	Then their connected sum $\mathbb{M} \# \mathbb{N}$ has intersection form $Q_\mathbb{M} \oplus Q_\mathbb{N}$.
\end{prop}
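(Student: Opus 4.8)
The plan is to combine the Mayer--Vietoris sequence with the geometric description of $Q$ via embedded surfaces recalled above, after fixing the connected sum orientation so that it restricts to the given orientations on the two summands. Write $\mathbb{M}\#\mathbb{N}=\mathbb{M}_0\cup_{S^3}\mathbb{N}_0$, where $\mathbb{M}_0=\mathbb{M}\setminus \operatorname{int} B_\mathbb{M}$ and $\mathbb{N}_0=\mathbb{N}\setminus\operatorname{int} B_\mathbb{N}$ for embedded closed $4$-balls $B_\mathbb{M}\subset\mathbb{M}$, $B_\mathbb{N}\subset\mathbb{N}$, the two pieces glued along their common boundary $3$-sphere. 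From the long exact sequence of the pair $(\mathbb{M},\mathbb{M}_0)$ together with excision one gets $H_k(\mathbb{M},\mathbb{M}_0)\cong H_k(B^4,S^3)=0$ for $k\le 3$, so inclusion induces isomorphisms $H_k(\mathbb{M}_0;\mathbb{Z})\xrightarrow{\;\cong\;}H_k(\mathbb{M};\mathbb{Z})$ for $k\le 2$, and likewise for $\mathbb{N}$. Applying Mayer--Vietoris to a slight thickening of the decomposition, with overlap homotopy equivalent to $S^3$, and using $H_2(S^3)=H_1(S^3)=0$, yields
\begin{align*}
H_2(\mathbb{M}\#\mathbb{N};\mathbb{Z})\;\cong\;H_2(\mathbb{M}_0;\mathbb{Z})\oplus H_2(\mathbb{N}_0;\mathbb{Z})\;\cong\;H_2(\mathbb{M};\mathbb{Z})\oplus H_2(\mathbb{N};\mathbb{Z}),
\end{align*}
and, dually (universal coefficients and Poincar\'e duality), a corresponding direct sum decomposition of $H^2(\mathbb{M}\#\mathbb{N};\mathbb{Z})$ into the images of $H^2(\mathbb{M};\mathbb{Z})$ and $H^2(\mathbb{N};\mathbb{Z})$ under the maps induced by collapsing the respective complementary summand to a point (these quotients are homotopy equivalent to $\mathbb{M}$ and $\mathbb{N}$).

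Next I would read off the form on each block. Given $\alpha\in H^2(\mathbb{M};\mathbb{Z})$, its image in $H^2(\mathbb{M}\#\mathbb{N};\mathbb{Z})$ is Poincar\'e dual to a class coming from $H_2(\mathbb{M}_0;\mathbb{Z})$, hence (by the surface representation theorem cited above, applied inside $\mathbb{M}$ and pushed off $B_\mathbb{M}$) representable by an embedded surface $S_\alpha\subset\operatorname{int}\mathbb{M}_0$; similarly any $\beta$ coming from $\mathbb{N}$ is represented by $S_\beta\subset\operatorname{int}\mathbb{N}_0$. Since $\operatorname{int}\mathbb{M}_0$ and $\operatorname{int}\mathbb{N}_0$ are disjoint, $S_\alpha\cap S_\beta=\varnothing$, so $Q_{\mathbb{M}\#\mathbb{N}}(\alpha,\beta)=S_\alpha\cdot S_\beta=0$; thus the two blocks are orthogonal. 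For two classes $\alpha,\alpha'$ both coming from $\mathbb{M}$, I would choose transverse representing surfaces inside $\operatorname{int}\mathbb{M}_0$; the signed count of their intersection points then takes place entirely within $\operatorname{int}\mathbb{M}_0$, which is an oriented open subset of both $\mathbb{M}$ and $\mathbb{M}\#\mathbb{N}$, so it equals $Q_\mathbb{M}(\alpha,\alpha')$. The same holds for $\mathbb{N}$, and hence $Q_{\mathbb{M}\#\mathbb{N}}=Q_\mathbb{M}\oplus Q_\mathbb{N}$.

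I expect the only genuine subtlety to be that last step: confirming that the intersection number of two surfaces lying in $\operatorname{int}\mathbb{M}_0$ is unchanged whether it is evaluated in $\mathbb{M}$ or in $\mathbb{M}\#\mathbb{N}$, with no stray sign. The cleanest rigorous route is to replace the picture here by a cohomological diagram chase: under the excision/collapse map $(\mathbb{M}\#\mathbb{N},\mathbb{N}_0)\to(\mathbb{M},B_\mathbb{M})$ the fundamental class $[\mathbb{M}\#\mathbb{N}]$ corresponds to $[\mathbb{M}]$, and naturality of the cup product then identifies the pairing $(\alpha,\alpha')\mapsto(\alpha\smile\alpha')[\mathbb{M}\#\mathbb{N}]$ on the $\mathbb{M}$-block with $Q_\mathbb{M}$; the connected-sum orientation convention is exactly what makes the signs match. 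Everything else is a routine application of Mayer--Vietoris and Poincar\'e duality.
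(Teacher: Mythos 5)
Your proof is correct, and it is precisely the standard argument the paper has in mind: the paper states this proposition without any proof, remarking only that it follows ``from the definition'' --- namely the geometric description of $Q_\mathbb{M}$ via intersection numbers of embedded surfaces given just above it. Your Mayer--Vietoris computation of $H_2(\mathbb{M}\#\mathbb{N};\mathbb{Z})$, the orthogonality of the two blocks via disjoint representing surfaces, and the identification of each diagonal block through the degree-one collapse map fill in exactly the details the authors leave to the reader.
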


\bigskip
\begin{eg}
	\label{eg:intersection-form}
	Intersection forms of some well-known simply connected $4$-manifolds

\begin{enumerate}[(i)]
	\item The most common $4$-manifold $S^4$ does not have any $2$-homology. 
	Therefore we can take $\emptyset$ as its intersection form.
	\item The complex projective plane $\mathbb{CP}^2$ has intersection form 
	$Q_{\mathbb{CP}^2}=[+1]$ and the oppositely-oriented manifold
	$\overline{\mathbb{CP}^2}$ has intersection form 
	$Q_{\overline{\mathbb{CP}^2}}=[-1]$. Since $\mathbb{CP}^2 \cong \overline{\mathbb{CP}^2}$, 
	reversing orientation does not give a new manifold. 
	\item The manifold $S^2 \times S^2$ has intersection form 
	$$ Q_{S^2 \times S^2}= \begin{bmatrix} 0 & 1 \\  1 & 0 \end{bmatrix}.$$
	This matrix is often denoted by $H$ (from ``hyperbolic plane''). The oppositely-oriented manifold
	$\overline{S^2 \times S^2}$ has intersection form $Q_{\overline{S^2 \times S^2}}=-H$. Since $S^2 \times S^2 \cong 
	\overline{S^2 \times S^2}$, reversing orientation does not give a new manifold. 
	\item The twisted product $\TPSS$ has intersection form 
	$$Q_{\TPSS} = \begin{bmatrix} 1 & 1 \\  1 & 0 \end{bmatrix}.$$
	By a change of basis, we get $Q_{\TPSS}=[1] \oplus [-1]$. We will see that this proves
	$\TPSS \cong \mathbb{CP}^2 \# \overline{\mathbb{CP}^2}$. 
	\item The $E_8$-manifold $\mathbb{M}_{E_8}$ is a topological $4$-manifold with (even) intersection form
	 $$E_8 \,=\, Q_{\mathbb{M}_{E_8}}=\begin{bmatrix} 
	2 & 1 & 0 & 0 & 0 & 0 & 0 & 0 \\ 
	1 & 2 & 1 & 0 & 0 & 0 & 0 & 0 \\
	0 & 1 & 2 & 1 & 0 & 0 & 0 & 0 \\
	0 & 0 & 1 & 2 & 1 & 0 & 0 & 0 \\
	0 & 0 & 0 & 1 & 2 & 1 & 0 & 1 \\
	0 & 0 & 0 & 0 & 1 & 2 & 1 & 0 \\
	0 & 0 & 0 & 0 & 0 & 1 & 2 & 0 \\
	0 & 0 & 0 & 0 & 1 & 0 & 0 & 2 \\
	\end{bmatrix} .$$
	By Rohlin's theorem \cite{Rohlin84NewResults4Mflds} $\mathbb{M}_{E_8}$ 
	does not admit any PL structures, and thus is not
	of further interest when talking about simply connected PL $4$-manifolds. However, its intersection
	form $E_8$ will re-appear as a direct summand of the intersection form of the K3 surface.
	\item Recall that the K3 surface is a closed oriented connected and simply connected $4$-manifold. 
	Its intersection form is even of rank $22$ and signature $16$. In a suitable
	basis it is represented by the unimodular matrix $(-E_8) \oplus (-E_8) \oplus 3H$. 
	As a PL-manifold, it is prime (i.e., it can not be expressed as connected sum of PL-manifolds).
\end{enumerate}
\end{eg}
Freedman used the intersection for his celebrated classification of simply connected 
topological $4$-manifolds. More precisely, he proved the following statement.

\begin{theo}[Freedman \cite{Freedman82Top4DimMnf}]\label{Freedman1}
For every unimodular symmetric bilinear form $Q$ there exists 
a simply connected, closed, topological $4$-manifold $\mathbb{M}$ such 
that $Q_\mathbb{M} = Q$. If $Q$ is even, this manifold is unique (up to
homeomorphism). If $Q$ is odd, there are exactly two different 
homeomorphism types of manifolds with the given intersection form.
At most one of these homeomorphism types carries a PL structure.
Consequently, simply connected, PL $4$-manifolds are determined 
up to homeomorphism by their intersection forms.
\end{theo}
Using the classification theorem we now can state the converse of Proposition~\ref{prop:connSum}
for topological 4-manifolds.
\begin{cor}
	\label{Lemma:direct}
	If $\mathbb{M}$ is simply connected and $Q_\mathbb{M}$ splits as a direct sum $Q_\mathbb{M} = Q' \oplus Q''$, 
	then there exists topological $4$-manifolds $\mathbb{N}'$ and $\mathbb{N}''$ with intersection form
	$Q'$ and $Q''$ such that $\mathbb{M} \cong \mathbb{N}' \# \mathbb{N}''$. 
\end{cor}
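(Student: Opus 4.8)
The plan is to combine Freedman's classification theorem (Theorem~\ref{Freedman1}) with Proposition~\ref{prop:connSum}. Since $\mathbb{M}$ is simply connected and closed, its intersection form $Q_\mathbb{M}$ is unimodular and symmetric; by hypothesis $Q_\mathbb{M} = Q' \oplus Q''$. Because a direct summand of a unimodular form is again unimodular (the matrix is block diagonal and $\det Q_\mathbb{M} = \det Q' \cdot \det Q'' = \pm 1$ forces $\det Q' = \det Q'' = \pm 1$), both $Q'$ and $Q''$ are themselves unimodular symmetric bilinear forms. Applying the first part of Theorem~\ref{Freedman1} to each, I obtain simply connected closed topological $4$-manifolds $\mathbb{N}'$ and $\mathbb{N}''$ with $Q_{\mathbb{N}'} = Q'$ and $Q_{\mathbb{N}''} = Q''$.

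Next I would form the connected sum $\mathbb{N}' \# \mathbb{N}''$. This is again a closed simply connected $4$-manifold (connected sum of simply connected manifolds in dimension $\geq 3$ is simply connected, by van Kampen), and by Proposition~\ref{prop:connSum} its intersection form is $Q_{\mathbb{N}'} \oplus Q_{\mathbb{N}''} = Q' \oplus Q'' = Q_\mathbb{M}$. To conclude $\mathbb{M} \cong \mathbb{N}' \# \mathbb{N}''$ I invoke the uniqueness half of Freedman's theorem: both $\mathbb{M}$ and $\mathbb{N}' \# \mathbb{N}''$ are simply connected closed topological $4$-manifolds with the same intersection form $Q_\mathbb{M}$. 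If $Q_\mathbb{M}$ is even, uniqueness is immediate. If $Q_\mathbb{M}$ is odd, there are a priori two homeomorphism types; but here one must be slightly careful, and this is the only real subtlety in the argument.

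The main obstacle, then, is the odd case: Freedman's theorem only guarantees that $\mathbb{M}$ and $\mathbb{N}' \# \mathbb{N}''$ agree up to homeomorphism \emph{provided} they represent the same one of the two homeomorphism types realizing $Q_\mathbb{M}$. The two types are distinguished by the Kirby--Siebenmann invariant in $\mathbb{Z}/2$, which is additive under connected sum. In the generality stated, $\mathbb{N}'$ and $\mathbb{N}''$ are only asserted to exist as topological manifolds, so one should either (a) restrict attention, as the paper ultimately does, to the case where $\mathbb{M}$ carries a PL structure — then the Kirby--Siebenmann invariant of $\mathbb{M}$ vanishes, and one chooses $\mathbb{N}'$, $\mathbb{N}''$ to be the (smoothable, hence KS-trivial) representatives, whose connected sum also has trivial KS invariant and therefore is homeomorphic to $\mathbb{M}$; or (b) note that the non-smoothable choices can be avoided whenever at least one smoothable representative exists, which suffices for the ``standard type'' manifolds of interest. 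I would phrase the corollary and its proof so that in the odd case we pick the representative of $\mathbb{N}'$, $\mathbb{N}''$ matching the Kirby--Siebenmann class, so that the connected sum lands in the correct homeomorphism type; for the PL manifolds used in the sequel this matching is automatic.

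\begin{proof}
	Since $\mathbb{M}$ is closed and simply connected, $Q_\mathbb{M}$ is a unimodular symmetric bilinear form.
	Writing $Q_\mathbb{M} = Q' \oplus Q''$ in block-diagonal form, $\det Q_\mathbb{M} = \det Q' \cdot \det Q'' = \pm 1$, so
	$\det Q' = \det Q'' = \pm 1$ and both $Q'$ and $Q''$ are themselves unimodular. By Theorem~\ref{Freedman1} there exist
	simply connected closed topological $4$-manifolds $\mathbb{N}'$ and $\mathbb{N}''$ with $Q_{\mathbb{N}'} = Q'$ and $Q_{\mathbb{N}''} = Q''$.
	If $Q'$ or $Q''$ is odd, then among the two homeomorphism types realizing it, we choose the one whose Kirby--Siebenmann
	invariant, together with that of the other summand, matches the Kirby--Siebenmann invariant of $\mathbb{M}$; this is possible
	since that invariant is additive under connected sum. (When $\mathbb{M}$ admits a PL structure its Kirby--Siebenmann invariant
	vanishes and one takes $\mathbb{N}'$, $\mathbb{N}''$ to be the smoothable representatives.)

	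Now form $\mathbb{N}' \# \mathbb{N}''$. This is again closed and, by van Kampen's theorem, simply connected. By
	Proposition~\ref{prop:connSum} its intersection form is $Q_{\mathbb{N}'} \oplus Q_{\mathbb{N}''} = Q' \oplus Q'' = Q_\mathbb{M}$,
	and by construction it lies in the same homeomorphism type as $\mathbb{M}$ among those realizing $Q_\mathbb{M}$. Hence the
	uniqueness part of Theorem~\ref{Freedman1} gives $\mathbb{M} \cong \mathbb{N}' \# \mathbb{N}''$.
\end{proof}
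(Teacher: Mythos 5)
Your argument is correct and is exactly the route the paper intends: the corollary is stated there without proof as an immediate consequence of Theorem~\ref{Freedman1} combined with Proposition~\ref{prop:connSum}, which is precisely what you carry out. Your explicit handling of the odd case via the additivity of the Kirby--Siebenmann invariant (noting that an odd $Q_\mathbb{M}$ forces at least one odd summand, whose representative can be chosen to match $\operatorname{ks}(\mathbb{M})$) is a genuine subtlety that the paper glosses over, and it is resolved correctly.
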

%
Thus, the K3 surface (which is prime as PL-manifold) can be expressed as a connected sum of the form
$ 2 \,\mathbb{M}_{E_8} \# 3 \, S^2 \times S^2$.

The following result about simply connected PL $4$-manifolds 
is a combination of Theorems 1.2.21, 1.2.30 and 1.2.31 in \cite{Gompf} due to 
work by Rohlin \cite{Rohlin84NewResults4Mflds}, Milnor and Husemoller \cite{Milnor73SymmBilForms}, 
Donaldson \cite{Donaldson83GaugeTheory4Mflds}, and Furuta \cite{Furuta01MonopoleEq}.
\begin{prop}
	\label{prop:iforms}
	Suppose that $Q$ is the intersection form of a simply connected PL $4$-manifold.
	Then if $Q$ is odd, it is isomorphic to $k [ 1 ] \oplus l [-1]$, and if
	$Q$ is even it is isomorphic to $2m E_8 \oplus r H$, for some integers $k,l,r \geq 0$,
	$m \in \mathbb{Z}$, $r \geq 2 | m | + 1$.
\end{prop}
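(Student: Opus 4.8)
The plan is to deduce the statement from the classical structure theory of unimodular symmetric bilinear forms together with the deep constraints imposed on intersection forms of smooth (equivalently, PL) $4$-manifolds, all of which are already cited above in \cite{Gompf}. First I would record the standing algebraic setup: since $\mathbb{M}$ is simply connected, Poincar\'e duality and the universal coefficient theorem give that $H^2(\mathbb{M};\mathbb{Z})$ is a finitely generated free abelian group and that $Q=Q_\mathbb{M}$ is a unimodular symmetric bilinear form over $\mathbb{Z}$, so that the purely algebraic classification results apply verbatim to $Q$. From there the task reduces to excluding those forms that cannot be realized on a PL $4$-manifold and to extracting the numerical constraints on the parameters.

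Next I would split into the odd and the even case. If $Q$ is odd and indefinite, the Milnor--Husemoller classification of indefinite odd unimodular forms yields $Q \cong k[1] \oplus l[-1]$ with $k,l \geq 1$ directly. If $Q$ is odd and definite, Donaldson's diagonalizability theorem, applied to the closed smooth $4$-manifold $\mathbb{M}$, forces $Q$ to be diagonal, i.e.\ $Q \cong k[1]$ or $Q \cong l[-1]$; in all subcases $Q \cong k[1] \oplus l[-1]$ for suitable $k,l \geq 0$, which settles the odd case.

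If $Q$ is even (and nontrivial), I would first note it cannot be definite: a definite even unimodular form admits no diagonalization, contradicting Donaldson's theorem as above, so $Q$ is indefinite. The classification of indefinite even unimodular forms then gives $Q \cong a E_8 \oplus b H$ for some $a \in \mathbb{Z}$ and $b \geq 1$ (writing $a E_8$ as $(-a)(-E_8)$ when $a<0$, and normalizing via $E_8 \oplus (-E_8) \cong 8H$). Since a simply connected $4$-manifold with even intersection form is spin, Rohlin's theorem gives $16 \mid \sigma(\mathbb{M}) = 8a$, hence $a=2m$ with $m \in \mathbb{Z}$ and $Q \cong 2mE_8 \oplus bH$. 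Finally, when $m \neq 0$ the signature $\sigma(\mathbb{M}) = 16m$ is nonzero, so Furuta's $10/8$ theorem applies and gives $b_2(\mathbb{M}) \geq \frac{5}{4}\,|\sigma(\mathbb{M})| + 2$; substituting $b_2(\mathbb{M}) = 16|m| + 2b$ and $|\sigma(\mathbb{M})| = 16|m|$ and simplifying yields $b \geq 2|m| + 1$, while for $m=0$ this inequality reads $b \geq 1$, which already holds. Putting $r = b$ gives $Q \cong 2mE_8 \oplus rH$ with $r \geq 2|m|+1$, completing the argument.

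Since the deep inputs (the Milnor--Husemoller number theory, Donaldson's and Furuta's gauge-theoretic theorems, and Rohlin's divisibility theorem) are invoked as black boxes, there is no genuine obstacle here; the proof is essentially bookkeeping. The two points that need care are remembering to call on Donaldson to rule out even definite forms — the form $E_8$ itself is perfectly admissible algebraically and must be excluded topologically — and performing the Furuta estimate with the correct numerology $\sigma = 16m$, $b_2 = 16|m| + 2r$, from which $r \geq 2|m|+1$ drops out. A minor bookkeeping point is the normalization $E_8 \oplus (-E_8) \cong 8H$ used to bring a general indefinite even form into the advertised shape.
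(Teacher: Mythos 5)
Your proposal is correct and follows exactly the route the paper takes: the paper states this proposition without proof as ``a combination of Theorems 1.2.21, 1.2.30 and 1.2.31 in \cite{Gompf}'' resting on Rohlin, Milnor--Husemoller, Donaldson, and Furuta, and your write-up simply assembles those same four ingredients (algebraic classification of indefinite unimodular forms, Donaldson diagonalizability for the definite case, Rohlin for $16 \mid \sigma$, and Furuta's $10/8$ inequality for $r \geq 2|m|+1$) with the correct numerology.
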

In particular, if $Q$ is even, it can not be definite. Furthermore, we have the following conjecture
\begin{conj}[$11/8$-conjecture \cite{Matsumoto82ElevenEight}]
	\label{conj:118}
	If $Q$, even, is the intersection form of a simply connected PL $4$-manifold.
	Then $Q \cong 2m E_8 \oplus r H$, $r \geq 0$, $m \in \mathbb{Z}$, $r \geq 3 | m |$.
\end{conj}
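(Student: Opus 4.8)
The final statement is the $11/8$-conjecture, which at present is an open problem; what follows is therefore not a proof but a description of the line of attack that the known partial results point towards, together with the obstruction that has so far prevented it from succeeding. The plan is first to rephrase the conjecture analytically. A simply connected PL $4$-manifold $\mathbb{M}$ with even intersection form $Q$ is spin: since $\mathbb{M}$ is simply connected, $w_2(\mathbb{M})$ is the mod-$2$ reduction of a characteristic element of $Q$, and this vanishes precisely when $Q$ is even. Reversing orientation if necessary we assume $\sigma(\mathbb{M}) \leq 0$; by Proposition~\ref{prop:iforms} we then have $Q \cong 2mE_8 \oplus rH$ with $m \leq 0$, so that $b_2(\mathbb{M}) = 16|m| + 2r$ and $|\sigma(\mathbb{M})| = 16|m|$, and the conjectured inequality $r \geq 3|m|$ is equivalent to $b_2(\mathbb{M}) \geq \tfrac{11}{8}\,|\sigma(\mathbb{M})|$. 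Setting $k := |\sigma(\mathbb{M})|/8$ (this equals $-\operatorname{ind}_{\mathbb{C}} D$ for the Dirac operator $D$, and is even by Rohlin's theorem) and $b^+ := b_2^+(\mathbb{M})$, one has $b_2(\mathbb{M}) = 2b^+ + 8k$, so the goal becomes exactly $2b^+ \geq 3k$. This target is sharp: the K3 surface has $k = 2$ and $b^+ = 3$, and iterated connected sums of copies of K3 attain equality for every even $k$ (compare Example~\ref{eg:intersection-form}(vii)).

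The central tool would be the Bauer--Furuta stable-homotopy refinement of Seiberg--Witten theory: for a closed spin $4$-manifold with $b_1 = 0$ the Seiberg--Witten equations produce a $\mathrm{Pin}(2)$-equivariant stable cohomotopy class, and the key point is that in our situation this class is \emph{not} stably null-homotopic. Decomposing the $\mathrm{Pin}(2)$-representations that occur --- the positive-definite part of $H^2$ contributes copies of the real sign representation $\widetilde{\mathbb{R}}$, one for each unit of $b^+$, while the Dirac index contributes copies of the quaternionic representation $\mathbb{H}$, one for each unit of $k$ --- the non-nullity translates into the existence of a nontrivial $\mathrm{Pin}(2)$-equivariant stable map between the associated representation spheres, and any estimate of how far such a map can be from ``balanced'' converts directly into a lower bound for $b^+$ in terms of $k$. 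Pushing this through $\mathrm{Pin}(2)$-equivariant $K$-theory, in the manner of Furuta, yields $b^+ \geq k + 1$ for $k \geq 1$, which is exactly the ``$\tfrac{10}{8}+2$'' estimate recorded in Proposition~\ref{prop:iforms}. Replacing $K$-theory by finer equivariant cohomology theories, by $\mathrm{Pin}(2)$-equivariant stable homotopy groups of spheres together with Adams-type spectral sequence computations over a suitable $\mathrm{Pin}(2)$-Steenrod algebra, or by Floer-theoretic connected-sum refinements in the spirit of Manolescu, improves the additive constant, the present record being of the form $b^+ \geq k + c$ for a small constant $c$.

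The hard part --- and the reason the conjecture is still open --- is that every one of these refinements improves the \emph{constant} in an estimate of the shape $b^+ \geq k + O(1)$, whereas $\tfrac{11}{8}$ asks instead for the larger \emph{slope} $b^+ \geq \tfrac{3}{2}k$. Bridging that gap seems to require a genuinely new geometric ingredient: for instance, exploiting parametrised families of Seiberg--Witten equations and the additional equivariant structure they carry; pushing Furuta's original splitting argument (cutting along an embedded surface, or along a connected-sum neck, and iterating) so as to extract a multiplicative rather than merely additive gain; or harnessing the full $\mathrm{Spin}(3)$-symmetry of the four-dimensional equations rather than only its subgroup $\mathrm{Pin}(2)$. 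Until such an idea is available, the honest status of this ``proof'' is that of a research programme, and the statement is correctly labelled as what it is: a conjecture.
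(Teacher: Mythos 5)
You are right that there is nothing to prove here: the statement is recorded in the paper as Conjecture~\ref{conj:118}, attributed to Matsumoto, and the paper supplies no argument for it whatsoever --- it only records the weaker, established $10/8$-type bound $r \geq 2|m|+1$ as part of Proposition~\ref{prop:iforms} and then uses the conjecture as a hypothesis (``modulo the $11/8$-conjecture'') when enumerating the topological types of simply connected PL $4$-manifolds. Your reformulation is correct: with $Q \cong 2mE_8 \oplus rH$, $m \leq 0$, one has $b_2 = 16|m|+2r$ and $|\sigma| = 16|m|$, so $r \geq 3|m|$ is exactly $b_2 \geq \tfrac{11}{8}|\sigma|$, and in the normalization $k = |\sigma|/8$, $b^+ = r$ it reads $2b^+ \geq 3k$, sharp for the K3 surface ($k=2$, $b^+=3$). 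One small slip: you cite Example~\ref{eg:intersection-form}(vii), but that example has only items (i)--(vi); the K3 surface is item (vi).

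Since you explicitly decline to claim a proof, the only substantive comment is on framing. Your sketch of the Bauer--Furuta / $\mathrm{Pin}(2)$-equivariant programme and its obstruction (all known refinements improve the additive constant in $b^+ \geq k + O(1)$, whereas the conjecture demands the slope $\tfrac{3}{2}$) is an accurate description of why the problem remains open, and it goes well beyond anything in the paper --- but it is a literature survey, not an argument, and it could not be spliced in as a proof of Conjecture~\ref{conj:118} without overturning the paper's own careful hedging. The correct ``answer'' relative to this paper is simply that no proof exists and none is offered; everything the paper actually needs downstream (the connected-sum constructions of simple crystallizations of $k\,\mathbb{CP}^2 \,\#\, l\,\overline{\mathbb{CP}^2}$ and $m\,K3 \,\#\, r\,S^2\times S^2$) is stated conditionally on the conjecture or restricted to the known ``standard'' types, so nothing in the paper is at risk.
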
	

In other words, Conjecture~\ref{conj:118} states that the rank of any even intersection
form admitting a PL structure is at least $11/8$ times as large as its signature, hence
the name. Assuming Conjecture~\ref{conj:118} is true it follows that all simply connected 
PL $4$-manifolds are homeomorphic to either 
$k \,\mathbb{CP}^2 \,\, \#  \,\, l \,\overline{\mathbb{CP}^2}$ or 
$\tilde{m} \, K3 \,\, \# \,\, \tilde{r} \, S^2 \times S^2$.

\medskip 
For complex hypersurfaces
$S_d = \{[z_0 : z_1 : z_2 : z_3] \in \mathbb{CP}^3 |  \sum {z_i}^d = 0\} \subset \mathbb{CP}^3$,
where $d$ is a positive integer, we have a similar result.

\begin{prop}[Theorem 1.3.8 \cite{Gompf}, McDuff and Salamon \cite{McDuff95SymplecticTop}]
The hypersurface $S_d$ is a PL, simply connected, complex surface. If $d$ is odd, 
then $Q_{S_d}$ is equivalent to $\lambda_d [1] \oplus \mu_d (-[1])$, 
where $\lambda_d = \frac{1}{3}(d^3 - 6 d^2 +11d -3)$ and $\mu_d =\frac{1}{3}(d-1)(2d^2-4d+3)$;
if $d$ is even, then $Q_{S_d}$ is equivalent to $l_d (-E_8) \oplus m_d H$, 
where $l_d = \frac{1}{24}d(d^2-4)$ and $m_d =\frac{1}{3}(d^3-6d^2+11d-3)$.
\end{prop}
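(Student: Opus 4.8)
\textit{Proof proposal.} The plan is to pin down the intersection form of $S_d$ from complex geometry and then quote Proposition~\ref{prop:iforms}, since an intersection form coming from a simply connected PL $4$-manifold is determined by its rank, signature and parity (together with unimodularity, which holds by Poincar\'e duality).

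First I would dispatch the structural claims. The hypersurface $S_d$ is smooth, because the partial derivatives $d\,z_i^{d-1}$ of $\sum z_i^d$ have no common zero in $\mathbb{CP}^3$; hence $S_d$ is a smooth complex projective surface and in particular a closed PL $4$-manifold. Simple connectivity is the Lefschetz hyperplane theorem applied to the ample divisor $S_d \subset \mathbb{CP}^3$: the inclusion induces an isomorphism on $\pi_1$, so $\pi_1(S_d) \cong \pi_1(\mathbb{CP}^3) = 0$. Consequently $b_1(S_d) = b_3(S_d) = 0$ and, by Poincar\'e duality, $b_0 = b_4 = 1$, so $b_2(S_d) = \chi(S_d) - 2$ and $H_2(S_d;\mathbb{Z})$ is free.

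Next I would compute $\chi(S_d)$ and $\sigma(S_d)$. Let $h \in H^2(S_d;\mathbb{Z})$ be the restriction of the hyperplane class of $\mathbb{CP}^3$; then $\langle h^2,[S_d]\rangle = d$. By the adjunction formula $K_{S_d} = (d-4)h$, hence $c_1(S_d) = (4-d)h$ and $\langle c_1^2,[S_d]\rangle = (4-d)^2 d$; from the normal bundle sequence $0 \to TS_d \to T\mathbb{CP}^3|_{S_d} \to \mathcal{O}(d)|_{S_d} \to 0$ one reads off $c_2(S_d) = (d^2-4d+6)h^2$, so $\chi(S_d) = d^3 - 4d^2 + 6d$. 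The Hirzebruch signature theorem then gives $\sigma(S_d) = \tfrac13\big(\langle c_1^2,[S_d]\rangle - 2\langle c_2,[S_d]\rangle\big) = -\tfrac13 d(d^2-4)$. Solving $b_2^{\pm} = \tfrac12(b_2 \pm \sigma)$ with $b_2 = d^3-4d^2+6d-2$ yields $b_2^+ = \tfrac13(d^3-6d^2+11d-3)$ and $b_2^- = \tfrac13(2d^3-6d^2+7d-3) = \tfrac13(d-1)(2d^2-4d+3)$; a direct comparison shows these equal $\lambda_d,\mu_d$ in the odd case and, in the even case, $m_d$ and $8l_d+m_d$ respectively, while $\sigma = -8l_d$.

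Finally I would settle the parity of $Q_{S_d}$. If $d$ is odd, the class $h$ already has $Q_{S_d}(h,h) = d$ odd, so $Q_{S_d}$ is odd; if $d$ is even, then $w_2(S_d) \equiv c_1(S_d) \equiv (4-d)h \equiv 0 \pmod 2$ (recall $H^2(S_d;\mathbb{Z})$ is torsion-free), so $Q_{S_d}$ is even. Feeding the rank $b_2$, the signature $\sigma$ and this parity into Proposition~\ref{prop:iforms} now forces $Q_{S_d} \cong \lambda_d[1] \oplus \mu_d(-[1])$ for $d$ odd, and $Q_{S_d} \cong l_d(-E_8) \oplus m_d H$ for $d$ even -- here one notes that $l_d = \tfrac{1}{24}d(d^2-4)$ is an even integer, so the $E_8$-part has exactly the normal form permitted by Proposition~\ref{prop:iforms}. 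I expect the only real friction to be bookkeeping: getting the Chern-number and signature computations and the sign conventions precisely right, and then checking that the triple (rank, signature, parity) genuinely singles out the claimed normal forms; for the low degrees $d \le 2$ one can bypass all of this by recognising $S_1 \cong \mathbb{CP}^2$ and $S_2 \cong \mathbb{CP}^1 \times \mathbb{CP}^1 \cong S^2 \times S^2$ outright.
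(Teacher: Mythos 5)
Your proof is correct, and the arithmetic checks out: $c_1 = (4-d)h$, $c_2 = (d^2-4d+6)h^2$ give $\chi = d^3-4d^2+6d$ and $\sigma = -\tfrac13 d(d^2-4)$, from which $b_2^{\pm}$ match $\lambda_d,\mu_d$ (resp.\ $m_d$, $8l_d+m_d$), and the parity argument via $Q(h,h)=d$ resp.\ $w_2 \equiv c_1 \equiv 0 \pmod 2$ is the right one. Note, however, that the paper offers no proof of this proposition at all --- it is quoted verbatim from Gompf--Stipsicz (Theorem 1.3.8) and McDuff--Salamon --- so there is nothing to compare against except the cited sources, where your argument (Lefschetz plus Chern-number computation plus the classification of indefinite unimodular forms, here packaged as Proposition~\ref{prop:iforms}) is exactly the standard proof; the sanity checks $S_1 \cong \mathbb{CP}^2$, $S_2 \cong S^2\times S^2$, and $S_4 \cong K3$ with $Q_{S_4} = 2(-E_8)\oplus 3H$ all confirm your formulas.
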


%

\section{Simple crystallizations of simply connected PL 4-manifolds}
\label{sec:simpCryst}

Let $M$ be a contracted pseudotriangulation of a (simply connected) $4$-manifold $\mathbb{M}$, 
then $M$ has at least $\binom{5}{2}=10$ edges with equality if and only if
$M$ is simple.
%
%
In addition, since simple crystallizations by construction always describe simply
connected $4$-manifolds, and simply connected $4$-manifolds are always orientable,
it follows from Proposition~\ref{prop:ca-ga-pe80} that all simple crystallizations
are bipartite. Alternatively, this can also be followed from the fact that 
for any simple crystallization of a $4$-manifold $(\Gamma,\gamma)$ with color-set $C$, we have that 
$\Gamma_{C\setminus \{c\}}$ is a crystallization of $S^3$ for all $c \in C$. Thus,
$\Gamma_{C\setminus \{c\}}$ is bipartite for all $c \in C$.
Furthermore, $\Gamma_{\{i,j,k \}}$ is connected for all triples $i,j,k \in C$,
and hence the bipartite partition of $\Gamma$ is already fixed by the choice of 
any three colors. Hence, if $\Gamma$ is not bipartite, then there exists a color $c$ such 
that $\Gamma_{C\setminus \{c\}}$ is not bipartite, contradiction.

This property will be very useful in later sections of this article
and we will think of all simple crystallizations as bipartite graphs.
More generally, for arbitrary contracted simplicial cell complexes we have.

\begin{lemma} \label{Lemma:g=1}
	Let $X$ be a $d$-dimensional contracted simplicial cell complex and let $(\Gamma,\gamma)$ 
	be the $(d+1)$-colored graph corresponding to $X$ with color set $C$. Then
	$X$ is $k$-simple if and only if for all subsets $D \subset C$ of size $|d-k|$ the subgraph
	$\Gamma_{D}$ is connected.
\end{lemma}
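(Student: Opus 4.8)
The plan is to reduce the lemma to the standard dictionary between the faces of $M(\Gamma)$ and the connected components of colour-restricted subgraphs of $\Gamma$, of which the identity ``number of $i$-labelled vertices of $M(\Gamma)$ equals $g_{C\setminus\{i\}}$'' recalled above is the case $|S|=1$. Write $\sigma_v^S$ for the face of the $d$-simplex $\sigma_v$ spanned by the vertices with labels in $S\subseteq C$. The first step is to establish the general fact: \emph{for every $S\subseteq C$ the faces of $X=M(\Gamma)$ with vertex-label set $S$ are in bijection with the connected components of $\Gamma_{C\setminus S}$}; more precisely, $\sigma_u^S$ and $\sigma_v^S$ are identified in $M(\Gamma)$ if and only if $u$ and $v$ lie in the same component of $\Gamma_{C\setminus S}$.

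To prove this, I would observe that an elementary gluing along a colour-$i$ edge $\{u,v\}$ identifies the $(d-1)$-faces $\sigma_u^{C\setminus\{i\}}$ and $\sigma_v^{C\setminus\{i\}}$ label-preservingly, hence also all of their common sub-faces $\sigma^T$ with $i\notin T$. Applying this along every edge of $\Gamma_{C\setminus S}$ and using transitivity shows that $\sigma_u^S=\sigma_v^S$ whenever $u$ and $v$ are connected in $\Gamma_{C\setminus S}$. For the converse, since $X=M(\Gamma)$ is a simplicial cell complex it is regular, so there is no self-identification inside a single $\sigma_w$; a point in the relative interior of $\sigma_u^S$ has nonzero barycentric coordinate at every label of $S$, hence is untouched by any colour-$i$ gluing with $i\in S$ and is sent to the relative interior of some $\sigma_v^S$ by any colour-$i$ gluing with $i\notin S$. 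Reading off the colours and endpoints of the resulting chain of gluings yields a walk from $u$ to $v$ inside $\Gamma_{C\setminus S}$. This proves the claim, so the number of faces of $X$ on a given label set $S$ equals $g_{C\setminus S}$.

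To conclude, recall that $X$ is contracted, so it has exactly $d+1$ vertices, one for each colour; thus a set of $k+1$ vertices of $X$ is the same as a $(k+1)$-subset $S\subseteq C$, and the number of $k$-cells with that vertex set is $g_{C\setminus S}$ by the claim. Since $\Gamma_{C\setminus S}$ has the nonempty vertex set $V(\Gamma)$, we always have $g_{C\setminus S}\ge 1$. Hence $X$ is $k$-simple --- i.e.\ every $(k+1)$-set of vertices lies in at most one $k$-cell --- if and only if $g_{C\setminus S}\le 1$, equivalently $g_{C\setminus S}=1$, for every $(k+1)$-subset $S\subseteq C$. Writing $D=C\setminus S$, so that $|D|=(d+1)-(k+1)=d-k=|d-k|$ and $D$ ranges over all subsets of $C$ of size $d-k$ as $S$ ranges over all $(k+1)$-subsets, this is exactly the assertion that $\Gamma_D$ is connected for every such $D$.

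The only genuinely delicate point is the converse half of the claim: making rigorous that the equivalence relation defining $M(\Gamma)$ produces \emph{no unexpected identifications}, i.e.\ that $\sigma_u^S$ and $\sigma_v^S$ can be identified only through a chain of gluings along colours in $C\setminus S$. This rests on $M(\Gamma)$ being regular together with the fact that the elementary gluings are label-preserving homeomorphisms of codimension-one faces, which forces each gluing either to fix the relative interior of the face with label set $S$ pointwise or to carry it bijectively onto that of another such face; everything else is bookkeeping with subsets of the colour set.
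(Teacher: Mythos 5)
Your proof is correct and follows essentially the same route as the paper's: both arguments rest on the dictionary that the $k$-cells of $M(\Gamma)$ with vertex-label set $S$ correspond bijectively to the connected components of $\Gamma_{C\setminus S}$, and then translate $k$-simplicity into $g_{C\setminus S}=1$ for all $(k+1)$-subsets $S$. The only difference is one of detail: the paper asserts this correspondence ``by construction,'' whereas you spell out the identification argument in both directions.
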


\begin{proof}
	Let $X$ be $k$-simple, i.e., any set of $k+1$ vertices is in a unique $k$-cell.
	Let $D \subset C$ be of size $(d-k)$. Let $C\setminus D = \{j_1,\dots,j_{k+1}\}$ and
	let $v_{q}$ be the vertex of $X$ corresponding to the color $j_q$ 
	for $1\leq q \leq k+1$. Now, by construction, for each connected component
	of $\Gamma_{D}$ there is a distinct $k$-cell through 
	$v_1, \dots, v_{k+1}$ in $X$. Since $X$ is $k$-simple it follows
	that $\Gamma_{D}$ has exactly one component.
	
	Conversely, suppose $\Gamma_{D}$ is connected for all $D \subset C$ of size $d-k$. Let $v_1, \dots, v_{k+1}$ be $k+1$ vertices of $X$. Let $D=C \setminus \{j_1,\dots,j_{k+1}\}$ where $j_q$ is the color corresponding to the vertex $v_{q}$.
Then, by assumption, $\Gamma_{D}$ be connected.
This implies, number of $k$-cells through $v_1, \dots, v_{k+1}$ is one. This proves that $X$ is $k$-simple.
\end{proof}

\begin{lemma}\label{Lemma:connected-sum}
	Let $\mathbb{M}$ and $\mathbb{M}^{\prime}$ be closed connected $d$-manifolds
	each admitting a $k$-simple crystallization, then
	$\mathbb{M} \# \mathbb{M}^{\prime}$ admits a $k$-simple crystallization
	as well.
\end{lemma}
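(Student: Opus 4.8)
The plan is to realise the connected sum via the colored-graph construction of Section~\ref{ssec:colGraphs}. I would start with $k$-simple crystallizations $(\Gamma_1,\gamma_1)$ of $\mathbb{M}$ and $(\Gamma_2,\gamma_2)$ of $\mathbb{M}^{\prime}$, both with color set $C=\{0,\dots,d\}$, choose vertices $v_1\in V(\Gamma_1)$ and $v_2\in V(\Gamma_2)$, and set $\Gamma:=\Gamma_1\#_{v_1v_2}\Gamma_2$, picking the color permutation used in the amalgamation so that $\Gamma$ represents the desired (oriented) connected sum. It is classical that $\Gamma$ is then a crystallization of $\mathbb{M}\#\mathbb{M}^{\prime}$ (see, e.g., \cite{Ferri86GraphTheoryCrystallizations}); on the level of pseudotriangulations $M(\Gamma)$ is obtained from $M(\Gamma_1)$ and $M(\Gamma_2)$ by deleting the two facets $\sigma_{v_1}$ and $\sigma_{v_2}$ (more precisely, their relative interiors) and identifying the resulting boundary $(d-1)$-spheres $\partial\sigma_{v_1}$ and $\partial\sigma_{v_2}$ along the unique color-preserving simplicial isomorphism between them. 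In particular $M(\Gamma)$ is again contracted, with vertex set $\{0,\dots,d\}$.

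It then remains to see that $M(\Gamma)$ is $k$-simple. Here the decisive observation is that $k\le d-1$ by Lemma~\ref{Lemma:simple-a} and that, exactly as in the proof of Lemma~\ref{Lemma:simple-b}, a contracted $k$-simple pseudotriangulation satisfies $\operatorname{skel}_k(M(\Gamma_i))=\operatorname{skel}_k(\sigma_{v_i})$. Since $k\le d-1$ we have $\operatorname{skel}_k(\sigma_{v_i})\subseteq\partial\sigma_{v_i}$, so the entire $k$-skeleton of each summand lies in the boundary sphere that is retained and glued in the connected sum; moreover deleting an open facet removes no faces of dimension $\le d-1$. Consequently $\operatorname{skel}_k(M(\Gamma))$ is precisely $\operatorname{skel}_k(\partial\sigma_{v_1})$ (identified with $\operatorname{skel}_k(\partial\sigma_{v_2})$), which is isomorphic to the $k$-skeleton of the standard $d$-simplex and is therefore a simplicial complex. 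Hence $M(\Gamma)$ is $k$-simple, i.e.\ $(\Gamma,\gamma)$ is a $k$-simple crystallization of $\mathbb{M}\#\mathbb{M}^{\prime}$.

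The argument is short and I do not expect a serious obstacle; the only point that needs a little care is in the first paragraph, namely that the colored-graph connected sum does represent the topological connected sum (not proved in this excerpt, but standard) and that the color permutation can be chosen to realise the correct orientation when $\mathbb{M}$ and $\mathbb{M}^{\prime}$ are oriented. One could equally well phrase the $k$-simplicity check in purely graph-theoretic terms using Lemma~\ref{Lemma:g=1}, by verifying that $\Gamma_D$ is connected for every $D\subset C$ with $|D|=d-k$; the underlying content is the same, namely that for a contracted $k$-simple complex the full $k$-skeleton coincides with that of a single facet and hence survives, intact and correctly matched up, under the facet identification defining the connected sum.
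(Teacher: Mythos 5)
Your argument is correct, but your main line of reasoning is the geometric dual of what the paper actually does. The paper's proof is two lines on the graph side: by Lemma~\ref{Lemma:g=1}, $k$-simplicity of a contracted complex is equivalent to connectedness of $\Gamma_D$ for every color set $D$ with $|D|=d-k$; since $\Gamma_D$ and $\Gamma'_D$ are connected, so is $(\Gamma\#_{v_1v_2}\Gamma')_D$, and Lemma~\ref{Lemma:g=1} applied again gives $k$-simplicity of the connected sum. (The one step the paper leaves implicit --- that deleting $v_1$, $v_2$ and inserting the $d+1$ new colored edges cannot disconnect $\Gamma_D$ --- is the graph-theoretic counterpart of your observation that the whole $k$-skeleton sits inside the boundary sphere being glued.) You instead work on the face poset of $M(\Gamma)$: using the identity $\operatorname{skel}_k(M(\Gamma_i))=\operatorname{skel}_k(\sigma_{v_i})\subseteq\partial\sigma_{v_i}$ from the proof of Lemma~\ref{Lemma:simple-b}, you show that the $k$-skeleton of the connected sum is exactly the $k$-skeleton of the identified simplex boundary, hence a simplicial complex. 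This is a valid and arguably more illuminating argument, since it makes visible \emph{why} $k$-simplicity survives (the entire $k$-skeleton is concentrated in the gluing locus); the paper's route is shorter and stays entirely in the colored-graph category, which is the natural setting for the rest of Section~\ref{sec:simpCryst}. You correctly flag the only external input, namely the classical fact that the colored-graph connected sum represents the topological connected sum; the paper relies on this tacitly as well. Your closing remark that the check could equally be phrased via Lemma~\ref{Lemma:g=1} is, in fact, precisely the paper's proof.
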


\begin{proof}
	Let $(\Gamma,\gamma)$ and $(\Gamma^{\prime},\gamma^{\prime})$ be $k$-simple crystallizations
	of $\mathbb{M}$ and $\mathbb{M}^{\prime}$ respectively with the same color set $C$. 

	Then by Lemma \ref{Lemma:g=1} both $\Gamma_{D}$ and $\Gamma^{\prime}_{D}$ are connected for all $D \subset C$, $\# D = d-k$.
	Hence, for any pair of vertices $(v_1,v_2) \in (V(\Gamma),V(\Gamma^{\prime}))$,
	$(\Gamma \#_{v_1v_2}\Gamma^{\prime})_{D}$ is connected and by applying Lemma \ref{Lemma:g=1}
	again it follows that $\Gamma \#_{v_1v_2}\Gamma^{\prime}$ is $k$-simple.
\end{proof}

In this article we are interested in simple contracted pseudotriangulations of closed $4$-manifolds
(which, then, are simply connected by construction, cf. Lemma~\ref{Lemma:simple-b}). 
In this special case we have the following.

\begin{lemma} \label{Lemma:f-vector}
	Let $M$ be a simple contracted pseudotriangulation of a closed $4$-manifold, 
	and let $(\Gamma, \gamma)$ be its corresponding crystallization 
	with color set $C = \{0,1,2,3,4\}$. Then
	$g_{ij}=m$ for any $2$-color set $\{i,j\} \subset C$,
	where $6m-4 = f_4 (M)$.
\end{lemma}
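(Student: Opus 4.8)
The plan is to combine two facts: the Dehn--Sommerville-type relations for the colored graph (via $S^3$ sublinks) and the $1$-simplicity assumption, which by Lemma~\ref{Lemma:g=1} forces all three-color subgraphs to be connected. First I would fix notation: let $n = f_4(M) = |V(\Gamma)|$ be the number of vertices of the $5$-colored graph $\Gamma$, and for each $2$-color set $\{i,j\}$ write $g_{ij}$ for the number of components of $\Gamma_{\{i,j\}}$, each of which is an even cycle; similarly $g_{ijk}$ for three-color subgraphs. Since $M$ is simple, Lemma~\ref{Lemma:g=1} with $d=4$, $k=1$ applied to subsets $D$ of size $d-k=3$ gives $g_{ijk}=1$ for every triple of colors, and applied to subsets of size $d-k'$ with $k'$ as large as possible is not needed here. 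The quantities $g_{ij}$ are exactly the $f_0$-data we want to pin down: in fact the simplicial cell complex $M(\Gamma)$ has its edges indexed by the components of three-color subgraphs of the complement, but more directly each $g_{ij}$ equals the number of components of $\Gamma$ after deleting colors $i,j$... let me instead use the standard count.

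The key step is a counting argument inside a single $4$-colored subgraph $\Gamma_{C\setminus\{c\}}$, which is a crystallization of $S^3$ (this is stated in the excerpt: for a simple crystallization of a $4$-manifold, $\Gamma_{C\setminus\{c\}}$ is a crystallization of $S^3$). Apply Proposition~\ref{prop:gagliardi79a} to $\Gamma_{C\setminus\{c\}}$: with four colors, say $\{0,1,2,3\}$ after relabelling, and $n$ vertices, it must satisfy $g_{01}+g_{02}+g_{03} = 2 + n/2$, together with $g_{ij}=g_{kl}$ whenever $\{i,j,k,l\}=\{0,1,2,3\}$. The pairing condition gives $g_{01}=g_{23}$, $g_{02}=g_{13}$, $g_{03}=g_{12}$. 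Now vary the deleted color $c$ over all five colors of $C$. Each such choice gives three equations of the form $g_{ij}=g_{kl}$ (complementary pairs inside the chosen four colors) and one equation $\sum = 2 + n/2$. Running through all five four-element subsets, the ``complementary pair'' equations chain together: $g_{01}=g_{23}$ (deleting $4$), $g_{01}=g_{24}$ (deleting $3$, since $\{0,1\}$ and $\{2,4\}$ are complementary in $\{0,1,2,4\}$), and so on, so that all $\binom{5}{2}=10$ values $g_{ij}$ are forced to be equal; call the common value $m$. Then any of the five summation equations reads $3m = 2 + n/2$, i.e.\ $6m = 4 + n = 4 + f_4(M)$, which is exactly the claim.

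So the proof proceeds: (1) record $n=f_4(M)$ and that each $\Gamma_{C\setminus\{c\}}$ is an $n$-vertex crystallization of $S^3$; (2) apply Gagliardi's criterion (Proposition~\ref{prop:gagliardi79a}) to each of the five $4$-colored subgraphs, extracting the relations $g_{ij}=g_{kl}$ for complementary pairs and the identity $g_{ab}+g_{ac}+g_{ad} = 2 + n/2$; (3) observe that the family of complementary-pair relations, taken over all five subgraphs, connects all ten $g_{ij}$ into a single equivalence class, so $g_{ij}=m$ for all $i,j$; (4) substitute into any summation identity to get $3m = 2 + n/2$, hence $6m-4 = f_4(M)$. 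One small thing to double-check is that the ``complementary pair'' graph on the $10$ two-element subsets of $C$ is indeed connected; concretely, $\{i,j\}$ and $\{i,k\}$ become identified by deleting the fifth color and pairing across $\{i,j,k,l\}$... actually $\{i,j\}$ and $\{k,l\}$ are identified when $\{i,j,k,l\}$ is one of the five $4$-sets, and one checks directly that from $\{0,1\}$ one reaches $\{2,3\},\{2,4\},\{3,4\}$ and then e.g.\ $\{0,2\}$ via $\{3,4\}$'s complement in $\{0,2,3,4\}$, etc.; since every $2$-set is the complement of another $2$-set in four of the five $4$-subsets, connectivity is immediate. The main (and only real) obstacle is making this bookkeeping clean rather than case-heavy; I would phrase it as: for any two disjoint $2$-sets $\{i,j\}$ and $\{k,l\}$ we get $g_{ij}=g_{kl}$ from deleting the remaining color, and any two $2$-sets that meet share a common disjoint partner, so equality propagates everywhere.
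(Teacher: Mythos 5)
Your proposal is correct and follows essentially the same route as the paper: both use simplicity (via Lemma~\ref{Lemma:g=1}) to see that every $4$-color subgraph is a crystallization of $S^3$, then apply Proposition~\ref{prop:gagliardi79a}(i) across all five such subgraphs to equate all ten $g_{ij}$, and finally use Proposition~\ref{prop:gagliardi79a}(ii) to get $3m = n/2+2$. Your explicit check that the complementary-pair relations connect all ten $2$-sets is a detail the paper leaves implicit, but it is the same argument.
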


\begin{proof}
	Let $n=f_4 (M)$. Then $\Gamma$ has $n$ vertices.
	Since $M$ is simple, $\Gamma_{D}$ is connected for any $3$-color set 
	$D \subset C$ by Lemma \ref{Lemma:g=1}. Thus, $\Gamma_{E}$ 
	is a crystallization of $S^3$ for any $4$-color set $E \subset C$. 
	Now, by Proposition \ref{prop:gagliardi79a} (i), we have $m=g_{ij}=g_{kl}$ for 
	any $4$-color set $E = \{i,j,k, l\} \subset C$ and hence $g_{ij}=m$
	for any pair of colors $i,j \in C$. 
	In particular, by Proposition~\ref{prop:gagliardi79a} (ii), we have 
	$3m = g_{01}+g_{02}+g_{03} = n/2 + 2$ since $\Gamma_{\{0,1,2,3\}}$ 
	is a crystallization of $S^3$. This implies $3m =n/2+2$, i.e., $n=6m-4$. 
\end{proof}

\begin{remark}
	\label{rem:DSE}
	In fact, we can even state a more precise connection between the topology of
	a simply connected $4$-manifold and its simple contracted pseudotriangulations.

	Let $M$ be an $n$-facet contracted pseudotriangulation of a simply connected $4$-manifold
	$\mathbb{M}$. By construction, we have $f_0 (M) = 5$ and $\chi (\mathbb{M}) =
	2 + \beta_2 (\mathbb{M};\mathbb{Z})$. Furthermore, the Dehn-Sommerville equations
	in dimension four are as follows:
	$$
	\begin{array}{lllllllllll}
		f_0 (M) &-& f_1(M) &+& f_2(M) &-& f_3(M) &+& f_4(M) &=& \chi(M) \\
		&& 2 f_1(M) &-& 3 f_2(M) &+& 4 f_3(M) &-& 5 f_4(M) &=& 0 \\
		&&&&&& 2 f_3(M) &-& 5 f_4(M) &=& 0. \\
	\end{array}
	$$
	Replacing $f_0(M)$, $f_4 (M)$ and $\chi (M)$ and solving for $f_1$ in the above we get
	$$ 2 f_1 = n + 18 - 6 \beta_2 (\mathbb{M}; \mathbb{Z}). $$
	Now it immediately follows that $M$ is simple (that is, $f_1 = 10$) 
	if and only if $6 \beta_2 (\mathbb{M}; \mathbb{Z}) + 2 = f_4 (M)$. Furthermore, in this
	case we have $m = \beta_2 (\mathbb{M}; \mathbb{Z}) + 1$ due to Lemma~\ref{Lemma:f-vector}.

%
	In particular, we have for the number of vertices (facets) of simple crystallizations (simple contracted 
	pseudotriangulations) of the following manifolds:

	\begin{center}
	  \begin{tabular}{|l|c|c|}
		\hline
		Manifold&$m$&\# vertices / facets \\
		\hline
		\hline
		$S^4$&$1$&$2$ \\
		\hline
		$\mathbb{CP}^2$&$2$&$8$ \\
		\hline
		$S^2 \times S^2$&$3$&$14$ \\
		\hline
		$K3$&$23$&$134$ \\
		\hline
	  \end{tabular}
	\end{center}
\end{remark}

\section{The unique simple crystallizations of \boldmath{$S^4$} and \boldmath{$\mathbb{CP}^2$}}
\label{eg:cp2}

\subsection{The standard crystallization of $S^4$}
\label{ssec:s4}

The standard $2$-facet contracted pseudotriangulation of $S^4$ is given by gluing two 
$4$-dimensional simplices together along their
boundaries. The resulting complex is clearly a pseudotriangulation of $S^4$. Moreover, it has five vertices, 
ten edges, and is thus simple. It's crystallization is a $2$-vertex graph $\Gamma^0$ with five edges 
between two vertices and thus $\Gamma^0$ has a natural $5$-coloring (cf. Figure~\ref{fig:S4}).

\begin{figure}[ht]
\tikzstyle{vert}=[circle, draw, fill=black!100, inner sep=0pt, minimum width=6pt]
\tikzstyle{vertex}=[circle, draw, fill=black!00, inner sep=0pt, minimum width=6pt]
\tikzstyle{ver}=[]
\tikzstyle{edge} = [draw,thick,-]
\centering
\begin{tikzpicture}[scale=0.25]
\node[ver, inner sep=0pt, label={$0$}] (1) at (0,-4){};
\node[ver, inner sep=0pt, label={$1$}] (2) at (0,-2){};
\node[ver, inner sep=0pt, label={$2$}] (3) at (0,0){};
\node[ver, inner sep=0pt, label={$3$}] (4) at (0,2){};
\node[ver, inner sep=0pt, label={$4$}] (5) at (0,4){};
\node[vertex, label=left:{}] (10) at (-10,0){};
\node[vert, label=right:{}] (11) at (10,0){};

\draw[edge] plot [smooth,tension=1] coordinates{(10)(1)(11)};
\draw [line width=2pt, line cap=rectengle, dash pattern=on 1pt off 1] plot [smooth,tension=1] coordinates{(10)(1)(11)};
\draw[edge] plot [smooth,tension=1] coordinates{(10)(2)(11)};
\draw [line width=3pt, line cap=round, dash pattern=on 0pt off 2\pgflinewidth] plot [smooth,tension=1] coordinates{(10)(2)(11)};
\draw[edge] plot [smooth,tension=1] coordinates{(10)(11)};
\draw[edge, dotted] plot [smooth,tension=1] coordinates{(10)(4)(11)};
\draw[edge, dashed] plot [smooth,tension=1] coordinates{(10)(5)(11)};
\end{tikzpicture}

\caption{The standard crystallization of $S^4$.}
\label{fig:S4}
\end{figure}
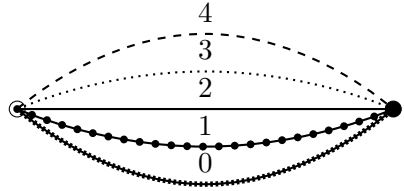

Note that the standard contracted pseudotriangulation of $S^4$ is the union of two standard $4$-balls glued together
along the boundaries of two $4$-simplexes. Hence, the 
standard contracted pseudotriangulation of $S^4$ is PL-homeomorphic to $S^4$ with standard PL structure. Moreover,
it is unique since there is no other $5$-colorable $5$-valent multi graph with only two vertices.
Hence, a version of Proposition~\ref{prop:pezzana74} for simple crystallizations of the $4$-sphere would
proof the Smooth Poincar\'e conjecture.

\subsection{The standard crystallization of \boldmath{$\mathbb{CP}^2$}}
\label{ssec:cp2}

The following example of a simple crystallization of $\mathbb{CP}^2$ first appeared in
\cite{Gagliardo89CP2}. 

Let $(\Gamma^1, \gamma^1)$ be the contracted $5$-colored graph with
color set $C=\{0,1,2,3,4\}$ given in Figure \ref{fig:G1} and let $M_1$ be 
its corresponding contracted pseudotriangulation. 
Since for every $2$-color subgraph $\Gamma^1_{\{i_1,i_2 \}}$, $\{i_1,i_2\} \subset C$, 
we have $g_{i_1 i_2}=2$ it follows from Proposition \ref{prop:gagliardi79a}
that $(\Gamma^1|_{C \setminus \{c\}}, \gamma^1|_{(\gamma^1)^{-1}(C \setminus \{c\})})$ is a crystallization 
of a $3$-manifold $\mathbb{M}_1^{(c)}$ for all $c \in C$. By Proposition \ref{prop:gagliardi79b} it is easy to calculate that
$\pi_1(\mathbb{M}_1^{(c)}, x) = \{0\}$, $x \in \mathbb{M}_1^{(c)}$, for all $c\in C$. 
Hence, due to Perelman's theorem \cite{Perelman07PC} 
$\mathbb{M}_1^{(c)} \cong S^3$ for all $c \in C$. Thus $(\Gamma^1, \gamma^1)$ is a crystallization and $M_1$ 
is a pseudotriangulation of a $4$-manifold $\mathbb{M}_1$. 
Since, in addition, any $3$-color subgraph of $(\Gamma^1, \gamma^1)$ is connected we get by Lemma \ref{Lemma:g=1} that
$M_1$ is a simple contracted pseudotriangulation and hence $\mathbb{M}_1$ is simply connected.
Furthermore, since $g_{i_1 i_2}=m=2$ the intersection form of
$\mathbb{M}_1$ has rank one, and by Freedman's classification theorem (cf. Theorem~\ref{Freedman1}) 
we know that  $\mathbb{M}_1$ must be homeomorphic to $\mathbb{CP}^2$.

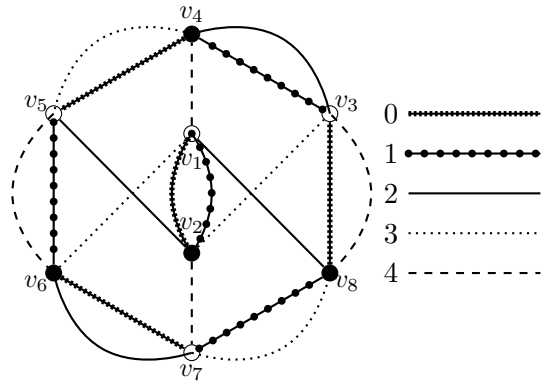
\begin{figure}[ht]
\tikzstyle{vert}=[circle, draw, fill=black!100, inner sep=0pt, minimum width=6pt]
\tikzstyle{vertex}=[circle, draw, fill=black!00, inner sep=0pt, minimum width=6pt]
\tikzstyle{ver}=[]
\tikzstyle{extra}=[circle, draw, fill=black!50, inner sep=0pt, minimum width=2pt]
\tikzstyle{edge} = [draw,thick,-]
\centering
\begin{tikzpicture}[scale=0.53]
\begin{scope}[rotate=30]
\foreach \x/\y in {0/3,120/5,240/7}{
\node[ver] (\y) at (\x:4.5){\small{$v_{\y}$}};
\node[vertex] (\y) at (\x:4){};}

\foreach \x/\y in {60/4,180/6,300/8}{
\node[ver] (\y) at (\x:4.5){\small{$v_{\y}$}};
\node[vert] (\y) at (\x:4) {};}

\foreach \x/\y in {3/4,5/6,7/8,8/3,4/5,6/7}{
\path[edge] (\x) -- (\y);}
\foreach \x/\y in {3/4,5/6,7/8}{
\draw [line width=3pt, line cap=round, dash pattern=on 0pt off 2\pgflinewidth] 	(\x) -- (\y);
}
\foreach \x/\y in {8/3,4/5,6/7}{ 
\draw[line width=2pt, line cap=rectengle, dash pattern=on 1pt off 1]  (\x) -- (\y);}
\end{scope}
\begin{scope}
\foreach \x/\y in {0/9,120/11,240/13}{
\node[ver] (\y) at (\x:4.5){};}
\foreach \x/\y in {60/10,180/12,300/14}{
\node[ver] (\y) at (\x:4.5) {};}
\node[vertex] (1) at (0,1.5){};
\node[vert] (2) at (0,-1.5){};
\node[ver] () at (0,0.8){\small{$v_1$}};
\node[ver] () at (0,-0.8){\small{$v_2$}};
\end{scope}
\foreach \x/\y/\z in {3/4/10,6/7/13}{
\draw[edge] plot [smooth,tension=1] coordinates{(\x)(\z)(\y)};}
\foreach \x/\y/\z in {4/5/11,7/8/14}{
\draw[edge, dotted] plot [smooth,tension=1] coordinates{(\x)(\z)(\y)};}
\foreach \x/\y/\z in {5/6/12,8/3/9}{
\draw[edge, dashed] plot [smooth,tension=1] coordinates{(\x)(\z)(\y)};}
\draw[edge] plot [smooth,tension=1] coordinates{(1)(-0.5,0)(2)};
\draw[edge] plot [smooth,tension=1] coordinates{(1)(0.5,0)(2)};
\draw[line width=2pt, line cap=rectengle, dash pattern=on 1pt off 1] plot [smooth,tension=1] coordinates{(1)(-0.5,0)(2)};
\draw[line width=3pt, line cap=round, dash pattern=on 0pt off 2\pgflinewidth] plot [smooth,tension=1] coordinates{(1)(0.5,0)(2)};
\path[edge, dashed] (1) -- (4);
\path[edge, dotted] (1) -- (6);
\path[edge] (1) -- (8);
\path[edge, dotted] (2) -- (3);
\path[edge] (2) -- (5);
\path[edge, dashed] (2) -- (7);
\begin{scope}[shift={(7,2)}]
\node[ver] (300) at (-2,0){$0$};
\node[ver] (301) at (-2,-1){$1$};
\node[ver] (302) at (-2,-2){$2$};
\node[ver] (303) at (-2,-3){$3$};
\node[ver] (304) at (-2,-4){$4$};
\node[ver] (305) at (2,0){};
\node[ver] (306) at (2,-1){};
\node[ver] (307) at (2,-2){};
\node[ver] (308) at (2,-3){};
\node[ver] (309) at (2,-4){};

\path[edge] (300) -- (305);
\draw [line width=2pt, line cap=rectengle, dash pattern=on 1pt off 1]  (300) -- (305);
\path[edge] (301) -- (306);
\draw [line width=3pt, line cap=round, dash pattern=on 0pt off 2\pgflinewidth]  (301) -- (306);
\path[edge] (302) -- (307);
\path[edge, dotted] (303) -- (308);
\path[edge, dashed] (304) -- (309);
\end{scope}
\end{tikzpicture}
\caption{A simple crystallization of $\mathbb{CP}^2$.}
\label{fig:G1}
\end{figure}

\begin{remark}
	There is another elegant way to construct $(\Gamma^1, \gamma^1)$
	(cf. \cite{Chiavacci93LinkingMinTrig}, where a similar construction in
	terms of cancelling dipoles is presented): 
	Given the unique $3$-neighborly $9$-vertex triangulation of the
	(standard PL) complex projective plane \cite{Kuehnel83The9VertComplProjPlane}, 
	there is a sequence of four edge contractions
	transforming it to $(\Gamma^1, \gamma^1)$. This is the minimum number of elementary moves necessary 
	to pass from a $9$-vertex combinatorial manifold to a $5$-vertex simple contracted pseudotriangulation. 
	In particular, this shows that $(\Gamma^1, \gamma^1)$ is of standard PL type.
	The facet lists of the five pseudotriangulations are available from the authors upon request.

\end{remark}

\subsection{Uniqueness of the simple crystallization of \boldmath{$\mathbb{CP}^2$}}

From the classification of crystallizations of $4$-manifolds 
\cite{Casali12Crystallizations,CasaliDUKEIII,Casali14Cataloguing} we get that there is
exactly one simple crystallization with eight vertices. In particular, it follows
that there is exactly one simple crystallization of $\mathbb{CP}^2$. In this section we 
give an elementary proof of the uniqueness of $(\Gamma^1, \gamma^1)$ independent
of the classification of crystallizations of $3$- and $4$-manifolds.

\newpage

\begin{theo}
	\label{thm:unique}
	Up to isomorphy, $(\Gamma^1, \gamma^1)$ is the only simple crystallization of $\mathbb{CP}^2$.
\end{theo}

\begin{proof}
Let $(\Gamma, \gamma)$ be a simple crystallization of $\mathbb{CP}^2$ with color set $C=\{0,1,2,3,4\}$. Then
$\Gamma$ must have eight vertices, $g_{i j}=2$ for $\{i,j\} \subset C$, 
and $(\Gamma|_{C \setminus \{c\}}, \gamma|_{(\gamma^1)^{-1}(C \setminus \{c\})})$ 
is a crystallization of $S^3$ for all $c \in C$. Since $\Gamma$ is simple 
we have $g_{ijk}=1$ for $\{i,j,k\} \subset C$ by Lemma~\ref{Lemma:g=1} 
and hence $\Gamma$ can not contain a triple edge (Otherwise, 
let the three edges between two vertices be colored by $i$, $j$ and $k$. Then the triple edge
on its own must be a connected component of $\Gamma_{\{ i,j,k \}}$ and we have 
$g_{ijk}>1$ whenever $\Gamma$ has more than two vertices). Furthermore, since $\mathbb{CP}^2$ 
is orientable, $\Gamma$ must be bipartite by Proposition~\ref{prop:ca-ga-pe80}. In particular,
$\Gamma$ can not have any cycles of odd length. It follows that $\Gamma_{\{ i,j\}}$, $\{i,j\} \subset C$,
must be of the form $C_2 \sqcup C_6$ or $C_4 \sqcup C_4$.

In the following we will use this fact to prove that there exist precisely three crystallizations 
of $S^3$ which can occur as a $4$-color subgraph of a simple crystallization of $\mathbb{CP}^2$.
The theorem then follows from the fact that there is a unique $5$-colored graph such that 
all of its $4$-color subgraphs are isomorphic to one of these three crystallizations.

\begin{figure}[ht]
\tikzstyle{vert}=[circle, draw, fill=black!100, inner sep=0pt, minimum width=4pt]
\tikzstyle{vertex}=[circle, draw, fill=black!00, inner sep=0pt, minimum width=4pt]
\tikzstyle{ver}=[]
\tikzstyle{extra}=[circle, draw, fill=black!50, inner sep=0pt, minimum width=2pt]
\tikzstyle{edge} = [draw,thick,-]

\centering
\begin{tikzpicture}[scale=0.37]

\begin{scope}[shift={(-10,7)}]
\node[vertex] (a1) at (-1.5,-1.5){}; 
\node[vert] (a4) at (-1.5,1.5){}; 
\node[vert] (a2) at (1.5,-1.5){};
\node[vertex] (a3) at (1.5,1.5){}; 

\node[vert] (b1) at (-4,-4){}; 
\node[vertex] (b4) at (-4,4){};
\node[vertex] (b2) at (4,-4){}; 
\node[vert] (b3) at (4,4){};

\foreach \x/\y in {a1/a2,a2/a3,a3/a4,a4/a1,b1/b2,b2/b3,b3/b4,b4/b1}{ \path[edge] (\x) -- (\y);} 
\foreach \x/\y in {a1/a2,a3/a4,b1/b2,b4/b3}{ 
\draw [line width=2pt, line cap=rectengle, dash pattern=on 1pt off 1] (\x) -- (\y);}
\foreach \x/\y in {a3/a2,a1/a4,b3/b2,b1/b4}{ 
\draw [line width=3pt, line cap=round, dash pattern=on 0pt off 2\pgflinewidth]   (\x) -- (\y);} 
\foreach \x/\y in {a1/b1,a2/b2,a3/b3,a4/b4}{ 
\path[edge] (\x) -- (\y);} 
\draw[edge, dotted] plot [smooth,tension=0.5] coordinates{(a1)(2.5,-2.5)(b3)}; 
\draw[edge, dotted] plot [smooth,tension=0.5] coordinates{(a2)(2.5,2.5)(b4)}; 
\draw[edge, dotted] plot [smooth,tension=0.5] coordinates{(a3)(-2.5,2.5)(b1)};
\draw[edge, dotted] plot [smooth,tension=0.5] coordinates{(a4)(-2.5,-2.5)(b2)}; 
\node[ver] () at (-0.8,-0.8){$a_1$}; 
\node[ver] () at (-0.8,0.8){$a_4$}; 
\node[ver] () at (0.8,-0.8){$a_2$}; 
\node[ver] () at (0.8,0.8){$a_3$}; 
\node[ver] () at (-4.5,-4.7){$b_1$}; 
\node[ver] () at (-4.5,4.7){$b_4$}; 
\node[ver] () at (4.5,-4.7){$b_2$}; 
\node[ver] () at (4.5,4.7){$b_3$};
\node[ver] (208) at (0,-6){$(a)$ Crystallization of $\mathbb{RP}^3$};
\end{scope}

\begin{scope}[shift={(1,7)}]
\node[vertex] (107) at (-2.5,-4){};
\node[vertex] (105) at (-2.5,1.3){};
\node[vertex] (103) at (2.5,-1.3){};
\node[vertex] (102) at (2.5,4){};

\node[vert] (108) at (2.5,-4){};
\node[vert] (104) at (2.5,1.3){};
\node[vert] (106) at (-2.5,-1.3){};
\node[vert] (101) at (-2.5,4){};

\path[edge] (101) -- (105);
\path[edge] (102) -- (104);
\path[edge] (103) -- (106);

\path[edge] (106) -- (105);
\draw [line width=3pt, line cap=round, dash pattern=on 0pt off 2\pgflinewidth] (106) -- (105);
\path[edge] (103) -- (104);
\draw [line width=3pt, line cap=round, dash pattern=on 0pt off 2\pgflinewidth] (103) -- (104);

\path[edge] (108) -- (103);
\draw [line width=2pt, line cap=rectengle, dash pattern=on 1pt off 1] (103) -- (108);
\path[edge] (107) -- (106);
\draw [line width=2pt, line cap=rectengle, dash pattern=on 1pt off 1] (106) -- (107);
\path[edge] (104) -- (105);
\draw [line width=2pt, line cap=rectengle, dash pattern=on 1pt off 1] (104) -- (105);

\draw [line width=2pt, line cap=rectengle, dash pattern=on 1pt off 1] plot [smooth,tension=1.5] coordinates{(101)(0,4.4)(102)};
\draw [line width=3pt, line cap=round, dash pattern=on 0pt off 2\pgflinewidth] plot [smooth,tension=1.5] coordinates{(101)(0,3.6)(102)};
\draw[edge] plot [smooth,tension=1.5] coordinates{(101)(0,4.4)(102)};
\draw[edge] plot [smooth,tension=1.5] coordinates{(101)(0,3.6)(102)};

\draw [line width=3pt, line cap=round, dash pattern=on 0pt off 2\pgflinewidth] plot [smooth,tension=1.5] coordinates{(107)(0,-4.4)(108)};
\draw[edge] plot [smooth,tension=1.5] coordinates{(107)(0,-4.4)(108)};
\draw[edge] plot [smooth,tension=1.5] coordinates{(107)(0,-3.6)(108)};

\node[ver] (200) at (3.3,-1.3){$v_3$};
\node[ver] (201) at (3.3,1.3){$v_4$};
\node[ver] (202) at (3.3,4.5){$v_2$};
\node[ver] (203) at (-3.3,4.5){$v_1$};
\node[ver] (204) at (-3.3,1.3){$v_5$};
\node[ver] (205) at (-3.3,-1.3){$v_6$};
\node[ver] (206) at (-3.3,-4.8){$v_7$};
\node[ver] (207) at (3.3,-4.8){$v_8$};
\node[ver] (208) at (0,-6){$(b)$ The graph $\mathcal{J}_1$};
\end{scope}

\begin{scope}[shift={(12,7)}]
\node[vertex] (107) at (-2,-4){};
\node[vertex] (105) at (-2,4){};
\node[vertex] (103) at (4.5,0){};
\node[vertex] (102) at (1,-2){};

\node[vert] (108) at (2,-4){};
\node[vert] (104) at (2,4){};
\node[vert] (106) at (-4.5,0){};
\node[vert] (101) at (-1,2){};

\path[edge] (101) -- (105);
\path[edge] (102) -- (108);

\draw[edge] (107) -- (108);

\draw [line width=3pt, line cap=round, dash pattern=on 0pt off 2\pgflinewidth] (107) -- (108);
\path[edge] plot [smooth,tension=1.5] coordinates{(101)(0.3,0.3)(102)};
\path[edge] plot [smooth,tension=1.5] coordinates{(101)(-0.3,-0.3)(102)};

\path[edge] (106) -- (105);
\draw [line width=3pt, line cap=round, dash pattern=on 0pt off 2\pgflinewidth] (106) -- (105);

\path[edge] (108) -- (103);
\draw [line width=2pt, line cap=rectengle, dash pattern=on 1pt off 1] (103) -- (108);
\path[edge] (104) -- (105);
\draw [line width=2pt, line cap=rectengle, dash pattern=on 1pt off 1] (104) -- (105);

\draw [line width=2pt, line cap=rectengle, dash pattern=on 1pt off 1] plot [smooth,tension=1.5] coordinates{(101)(0.3,0.3)(102)};
\draw [line width=3pt, line cap=round, dash pattern=on 0pt off 2\pgflinewidth] plot [smooth,tension=1.5] coordinates{(101)(-0.3,-0.3)(102)};

\path[edge] plot [smooth,tension=1.5] coordinates{(106)(-2.95,-1.7)(107)};
\path[edge] plot [smooth,tension=1.5] coordinates{(106)(-3.55,-2.3)(107)};
\draw [line width=2pt, line cap=rectengle, dash pattern=on 1pt off 1] plot [smooth,tension=1.5] coordinates{(106)(-3.55,-2.3)(107)};

\path[edge] plot [smooth,tension=1.5] coordinates{(103)(3.55,2.3)(104)};
\draw [line width=3pt, line cap=round, dash pattern=on 0pt off 2\pgflinewidth] plot [smooth,tension=1.5] coordinates{(103)(3.55,2.3)(104)};
\path[edge] plot [smooth,tension=1.5] coordinates{(103)(2.95,1.7)(104)};

\node[ver] (200) at (5.5,0){$v_3$};
\node[ver] (201) at (3,4.5){$v_4$};
\node[ver] (202) at (1.8,-2.2){$v_2$};
\node[ver] (203) at (-1.8,2.2){$v_1$};
\node[ver] (204) at (-3,4.5){$v_5$};
\node[ver] (205) at (-5.5,0){$v_6$};
\node[ver] (206) at (-3,-4.8){$v_7$};
\node[ver] (207) at (3,-4.8){$v_8$};
\node[ver] (208) at (0,-6){$(c)$ The graph $\mathcal{J}_2$};
\end{scope}

\begin{scope}[shift={(22,5)}]
\node[ver] (308) at (-3,5){$0$};
\node[ver] (300) at (-3,3){$1$};
\node[ver] (301) at (-3,1){$2$};
\node[ver] (302) at (-3,-1){$3$};
\node[ver] (309) at (3,5){};
\node[ver] (304) at (3,3){};
\node[ver] (305) at (3,1){};
\node[ver] (306) at (3,-1){};
\path[edge] (300) -- (304);
\path[edge] (308) -- (309);
\draw [line width=2pt, line cap=rectengle, dash pattern=on 1pt off 1]  (308) -- (309);
\draw [line width=3pt, line cap=round, dash pattern=on 0pt off 2\pgflinewidth]  (300) -- (304);
\path[edge] (301) -- (305);
\path[edge, dotted] (302) -- (306);
\end{scope}

\begin{scope}[shift={(-10,-7)}]
\node[vertex] (107) at (-2.5,-4){};
\node[vertex] (105) at (-2.5,1.3){};
\node[vertex] (103) at (2.5,-1.3){};
\node[vertex] (102) at (2.5,4){};

\node[vert] (108) at (2.5,-4){};
\node[vert] (104) at (2.5,1.3){};
\node[vert] (106) at (-2.5,-1.3){};
\node[vert] (101) at (-2.5,4){};

\path[edge] (101) -- (105);
\path[edge] (102) -- (104);
\path[edge] (103) -- (106);

\path[edge] (106) -- (105);
\draw [line width=3pt, line cap=round, dash pattern=on 0pt off 2\pgflinewidth] (106) -- (105);
\path[edge] (103) -- (104);
\draw [line width=3pt, line cap=round, dash pattern=on 0pt off 2\pgflinewidth] (103) -- (104);

\path[edge] (108) -- (103);
\draw [line width=2pt, line cap=rectengle, dash pattern=on 1pt off 1] (103) -- (108);
\path[edge] (107) -- (106);
\draw [line width=2pt, line cap=rectengle, dash pattern=on 1pt off 1] (106) -- (107);
\path[edge] (104) -- (105);
\draw [line width=2pt, line cap=rectengle, dash pattern=on 1pt off 1] (104) -- (105);

\draw [line width=2pt, line cap=rectengle, dash pattern=on 1pt off 1] plot [smooth,tension=1.5] coordinates{(101)(0,4.4)(102)};
\draw [line width=3pt, line cap=round, dash pattern=on 0pt off 2\pgflinewidth] plot [smooth,tension=1.5] coordinates{(101)(0,3.6)(102)};
\draw[edge] plot [smooth,tension=1.5] coordinates{(101)(0,4.4)(102)};
\draw[edge] plot [smooth,tension=1.5] coordinates{(101)(0,3.6)(102)};

\draw [line width=3pt, line cap=round, dash pattern=on 0pt off 2\pgflinewidth] plot [smooth,tension=1.5] coordinates{(107)(0,-4.4)(108)};
\draw[edge] plot [smooth,tension=1.5] coordinates{(107)(0,-4.4)(108)};
\draw[edge] plot [smooth,tension=1.5] coordinates{(107)(0,-3.6)(108)};

\node[ver] (200) at (3.3,-1.3){$v_3$};
\node[ver] (201) at (3.3,1.3){$v_4$};
\node[ver] (202) at (3.3,4.5){$v_2$};
\node[ver] (203) at (-3.3,4.5){$v_1$};
\node[ver] (204) at (-3.3,1.3){$v_5$};
\node[ver] (205) at (-3.3,-1.3){$v_6$};
\node[ver] (206) at (-3.3,-4.8){$v_7$};
\node[ver] (207) at (3.3,-4.8){$v_8$};

\draw[edge, dotted] plot [smooth,tension=1.5] coordinates{(101)(4,5)(103)};
\draw[edge, dotted] plot [smooth,tension=1.5] coordinates{(102)(3.2,2.6)(104)};
\draw[edge, dotted] plot [smooth,tension=1] coordinates{(105)(-4,-5)(108)};
\draw[edge, dotted] plot [smooth,tension=1] coordinates{(106)(-3.2,-2.6)(107)};
\node[ver] (108) at (0,-6){$(d)$ Crystallization $\mathcal{G}_1$};
\end{scope}

\begin{scope}[shift={(2.5,-7)}]
\node[vertex] (107) at (-2.5,-4){};
\node[vertex] (105) at (-2.5,1.3){};
\node[vertex] (103) at (2.5,-1.3){};
\node[vertex] (102) at (2.5,4){};

\node[vert] (108) at (2.5,-4){};
\node[vert] (104) at (2.5,1.3){};
\node[vert] (106) at (-2.5,-1.3){};
\node[vert] (101) at (-2.5,4){};

\path[edge] (101) -- (105);
\path[edge] (102) -- (104);
\path[edge] (103) -- (106);

\path[edge] (106) -- (105);
\draw [line width=3pt, line cap=round, dash pattern=on 0pt off 2\pgflinewidth] (106) -- (105);
\path[edge] (103) -- (104);
\draw [line width=3pt, line cap=round, dash pattern=on 0pt off 2\pgflinewidth] (103) -- (104);

\path[edge] (108) -- (103);
\draw [line width=2pt, line cap=rectengle, dash pattern=on 1pt off 1] (103) -- (108);
\path[edge] (107) -- (106);
\draw [line width=2pt, line cap=rectengle, dash pattern=on 1pt off 1] (106) -- (107);
\path[edge] (104) -- (105);
\draw [line width=2pt, line cap=rectengle, dash pattern=on 1pt off 1] (104) -- (105);

\draw [line width=2pt, line cap=rectengle, dash pattern=on 1pt off 1] plot [smooth,tension=1.5] coordinates{(101)(0,4.4)(102)};
\draw [line width=3pt, line cap=round, dash pattern=on 0pt off 2\pgflinewidth] plot [smooth,tension=1.5] coordinates{(101)(0,3.6)(102)};
\draw[edge] plot [smooth,tension=1.5] coordinates{(101)(0,4.4)(102)};
\draw[edge] plot [smooth,tension=1.5] coordinates{(101)(0,3.6)(102)};

\draw [line width=3pt, line cap=round, dash pattern=on 0pt off 2\pgflinewidth] plot [smooth,tension=1.5] coordinates{(107)(0,-4.4)(108)};
\draw[edge] plot [smooth,tension=1.5] coordinates{(107)(0,-4.4)(108)};
\draw[edge] plot [smooth,tension=1.5] coordinates{(107)(0,-3.6)(108)};

\node[ver] (200) at (3.3,-1.3){$v_3$};
\node[ver] (201) at (3.3,1.3){$v_4$};
\node[ver] (202) at (3.3,4.5){$v_2$};
\node[ver] (203) at (-3.3,4.5){$v_1$};
\node[ver] (204) at (-3.3,1.3){$v_5$};
\node[ver] (205) at (-3.3,-1.3){$v_6$};
\node[ver] (206) at (-3.3,-4.8){$v_7$};
\node[ver] (207) at (3.3,-4.8){$v_8$};
\draw[edge, dotted] plot [smooth,tension=1.5] coordinates{(101)(4.5,5)(103)};
\draw[edge, dotted] plot [smooth,tension=1.5] coordinates{(102)(4,0)(108)};
\draw[edge, dotted] plot [smooth,tension=1] coordinates{(104)(4.5,-5)(107)};
\draw[edge, dotted] plot [smooth,tension=1] coordinates{(105)(-3.2,0)(106)};
\node[ver] (208) at (0,-6){$(e)$ Crystallization $\mathcal{G}_2$};
\end{scope}

\begin{scope}[shift={(16,-7)}]
\node[vertex] (107) at (-2.5,-4){};
\node[vertex] (105) at (-2.5,1.3){};
\node[vertex] (103) at (2.5,-1.3){};
\node[vertex] (102) at (2.5,4){};

\node[vert] (108) at (2.5,-4){};
\node[vert] (104) at (2.5,1.3){};
\node[vert] (106) at (-2.5,-1.3){};
\node[vert] (101) at (-2.5,4){};

\path[edge] (101) -- (105);
\path[edge] (102) -- (104);
\path[edge] (103) -- (106);

\path[edge] (106) -- (105);
\draw [line width=3pt, line cap=round, dash pattern=on 0pt off 2\pgflinewidth] (106) -- (105);
\path[edge] (103) -- (104);
\draw [line width=3pt, line cap=round, dash pattern=on 0pt off 2\pgflinewidth] (103) -- (104);

\path[edge] (108) -- (103);
\draw [line width=2pt, line cap=rectengle, dash pattern=on 1pt off 1] (103) -- (108);
\path[edge] (107) -- (106);
\draw [line width=2pt, line cap=rectengle, dash pattern=on 1pt off 1] (106) -- (107);
\path[edge] (104) -- (105);
\draw [line width=2pt, line cap=rectengle, dash pattern=on 1pt off 1] (104) -- (105);

\draw [line width=2pt, line cap=rectengle, dash pattern=on 1pt off 1] plot [smooth,tension=1.5] coordinates{(101)(0,4.4)(102)};
\draw [line width=3pt, line cap=round, dash pattern=on 0pt off 2\pgflinewidth] plot [smooth,tension=1.5] coordinates{(101)(0,3.6)(102)};
\draw[edge] plot [smooth,tension=1.5] coordinates{(101)(0,4.4)(102)};
\draw[edge] plot [smooth,tension=1.5] coordinates{(101)(0,3.6)(102)};

\draw [line width=3pt, line cap=round, dash pattern=on 0pt off 2\pgflinewidth] plot [smooth,tension=1.5] coordinates{(107)(0,-4.4)(108)};
\draw[edge] plot [smooth,tension=1.5] coordinates{(107)(0,-4.4)(108)};
\draw[edge] plot [smooth,tension=1.5] coordinates{(107)(0,-3.6)(108)};

\node[ver] (200) at (3.3,-1.3){$v_3$};
\node[ver] (201) at (3.3,1.3){$v_4$};
\node[ver] (202) at (3.3,4.5){$v_2$};
\node[ver] (203) at (-3.3,4.5){$v_1$};
\node[ver] (204) at (-3.3,1.3){$v_5$};
\node[ver] (205) at (-3.3,-1.3){$v_6$};
\node[ver] (206) at (-3.3,-4.8){$v_7$};
\node[ver] (207) at (3.3,-4.8){$v_8$};
\draw[edge, dotted] plot [smooth,tension=1.5] coordinates{(101)(-3.2,2.6)(105)};
\draw[edge, dotted] plot [smooth,tension=1.5] coordinates{(103)(3.2,0)(104)};
\draw[edge, dotted] plot [smooth,tension=1] coordinates{(106)(-3.2,-2.6)(107)};
\draw[edge, dotted] plot [smooth,tension=1] coordinates{(102)(4.5,0)(108)};
\node[ver] (208) at (0,-6){$(f)$ Crystallization $\mathcal{G}_3$};
\end{scope}

\end{tikzpicture}
\caption{The graphs $\mathcal{J}_i$, $1\leq i \leq 2$, and $\mathcal{G}_j$, $1\leq i \leq 3$. Note that all graphs
are bipartite since all edges go between vertices of type $\bullet$ and vertices of type $\circ$.}
\label{fig:S3inCP2}
\end{figure}
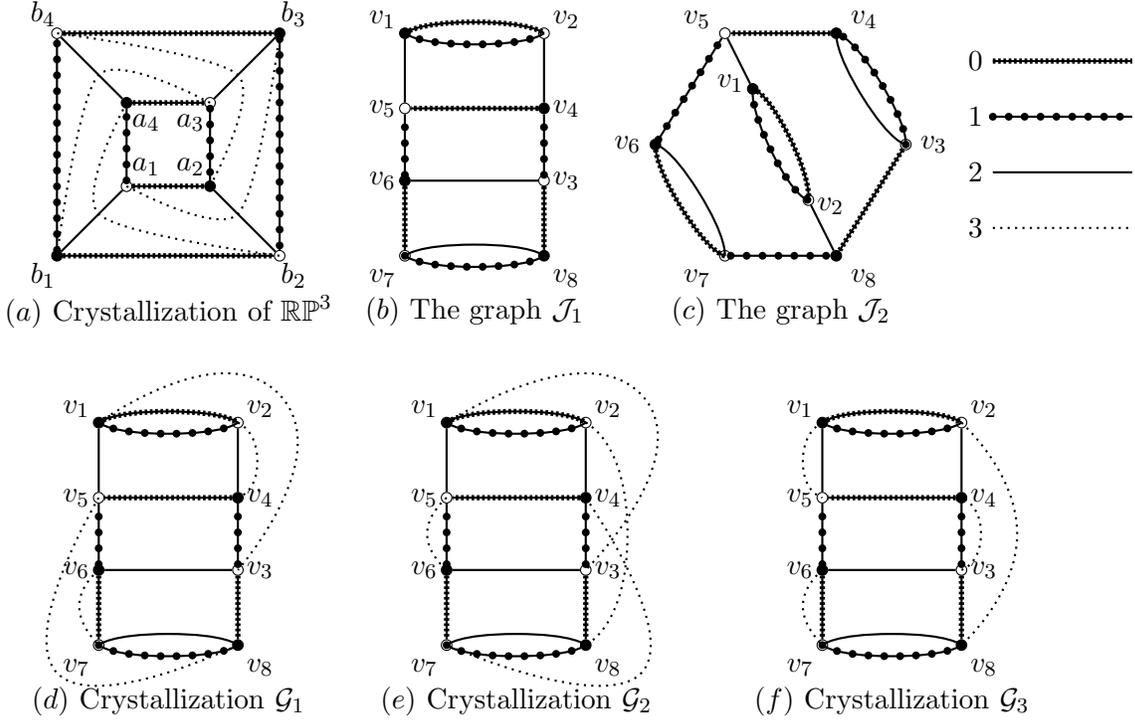

\bigskip
Let $\Gamma_{\{i,j\}}$ be of the form  $C_4 \sqcup C_4$ for all $\{ i,j \} \subset C$. 
That is, in particular, 
$\Gamma_{\{0,1\}}= C_4(a_1,a_2,a_3,a_4) \sqcup C_4(b_1,b_2,b_3,b_4)$ 
as in Figure \ref{fig:S3inCP2} $(a)$. Since $\Gamma_{\{0,1,2\}}$ must be
connected, without loss of generality let $a_1b_1 \in \gamma^{-1}(2)$. 
Since by assumption $\Gamma_{\{0,2\}}$ and $\Gamma_{\{1,2\}}$ are of 
the form $C_4 \sqcup C_4$ it follows (up to isomorphism) that $a_2b_2, a_3b_3, a_4b_4 \in \gamma^{-1}(2)$.
Now, since $\Gamma_{\{0,1,3\}}$ is connected, bipartite and $\Gamma$ does not contain a 
$2$-cycle, $a_1b_3, a_2b_4, a_3b_1, a_4b_2 \in \gamma^{-1}(3)$ (see Figure
\ref{fig:S3inCP2} ($a$)). By applying Proposition~\ref{prop:gagliardi79b}
we get that $\pi_1(|M(\Gamma_{\{0,1,2,3\}})|,x) \cong \mathbb{Z}_2$ 
and hence $\Gamma_{\{0,1,2,3\}}$ can not be a crystallization of $S^3$. 
Hence there exist $\{ i, j \} \subset C$ such that $\Gamma_{\{i,j\}}$ 
is of the form $C_2 \sqcup C_6$. Without loss of generality,
let $\Gamma_{\{0,1\}}= C_2(v_1,v_2) \sqcup C_6(v_3,v_4,v_5,v_6,v_7, v_8)$ 
as in Figure \ref{fig:S3inCP2} $(b)$, and since $\Gamma_{\{0,1,2\}}$
is connected, let $v_1v_5 \in \gamma^{-1}(2)$. 
Then either $v_2v_4 \in \gamma^{-1}(2)$ or $v_2v_8 \in \gamma^{-1}(2)$
(up to isomorphism). 

\bigskip
\noindent
{\bf Case $v_2v_4 \in \gamma^{-1}(2)$}: In this case we have $v_3v_6, v_7v_8 \in \gamma^{-1}(2)$ 
since $\Gamma_{\{1,2\}}$ has two connected components. We will denote this graph by 
$\mathcal{J}_1$ (see Figure \ref{fig:S3inCP2} $(b)$).

Since $\Gamma$ is bipartite and does not contain a triple edge, there are three possibilities
to add an edges of color $3$ to $\mathcal{J}_1$ at vertex $v_1$. 
\begin{enumerate}[(i)]
	\item Let $v_1v_3 \in \gamma^{-1}(3)$.
		Then $v_2v_6 \not \in \gamma^{-1}(3)$, since otherwise a triple edge has to be inserted at $v_7 v_8$
		to ensure that $\Gamma_{\{0,3\}}$ has two connected components. Hence either $v_2v_4 \in \gamma^{-1}(3)$ 
		or $v_2v_8 \in \gamma^{-1}(3)$. If $v_2v_4 \in \gamma^{-1}(3)$ then $v_6v_7, v_5v_8 \in \gamma^{-1}(3)$. 
		We will denote this graph by $\mathcal{G}_1$ (see Figure \ref{fig:S3inCP2} $(d)$). 
		If $v_2v_8 \in \gamma^{-1}(3)$ then $v_5v_6, v_4v_7 \in \gamma^{-1}(3)$. We will denote this graph 
		by $\mathcal{G}_2$ (see Figure \ref{fig:S3inCP2} $(e)$). 
	\item Let  $v_1v_5 \in \gamma^{-1}(3)$. First note that $v_2v_4 \not \in \gamma^{-1}(3)$ since otherwise 
		$g_{23} > 2$. If $v_2v_6 \in \gamma^{-1}(3)$, then $v_3 v_8, v_4 v_7 \in \gamma^{-1}(3)$
		and we get a graph isomorphic to $\mathcal{G}_1$. If $v_2v_8 \in \gamma^{-1}(3)$, then $v_3v_4, v_6v_7 \in \gamma^{-1}(3)$
		since otherwise $g_{13} < 2$. We will denote this graph by $\mathcal{G}_3$ (see Figure \ref{fig:S3inCP2} $(f)$).
	\item Let $v_1v_7 \in \gamma^{-1}(3)$. If $v_2v_4$ (resp., $v_2v_6$) $\in \gamma^{-1}(3)$, then we get graphs isomorphic to 
		$\mathcal{G}_3$ (resp., $\mathcal{G}_2$). Thus, assume $v_2v_8\in \gamma^{-1}(3)$ and hence 
		$v_4v_5, v_3v_6\in \gamma^{-1}(3)$ (otherwise $g_{13}>2$). In this case we have $\pi_1(|M(\Gamma_{\{0,1,2,3\}})|,x) \cong \mathbb{Z}$
		by Proposition~\ref{prop:gagliardi79b} and thus $\Gamma_{\{0,1,2,3\}}$ is not a crystallization of $S^3$.
\end{enumerate}

\bigskip
\noindent
{\bf Case $v_2v_8 \in \gamma^{-1}(2)$}: It follows that $v_3v_4, v_6v_7 \in \gamma^{-1}(2)$ 
since $\Gamma_{\{1,2\}}$ has two connected components. We will denote this graph by 
$\mathcal{J}_2$ (see Figure \ref{fig:S3inCP2} $(c)$).

Since $\Gamma$ is bipartite and does not contain a triple edge, there are two possibilities
(up to isomorphy) to add an edge of color $3$ to $\mathcal{J}_2$ at vertex $v_1$. 
\begin{enumerate}[(i)]
	\item Let $v_1v_3 \in \gamma^{-1}(3)$. First note that $v_2 v_4 \not \in \gamma^{-1}(3)$ since otherwise 
		$\Gamma_{\{0,1,2,3\}}$ would have a triple edge or $g_{13} >2$.
		If $v_2 v_6 \in \gamma^{-1}(3)$ then $v_3 v_4, v_7v_8 \in \gamma^{-1}(3)$ since otherwise 
		$g_{03}<2$ and $\Gamma_{\{0,1,2,3\}}$ is isomorphic to $\mathcal{G}_3$. If $v_2 v_8 \in \gamma^{-1}(3)$
		then $v_4 v_7, v_5v_6 \in \gamma^{-1}(3)$ to avoid a triple edge at $v_6 v_7$ and, again
		$\Gamma_{\{0,1,2,3\}}$ is isomorphic to $\mathcal{G}_3$.
	\item Let  $v_1v_5 \in \gamma^{-1}(3)$. We have $v_2 v_8 \not \in \gamma^{-1}(3)$ since otherwise 
		$g_{23} > 2$. Then, up to isomorphy we must have $v_2v_4 \in \gamma^{-1}(3)$ and thus $v_3 v_6, v_7v_8 \in \gamma^{-1}(3)$ 
		to avoid a triple edge and we get a graph isomorphic to $\mathcal{G}_3$.
\end{enumerate}

Hence, there are exactly three $8$-vertex crystallizations $\mathcal{G}_1$, $\mathcal{G}_2$ and $\mathcal{G}_3$ of $S^3$ such that $g_{ij} = 2$ for all $\{i,j\} \subset \{ 1,2,3,4 \}$.
This part of the proof can be independently checked using the classification of generalized triangulations \cite{Burton11Census}:
Precisely ten of the $9\,787\,509$ closed eight tetrahedra generalized triangulations of $3$-manifolds are
contracted pseudotriangulations, seven of which are crystallizations of the $3$-sphere, three of which satisfy $g_{ij} = 2$
for all $2$-color subsets $\{i,j\} \subset \{0,1,2,3\}$.


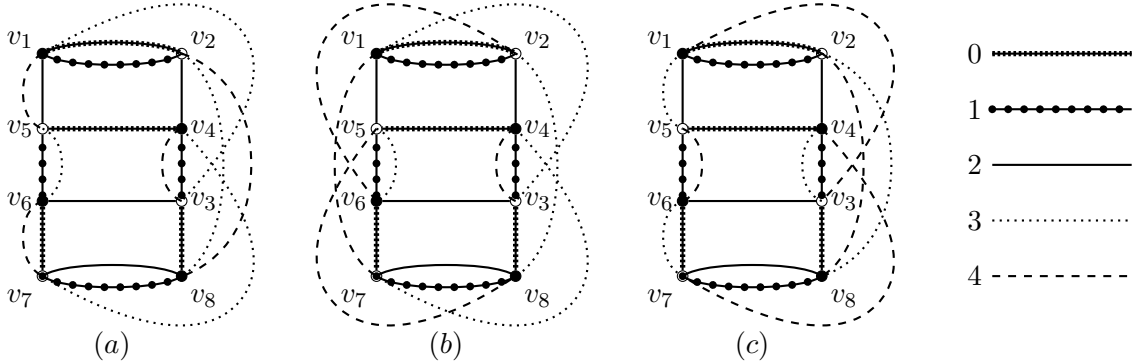
\begin{figure}[ht]
\tikzstyle{vert}=[circle, draw, fill=black!100, inner sep=0pt, minimum width=4pt]
\tikzstyle{vertex}=[circle, draw, fill=black!00, inner sep=0pt, minimum width=4pt]
\tikzstyle{ver}=[]
\tikzstyle{extra}=[circle, draw, fill=black!50, inner sep=0pt, minimum width=2pt]
\tikzstyle{edge} = [draw,thick,-]

\centering
\begin{tikzpicture}[scale=0.37]

\begin{scope}[shift={(-12,7)}]
\node[vertex] (107) at (-2.5,-4){};
\node[vertex] (105) at (-2.5,1.3){};
\node[vertex] (103) at (2.5,-1.3){};
\node[vertex] (102) at (2.5,4){};

\node[vert] (108) at (2.5,-4){};
\node[vert] (104) at (2.5,1.3){};
\node[vert] (106) at (-2.5,-1.3){};
\node[vert] (101) at (-2.5,4){};

\path[edge] (101) -- (105);
\path[edge] (102) -- (104);
\path[edge] (103) -- (106);

\path[edge] (106) -- (105);
\draw [line width=3pt, line cap=round, dash pattern=on 0pt off 2\pgflinewidth] (106) -- (105);
\path[edge] (103) -- (104);
\draw [line width=3pt, line cap=round, dash pattern=on 0pt off 2\pgflinewidth] (103) -- (104);

\path[edge] (108) -- (103);
\draw [line width=2pt, line cap=rectengle, dash pattern=on 1pt off 1] (103) -- (108);
\path[edge] (107) -- (106);
\draw [line width=2pt, line cap=rectengle, dash pattern=on 1pt off 1] (106) -- (107);
\path[edge] (104) -- (105);
\draw [line width=2pt, line cap=rectengle, dash pattern=on 1pt off 1] (104) -- (105);

\draw [line width=2pt, line cap=rectengle, dash pattern=on 1pt off 1] plot [smooth,tension=1.5] coordinates{(101)(0,4.4)(102)};
\draw [line width=3pt, line cap=round, dash pattern=on 0pt off 2\pgflinewidth] plot [smooth,tension=1.5] coordinates{(101)(0,3.6)(102)};
\draw[edge] plot [smooth,tension=1.5] coordinates{(101)(0,4.4)(102)};
\draw[edge] plot [smooth,tension=1.5] coordinates{(101)(0,3.6)(102)};

\draw [line width=3pt, line cap=round, dash pattern=on 0pt off 2\pgflinewidth] plot [smooth,tension=1.5] coordinates{(107)(0,-4.4)(108)};
\draw[edge] plot [smooth,tension=1.5] coordinates{(107)(0,-4.4)(108)};
\draw[edge] plot [smooth,tension=1.5] coordinates{(107)(0,-3.6)(108)};

\node[ver] (200) at (3.3,-1.3){$v_3$};
\node[ver] (201) at (3.3,1.3){$v_4$};
\node[ver] (202) at (3.3,4.5){$v_2$};
\node[ver] (203) at (-3.3,4.5){$v_1$};
\node[ver] (204) at (-3.3,1.3){$v_5$};
\node[ver] (205) at (-3.3,-1.3){$v_6$};
\node[ver] (206) at (-3.3,-4.8){$v_7$};
\node[ver] (207) at (3.3,-4.8){$v_8$};
\draw[edge, dotted] plot [smooth,tension=1.5] coordinates{(101)(4.5,5)(103)};
\draw[edge, dotted] plot [smooth,tension=1.5] coordinates{(102)(4,0)(108)};
\draw[edge, dotted] plot [smooth,tension=1.5] coordinates{(104)(4.5,-5)(107)};
\draw[edge, dotted] plot [smooth,tension=1.5] coordinates{(105)(-1.8,0)(106)};

\draw[edge, dashed] plot [smooth,tension=1.5] coordinates{(101)(-3.2,2.6)(105)};
\draw[edge, dashed] plot [smooth,tension=1.5] coordinates{(102)(5,0)(108)};
\draw[edge, dashed] plot [smooth,tension=1.5] coordinates{(103)(1.8,0)(104)};
\draw[edge, dashed] plot [smooth,tension=1.5] coordinates{(106)(-3.2,-2.6)(107)};
\node[ver] (208) at (0,-6.5){$(a)$};
\end{scope}

\begin{scope}[shift={(0,7)}]
\node[vertex] (107) at (-2.5,-4){};
\node[vertex] (105) at (-2.5,1.3){};
\node[vertex] (103) at (2.5,-1.3){};
\node[vertex] (102) at (2.5,4){};

\node[vert] (108) at (2.5,-4){};
\node[vert] (104) at (2.5,1.3){};
\node[vert] (106) at (-2.5,-1.3){};
\node[vert] (101) at (-2.5,4){};

\path[edge] (101) -- (105);
\path[edge] (102) -- (104);
\path[edge] (103) -- (106);

\path[edge] (106) -- (105);
\draw [line width=3pt, line cap=round, dash pattern=on 0pt off 2\pgflinewidth] (106) -- (105);
\path[edge] (103) -- (104);
\draw [line width=3pt, line cap=round, dash pattern=on 0pt off 2\pgflinewidth] (103) -- (104);

\path[edge] (108) -- (103);
\draw [line width=2pt, line cap=rectengle, dash pattern=on 1pt off 1] (103) -- (108);
\path[edge] (107) -- (106);
\draw [line width=2pt, line cap=rectengle, dash pattern=on 1pt off 1] (106) -- (107);
\path[edge] (104) -- (105);
\draw [line width=2pt, line cap=rectengle, dash pattern=on 1pt off 1] (104) -- (105);

\draw [line width=2pt, line cap=rectengle, dash pattern=on 1pt off 1] plot [smooth,tension=1.5] coordinates{(101)(0,4.4)(102)};
\draw [line width=3pt, line cap=round, dash pattern=on 0pt off 2\pgflinewidth] plot [smooth,tension=1.5] coordinates{(101)(0,3.6)(102)};
\draw[edge] plot [smooth,tension=1.5] coordinates{(101)(0,4.4)(102)};
\draw[edge] plot [smooth,tension=1.5] coordinates{(101)(0,3.6)(102)};

\draw [line width=3pt, line cap=round, dash pattern=on 0pt off 2\pgflinewidth] plot [smooth,tension=1.5] coordinates{(107)(0,-4.4)(108)};
\draw[edge] plot [smooth,tension=1.5] coordinates{(107)(0,-4.4)(108)};
\draw[edge] plot [smooth,tension=1.5] coordinates{(107)(0,-3.6)(108)};

\node[ver] (200) at (3.3,-1.3){$v_3$};
\node[ver] (201) at (3.3,1.3){$v_4$};
\node[ver] (202) at (3.3,4.5){$v_2$};
\node[ver] (203) at (-3.3,4.5){$v_1$};
\node[ver] (204) at (-3.3,1.3){$v_5$};
\node[ver] (205) at (-3.3,-1.3){$v_6$};
\node[ver] (206) at (-3.3,-4.8){$v_7$};
\node[ver] (207) at (3.3,-4.8){$v_8$};
\draw[edge, dotted] plot [smooth,tension=1.5] coordinates{(101)(4.5,5)(103)};
\draw[edge, dotted] plot [smooth,tension=1.5] coordinates{(102)(4,0)(108)};
\draw[edge, dotted] plot [smooth,tension=1.5] coordinates{(104)(4.5,-5)(107)};
\draw[edge, dotted] plot [smooth,tension=1.5] coordinates{(105)(-1.8,0)(106)};

\draw[edge, dashed] plot [smooth,tension=1.5] coordinates{(101)(-4,0)(107)};
\draw[edge, dashed] plot [smooth,tension=1.5] coordinates{(105)(-4,-5)(108)};
\draw[edge, dashed] plot [smooth,tension=1.5] coordinates{(103)(1.8,0)(104)};
\draw[edge, dashed] plot [smooth,tension=1.5] coordinates{(106)(-4,5)(102)};

\node[ver] (208) at (0,-6.5){$(b)$};
\end{scope}

\begin{scope}[shift={(11,7)}]
\node[vertex] (107) at (-2.5,-4){};
\node[vertex] (105) at (-2.5,1.3){};
\node[vertex] (103) at (2.5,-1.3){};
\node[vertex] (102) at (2.5,4){};

\node[vert] (108) at (2.5,-4){};
\node[vert] (104) at (2.5,1.3){};
\node[vert] (106) at (-2.5,-1.3){};
\node[vert] (101) at (-2.5,4){};

\path[edge] (101) -- (105);
\path[edge] (102) -- (104);
\path[edge] (103) -- (106);

\path[edge] (106) -- (105);
\draw [line width=3pt, line cap=round, dash pattern=on 0pt off 2\pgflinewidth] (106) -- (105);
\path[edge] (103) -- (104);
\draw [line width=3pt, line cap=round, dash pattern=on 0pt off 2\pgflinewidth] (103) -- (104);

\path[edge] (108) -- (103);
\draw [line width=2pt, line cap=rectengle, dash pattern=on 1pt off 1] (103) -- (108);
\path[edge] (107) -- (106);
\draw [line width=2pt, line cap=rectengle, dash pattern=on 1pt off 1] (106) -- (107);
\path[edge] (104) -- (105);
\draw [line width=2pt, line cap=rectengle, dash pattern=on 1pt off 1] (104) -- (105);

\draw [line width=2pt, line cap=rectengle, dash pattern=on 1pt off 1] plot [smooth,tension=1.5] coordinates{(101)(0,4.4)(102)};
\draw [line width=3pt, line cap=round, dash pattern=on 0pt off 2\pgflinewidth] plot [smooth,tension=1.5] coordinates{(101)(0,3.6)(102)};
\draw[edge] plot [smooth,tension=1.5] coordinates{(101)(0,4.4)(102)};
\draw[edge] plot [smooth,tension=1.5] coordinates{(101)(0,3.6)(102)};

\draw [line width=3pt, line cap=round, dash pattern=on 0pt off 2\pgflinewidth] plot [smooth,tension=1.5] coordinates{(107)(0,-4.4)(108)};
\draw[edge] plot [smooth,tension=1.5] coordinates{(107)(0,-4.4)(108)};
\draw[edge] plot [smooth,tension=1.5] coordinates{(107)(0,-3.6)(108)};

\node[ver] (200) at (3.3,-1.3){$v_3$};
\node[ver] (201) at (3.3,1.3){$v_4$};
\node[ver] (202) at (3.3,4.5){$v_2$};
\node[ver] (203) at (-3.3,4.5){$v_1$};
\node[ver] (204) at (-3.3,1.3){$v_5$};
\node[ver] (205) at (-3.3,-1.3){$v_6$};
\node[ver] (206) at (-3.3,-4.8){$v_7$};
\node[ver] (207) at (3.3,-4.8){$v_8$};
\draw[edge, dashed] plot [smooth,tension=1.5] coordinates{(101)(4.5,5)(103)};
\draw[edge, dashed] plot [smooth,tension=1.5] coordinates{(102)(4,0)(108)};
\draw[edge, dashed] plot [smooth,tension=1.5] coordinates{(104)(4.5,-5)(107)};
\draw[edge, dashed] plot [smooth,tension=1.5] coordinates{(105)(-1.8,0)(106)};

\draw[edge, dotted] plot [smooth,tension=1.5] coordinates{(101)(-3.2,2.6)(105)};
\draw[edge, dotted] plot [smooth,tension=1.5] coordinates{(102)(5,0)(108)};
\draw[edge, dotted] plot [smooth,tension=1.5] coordinates{(103)(1.8,0)(104)};
\draw[edge, dotted] plot [smooth,tension=1.5] coordinates{(106)(-3.2,-2.6)(107)};
\node[ver] (208) at (0,-6.5){$(c)$};
\end{scope}

\begin{scope}[shift={(22,6)}]
\node[ver] (308) at (-3,5){$0$};
\node[ver] (300) at (-3,3){$1$};
\node[ver] (301) at (-3,1){$2$};
\node[ver] (302) at (-3,-1){$3$};
\node[ver] (303) at (-3,-3){$4$};
\node[ver] (309) at (3,5){};
\node[ver] (304) at (3,3){};
\node[ver] (305) at (3,1){};
\node[ver] (306) at (3,-1){};
\node[ver] (307) at (3,-3){};
\path[edge] (300) -- (304);
\path[edge] (308) -- (309);
\draw [line width=2pt, line cap=rectengle, dash pattern=on 1pt off 1]  (308) -- (309);
\draw [line width=3pt, line cap=round, dash pattern=on 0pt off 2\pgflinewidth]  (300) -- (304);
\path[edge] (301) -- (305);
\path[edge, dotted] (302) -- (306);
\path[edge, dashed] (303) -- (307);
\end{scope}
\end{tikzpicture}
\caption{The three $5$-colored graphs obtained from $\mathcal{G}_2$ and $\mathcal{G}_3$.}
\label{fig:Unique_CP2}
\end{figure}

\medskip
A crystallization of $\mathbb{CP}^2$ can now be found by disjointly adding four more edges to
$\mathcal{G}_1$, $\mathcal{G}_2$, and $\mathcal{G}_3$ to get a $5$-colored bipartite graph. 
A priori, there are $4 \cdot 3 \cdot 2 = 24$ ways to do this per graph but we will see that 
many of them are invalid and the remaining ones are isomorphic to the simple crystallization of
$\mathbb{CP}^2$ presented in Section~\ref{ssec:cp2}.

\begin{description}
	\item[Crystallization $\mathcal{G}_1$:] If $v_5v_i \in \gamma^{-1}(4)$, $i \in \{1,4,6,8\}$, then
		the completed $5$-colored graph $\Gamma$ will contain a $4$-color subgraph $\Gamma_D$ with 
		$\pi_1(|M(\Gamma_{D})|,x) \cong \mathbb{Z}$ and thus $\Gamma$ can not be the crystallization of
		a $4$-manifold.
	\item[Crystallization $\mathcal{G}_2$:] Analogously, we have 
		$v_4v_i \not \in \gamma^{-1}(4)$, $i \in \{2,5,7\}$, since
		otherwise $\Gamma$ can not be the crystallization of a $4$-manifold.
		Thus $v_3v_4\in \gamma^{-1}(4)$ and since 
		$g_{24} = 2$ it follows that we can obtain $\Gamma$ by either adding
		$v_1v_5, v_6v_7,v_2v_8 \in \gamma^{-1}(4)$ (see Figure~\ref{fig:Unique_CP2}~(a)) or adding 
		$v_1v_7, v_2v_6$, $v_5v_8$ $\in \gamma^{-1}(4)$  (see Figure~\ref{fig:Unique_CP2}~(b)). 
		In the former case, $\Gamma$ is isomorphic to the crystallization given in 
		Section~\ref{ssec:cp2}. In the latter case we once again apply 
		Proposition~\ref{prop:gagliardi79b} to see that 
		$\pi_1(|M(\Gamma_{\{0,2,3,4\}})|,x) \cong \mathbb{Z}_2$.
	\item[Crystallization $\mathcal{G}_3$:] Again, we have $v_3v_6$, $v_3v_8$, $v_4v_5$, $v_2v_4 \not \in 
		\gamma^{-1}(4)$ to enforce valid $4$-color subgraphs. It follows that 
		$v_1v_3, v_4v_7 \in \gamma^{-1}(4)$ and since $g_{04}=2$ 
		we have $v_5v_6, v_2v_8 \in \gamma^{-1}(4)$ (see Figure~\ref{fig:Unique_CP2}~(c)). 
		This graph is again isomorphic to the crystallization given in
		Section~\ref{ssec:cp2}.
\end{description}
\end{proof}

\begin{remark}
Given a simple crystallization $(\Gamma,\gamma)$ of a simply connected $4$-manifold $\mathbb{M}$, Theorem~\ref{thm:unique} 
provides a possible way to detect connected summands of type $\mathbb{CP}^2$ or $\overline{\mathbb{CP}^2}$ in $\mathbb{M}$
by solving a subgraph problem: Enumerate all seven vertex subgraphs of type $(\Gamma^1, \gamma^1)$ with one vertex removed 
and check for each if it is connected to the rest of $(\Gamma,\gamma)$ by five edges, one for each color.
Note, that not all connected summands can be detected that way as can be followed from Theorem~\ref{thm:wall} 
(cf. \cite{Wall64SimpConn4Mflds}).
\end{remark}

\section{Heuristics to produce simple crystallizations of 4-manifolds}
\label{sec:compExpts}

Using the $4$-manifold branch of the
computational topology software {\em regina}\footnote{{\em regina} is designed
to handle generalized triangulations in dimensions two, three, and four and can
thus be adapted to work with pseudotriangulations} 
\cite{Burton12CompTopWRegina,Burton09Regina} we use a simulated annealing
type heuristic simplification strategy 
to turn combinatorial manifolds into simple contracted pseudotriangulations.
The strategy uses bistellar moves and so-called
edge contractions (which respect the PL-homeomorphism type of the triangulation,
see Proposition~\ref{prop:edgeContr} and \cite{Burton144Mflds,Pachner87KonstrMethKombHomeo}).

In more detail, recall that a bistellar $i$-move in a simplicial complex $C$
takes $i+1$ facets joined around a common $(d-i)$-dimensional face $\delta \in C$ and 
replaces them with $d-i+1$ facets joined around an $i$-face $\gamma$ with the precondition that
$\gamma$ is not a face of $C$. More precisely we have
$$ \Phi_i (C,\delta) = (C \setminus (\delta \star \partial \gamma)) \cup (\partial \delta \star \gamma),$$
see Figure~\ref{fig:moves} for all bistellar moves in dimension four.
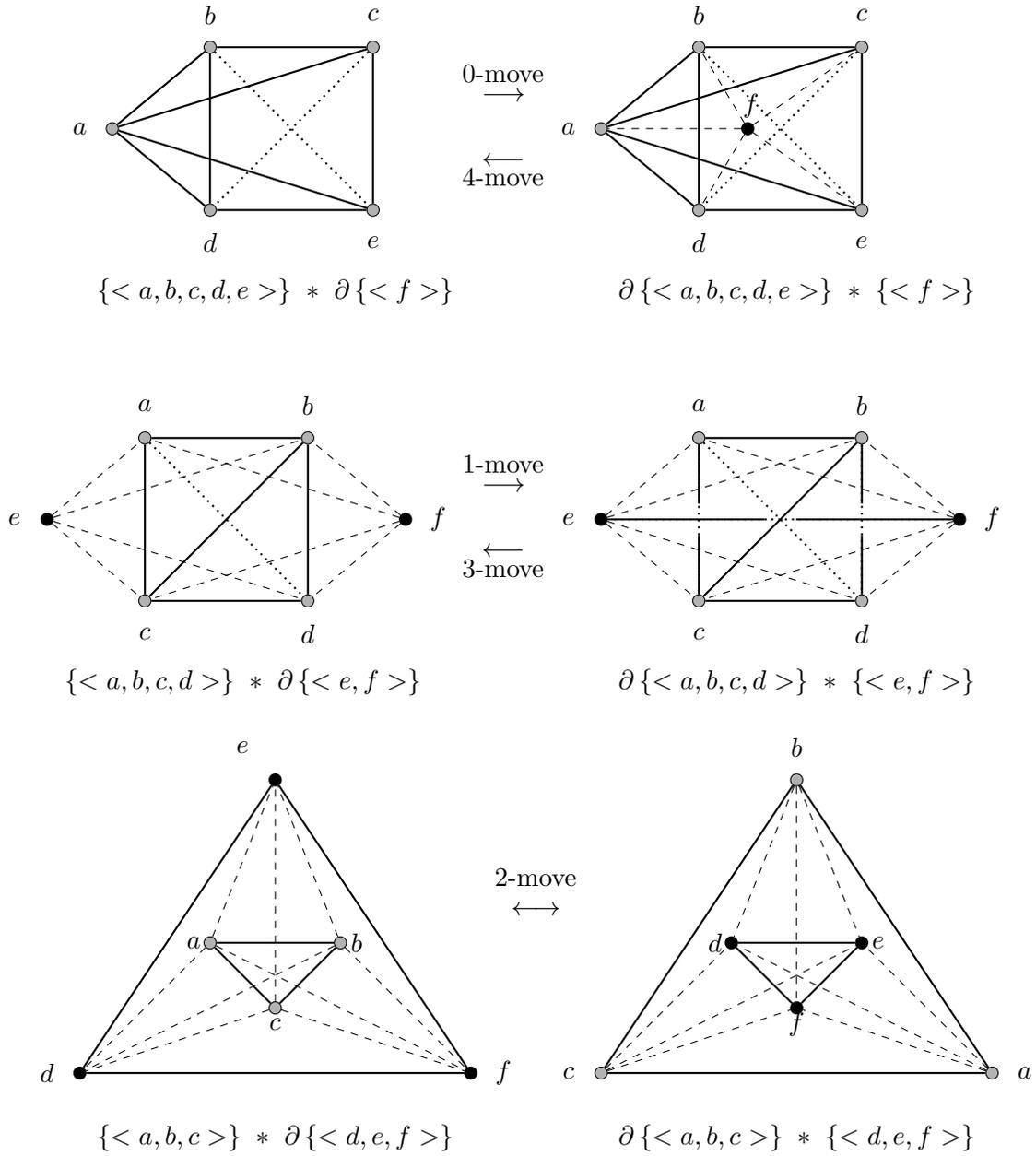
\begin{figure}[p]
\tikzstyle{ver}=[]
\tikzstyle{vertex}=[circle, draw, fill=black!30, inner sep=0pt, minimum width=5pt]
\tikzstyle{vert}=[circle, draw, fill=black!100, inner sep=0pt, minimum width=5pt]
\tikzstyle{edge} = [draw,thick,-]
\tikzstyle{thinedge} = [draw,thin,black!100,-]

\centering
\begin{tikzpicture}[scale=0.47]

\begin{scope}[shift={(0,0)}]
\node[vertex] (a1) at (-13,0){};
\node[vertex] (d1) at (-10,-2.5){}; 
\node[vertex] (b1) at (-10,2.5){}; 
\node[vertex] (e1) at (-5,-2.5){};
\node[vertex] (c1) at (-5,2.5){}; 

\node[ver] () at (-14,0){$a$};
\node[ver] () at (-10,-3.5){$d$}; 
\node[ver] () at (-10,3.5){$b$}; 
\node[ver] () at (-5,-3.5){$e$}; 
\node[ver] () at (-5,3.5){$c$}; 
\foreach \x/\y in {b1/c1,b1/d1,c1/e1,d1/e1}{ \path[edge] (\x) -- (\y);} 
\foreach \x/\y in {b1/e1,c1/d1}{ \path[edge,dotted] (\x) -- (\y);} 
\foreach \x/\y in {a1/b1,a1/c1,a1/d1,a1/e1}{ \path[edge] (\x) -- (\y);}

\node[vertex] (a2) at (2,0){};
\node[vertex] (d2) at (5,-2.5){}; 
\node[vertex] (b2) at (5,2.5){}; 
\node[vertex] (e2) at (10,-2.5){};
\node[vertex] (c2) at (10,2.5){}; 
\node[vert] (f2) at (6.5,0){}; 

\foreach \x/\y in {b2/c2,b2/d2,c2/e2,d2/e2}{ \path[edge] (\x) -- (\y);} 
\foreach \x/\y in {b2/e2,c2/d2}{ \path[edge,dotted] (\x) -- (\y);} 
\foreach \x/\y in {a2/b2,a2/c2,a2/d2,a2/e2}{ \path[edge] (\x) -- (\y);}
\foreach \x/\y in {f2/a2,f2/b2,f2/c2,f2/d2,f2/e2}{ \path[thinedge,dashed] (\x) -- (\y);}

\node[ver] () at (1,0){$a$};
\node[ver] () at (5,-3.5){$d$}; 
\node[ver] () at (5,3.5){$b$}; 
\node[ver] () at (10,-3.5){$e$}; 
\node[ver] () at (10,3.5){$c$}; 
\node[ver] () at (6.6,0.7){$f$};
\node[ver] () at (-1,1){$\longrightarrow $};
\node[ver] () at (-1,-1){$\longleftarrow $};
\node[ver] () at (-1,1.7){$0$-move};
\node[ver] () at (-1,-1.5){$4$-move};
\node[ver] () at (-8,-5){$\{<a,b,c,d,e>\} \,\,\ast\,\, \partial \, \{ < f > \}$};
\node[ver] () at (8,-5){$\partial \, \{<a,b,c,d,e>\}\,\,\ast\,\, \{<f>\}$};
\end{scope}

\begin{scope}[shift={(0,-12)}]
\node[vert] (e3) at (-15,0){};
\node[vertex] (c3) at (-12,-2.5){}; 
\node[vertex] (a3) at (-12,2.5){}; 
\node[vertex] (d3) at (-7,-2.5){};
\node[vertex] (b3) at (-7,2.5){}; 
\node[vert] (f3) at (-4,0){};
\foreach \x/\y in {a3/b3,a3/c3,b3/c3,b3/d3,c3/d3}{ \path[edge] (\x) -- (\y);} 
\foreach \x/\y in {a3/d3}{ \path[edge,dotted] (\x) -- (\y);} 

\node[ver] () at (-16,0){$e$};
\node[ver] () at (-12,-3.5){$c$}; 
\node[ver] () at (-12,3.5){$a$}; 
\node[ver] () at (-7,-3.5){$d$}; 
\node[ver] () at (-7,3.5){$b$}; 
\node[ver] () at (-3,0){$f$}; 

\foreach \x/\y in {e3/a3,e3/b3,e3/c3,e3/d3,f3/a3,f3/b3,f3/c3,f3/d3}{ \path[thinedge, dashed] (\x) -- (\y);}

\node[vert] (e4) at (2,0){};
\node[vertex] (c4) at (5,-2.5){}; 
\node[vertex] (a4) at (5,2.5){}; 
\node[vertex] (d4) at (10,-2.5){};
\node[vertex] (b4) at (10,2.5){}; 
\node[vert] (f4) at (13,0){}; 
\node[ver] (ac1) at (5,0.2){}; 
\node[ver] (ac2) at (5,-0.2){}; 
\node[ver] (bd1) at (10,0.2){}; 
\node[ver] (bd2) at (10,-0.2){};
\node[ver] (ef1) at (7.3,0){}; 
\node[ver] (ef2) at (7.7,0){};

\foreach \x/\y in {a4/b4,a4/ac1,ac2/c4,b4/c4,b4/bd1,bd2/d4,c4/d4,e4/ef1,ef2/f4}{ \path[edge] (\x) -- (\y);} 
\foreach \x/\y in {a4/d4,a4/c4,b4/d4,e4/f4}{ \path[edge,dotted] (\x) -- (\y);} 
\foreach \x/\y in {e4/a4,e4/b4,e4/c4,e4/d4,f4/a4,f4/b4,f4/c4,f4/d4}{ \path[thinedge, dashed] (\x) -- (\y);}

\node[ver] () at (1,0){$e$};
\node[ver] () at (5,-3.5){$c$}; 
\node[ver] () at (5,3.5){$a$}; 
\node[ver] () at (10,-3.5){$d$}; 
\node[ver] () at (10,3.5){$b$}; 
\node[ver] () at (14,0){$f$};

\node[ver] () at (-1,1){$\longrightarrow $};
\node[ver] () at (-1,-1){$\longleftarrow $};
\node[ver] () at (-1,1.7){$1$-move};
\node[ver] () at (-1,-1.5){$3$-move};
\node[ver] () at (-9,-5){$\{<a,b,c,d>\}\,\,\ast\,\, \partial \, \{<e,f>\}$};
\node[ver] () at (8,-5){$\partial \, \{<a,b,c,d>\}\,\,\ast\,\, \{<e,f>\}$};
\end{scope}

\begin{scope}[shift={(0,-24)}]
\node[vert] (d5) at (-14,-5){};
\node[vert] (f5) at (-2,-5){}; 
\node[vert] (e5) at (-8,4){}; 
\node[vertex] (a5) at (-10,-1){};
\node[vertex] (b5) at (-6,-1){}; 
\node[vertex] (c5) at (-8,-3){}; 
\node[ver] () at (-15,-5){$d$};
\node[ver] () at (-1,-5){$f$}; 
\node[ver] () at (-9,5){$e$}; 
\node[ver] () at (-10.5,-1){$a$}; 
\node[ver] () at (-5.5,-1){$b$}; 
\node[ver] () at (-8,-3.5){$c$};

\node[vertex] (a6) at (2,-5){};
\node[vertex] (c6) at (14,-5){}; 
\node[vertex] (b6) at (8,4){}; 
\node[vert] (e6) at (10,-1){};
\node[vert] (d6) at (6,-1){}; 
\node[vert] (f6) at (8,-3){}; 
\node[ver] () at (15,-5){$a$};
\node[ver] () at (1,-5){$c$}; 
\node[ver] () at (8,5){$b$}; 
\node[ver] () at (10.5,-1){$e$}; 
\node[ver] () at (5.5,-1){$d$}; 
\node[ver] () at (8,-3.5){$f$};

\node[ver] () at (0,0){$\longleftrightarrow $};
\node[ver] () at (0,1){$2$-move};

\node[ver] () at (-8,-7){$\{<a,b,c>\}\,\,\ast\,\, \partial \,\{<d,e,f>\}$};
\node[ver] () at (8,-7){$\partial \, \{<a,b,c>\}\,\,\ast\,\,\{<d,e,f>\}$};

\foreach \x/\y in {a5/b5,a5/c5,b5/c5,d5/e5,d5/f5,e5/f5,a6/b6,a6/c6,b6/c6,d6/e6,d6/f6,e6/f6}{ \path[edge] (\x) -- (\y);}
\foreach \x/\y in {a5/d5,a5/e5,a5/f5,c5/d5,c5/e5,c5/f5,b5/d5,b5/e5,b5/f5,a6/d6,a6/e6,a6/f6,c6/d6,c6/e6,c6/f6,b6/d6,b6/e6,b6/f6}{ \path[thinedge,dashed] (\x) -- (\y);}
\end{scope}
\end{tikzpicture}
\caption{Bistellar moves in dimension four.}
\label{fig:moves}
\end{figure}
If $C$ is a pseudotriangulation we can weaken the precondition on $\gamma$ as 
$C$ now can have multiple faces with equal vertex set. In fact, the only precondition
we have to check in the pseudotriangular setting is that no two vertices of any 
facet become identified by a bistellar move.
Note that in dimension four this is automatically satisfied for all $0$-, $2$-, $3$- and $4$-moves.
In the case of a $1$-move we have to check that the edge we are inserting is not a loop (i.e., that
$e \neq f$ in Figure~\ref{fig:moves}).

\medskip
An {\em edge contraction} of a pseudotriangulation $M$ along an edge $e \in M$ 
is the simplicial cell complex $M'$ obtained from $M$ by collapsing every facet 
containing $e$ along $e$ and, in the process, defining the face gluings in
$M'$ in the obvious way. Of course, a number of pre-conditions have to be met
to ensure that $M'$ is again a pseudotriangulation. However, since edge contractions are rare,
we let {\em regina} take care of checking the pre-conditions of the modification in the
more general setting of generalized triangulations
and explicitly check if the complex after an edge contraction is still a pseudotriangulation 
(that is, check that no vertex identifications have been introduced). If the resulting complex 
is not a pseudotriangulation we undo the edge contraction and proceed as before.

It remains to show that edge contractions do not change the PL topological type of a pseudotriangulation
provided the resulting complex is again a pseudotriangulation, more precisely

\begin{prop}\label{prop:edgeContr}
	Let $M$ be a pseudotriangulation of a $4$-manifold, let $e\in M$ be an edge of $M$ and
	let $M'$ be a pseudotriangulation obtained from $M$ by contracting $e$, then
	$ M \, \cong_{\operatorname{PL}} \, M'$.
\end{prop}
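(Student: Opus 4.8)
The plan is to \emph{localize} the contraction and then reduce the statement to a uniqueness property of PL balls. Write $e=\{u,v\}$ and let $w$ be the vertex of $M'$ obtained by identifying $u$ and $v$. First I would split $M$ as $M=X\cup N$, where $X\subseteq M$ is the subcomplex consisting of the facets of $M$ containing neither $u$ nor $v$ together with all their faces, and $N:=\overline{\operatorname{st}}_{M}(u)\cup\overline{\operatorname{st}}_{M}(v)$ collects the facets containing $u$ or $v$; correspondingly $M'=X\cup N'$ with $N':=\overline{\operatorname{st}}_{M'}(w)$. The combinatorial heart is then a bookkeeping lemma: using that $M,M',X$ are weak pseudomanifolds and using the standing hypothesis that $M'$ is again a pseudotriangulation (so that the contraction introduces no self-identifications — in particular $e$ has no parallel edge and no two facets get glued into one), one checks that $N\cap X$ consists precisely of the tetrahedra of $M$ avoiding $u$ and $v$, that these tetrahedra and the complex $X$ are untouched by the contraction, and hence that $N\cap X=N'\cap X=:\Sigma$ is literally the \emph{same} simplicial cell complex, with $\Sigma=\partial N=\partial N'=\lk{M'}{w}$ and with $\Sigma$ attached to $X$ in the same way inside $M$ and inside $M'$. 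Thus $M=X\cup_{\Sigma}N$ and $M'=X\cup_{\Sigma}N'$, and it suffices to exhibit a PL homeomorphism $|N|\to|N'|$ that restricts to the identity on $|\Sigma|$; gluing it with $\operatorname{id}_{|X|}$ then yields $M\cong_{\operatorname{PL}}M'$.

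The second step identifies $|N|$ and $|N'|$ as standard objects. Since $M$ is a pseudotriangulation of the closed $4$-manifold $\mathbb{M}$ and (by the above) $\bar e$ is a full one-dimensional subcomplex, $|N|$ is a regular neighborhood of the PL arc $|e|$ in $\mathbb{M}$, hence a PL $4$-ball with $\partial|N|=|\Sigma|\cong_{\operatorname{PL}}S^{3}$. On the other hand $N'=\bar w\star\lk{M'}{w}$ is a cone; since $M'$ is a pseudotriangulation of a $4$-manifold, its vertex figure $\lk{M'}{w}$ is a PL $3$-sphere, so $|N'|$ is likewise a PL $4$-ball with $\partial|N'|=|\Sigma|$. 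This is exactly the point where the hypothesis that $M'$ is a pseudotriangulation is indispensable: without it $N'$ need not be a ball with boundary $\Sigma$, and $|M'|$ need not be a manifold at all.

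The argument is then closed by the ball-uniqueness input, which I expect to be the main obstacle. Choosing any PL homeomorphisms $g\colon|N|\to\Delta^{4}$ and $g'\colon|N'|\to\Delta^{4}$, the composite $\psi:=(g'|_{|\Sigma|})\circ(g|_{|\Sigma|})^{-1}$ is a PL self-homeomorphism of $S^{3}=\partial\Delta^{4}$, and the crux is that \emph{every} PL self-homeomorphism of $S^{3}$ extends to a PL self-homeomorphism $\widehat{\psi}$ of $\Delta^{4}$; then $(g')^{-1}\circ\widehat{\psi}\circ g\colon|N|\to|N'|$ is a PL homeomorphism which is the identity on $|\Sigma|$, as required. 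This extension statement is the four-dimensional PL Alexander trick, and — unlike in all other dimensions — it is not formal; the clean way to obtain it is via Cerf's theorem $\Gamma_{4}=0$, equivalently the fact that $\pi_{0}\operatorname{PL}(S^{3})\cong\mathbb{Z}/2$ is generated by a reflection and that both of these classes extend over the ball. The delicate thing will be to invoke precisely this, and not the still-open smooth (equivalently PL) four-dimensional Poincar\'e conjecture, which is what one would run into if one instead tried to compare two \emph{closed} homotopy $4$-spheres; beyond that, the only care needed is in the bookkeeping lemma of the first paragraph, where the exact definition of an edge contraction together with the hypothesis on $M'$ are used to rule out the various degeneracies.
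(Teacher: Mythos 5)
Your argument takes a genuinely different route from the paper's, and it has a gap at its second step that I do not think can be repaired in this form. For a pseudotriangulation in the sense of this paper --- a simplicial cell complex whose vertex \emph{figures} are PL $3$-spheres --- the closed star of a vertex is \emph{not} the cone on its link: two facets containing $w$ may be glued to each other along a tetrahedron that does not contain $w$, and such gluings are invisible to the vertex figure. Hence $N'=\overline{\operatorname{st}}_{M'}(w)$ need not equal $\bar w\star\lk{M'}{w}$ and need not be a PL $4$-ball. The failure is not marginal: for the contracted pseudotriangulations that are the whole point of this paper ($5$ vertices in dimension $4$), every facet contains every vertex, so $N'=M'$ is the entire closed manifold, $X=\emptyset$, and your decomposition $M'=X\cup_\Sigma N'$ degenerates completely (the same happens to $N$ when $M$ has $6$ vertices, since then every facet omits at most one vertex and so meets $\{u,v\}$). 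Even away from this extreme, the union of all closed facets containing $u$ or $v$ is not a regular neighborhood of the arc $|e|$ --- regular neighborhoods require fullness and a derived subdivision --- and even in a genuine combinatorial manifold $\overline{\operatorname{st}}(u)\cup\overline{\operatorname{st}}(v)$ can fail to be a ball, because a facet containing $u$ but not $v$ may meet a facet containing $v$ but not $u$ outside $\overline{\operatorname{st}}(e)$. So the claim that $|N|$ and $|N'|$ are PL $4$-balls with common boundary $|\Sigma|$ is unsupported and in the relevant cases false.

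A smaller point: the step you single out as ``the main obstacle'' is not one. Once two PL balls with a PL homeomorphism between their boundary spheres are in hand, the extension is the formal PL Alexander trick (cone the boundary map); this works in every dimension, and Cerf's theorem $\Gamma_4=0$ is a statement about the smooth category that is not needed here. The genuinely hard part is the one your proof assumes, namely that the two pieces are balls. The paper sidesteps all of this by localizing at $\operatorname{st}_M(e)$ only --- the facets containing $e$, which by loop-freeness of $M'$ are exactly those containing both $u$ and $v$ --- observing that the contraction identifies the two halves $\mathbb{D}_u,\mathbb{D}_v$ of $\partial\operatorname{st}_M(e)$ to a single subcomplex $\mathbb{D}\subset M'$, and then subdividing $M'$ near $\mathbb{D}$ so as to reinsert a parallel copy of $\mathbb{D}$; the result $M''$ is a common refinement of $M$ and $M'$, which gives the PL homeomorphism without recognizing any piece as a ball or extending any boundary homeomorphism.
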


\begin{proof}
	Let $u$ and $v$ be the endpoints of $e$ in $M$. Then, the only facets in 
	$M$ containing both $u$ and $v$ are the ones containing $e$. To see this, 
	note that a facet $\Delta \in M$ containing both $u$ and $v$ but not $e$
	would have a loop edge in $M'$ which is a contradiction to the fact that $M'$ is a 
	pseudotriangulation.

	Now, let $\mathbb{D}_u = \operatorname{lk}_M (u) \cap \partial \operatorname{st}_M (e) $ 
	and $\mathbb{D}_v = \operatorname{lk}_M (v) \cap \partial \operatorname{st}_M (e) $
	be the natural decomposition of the boundary of $\operatorname{st}_M (e)$ along
	$\operatorname{lk}_M (e)$.
	Then, by construction $\mathbb{D}_u$ and $\mathbb{D}_v$ are identified in $M'$.
	Denote this subcomplex in $M'$ by $\mathbb{D}$ and without loss of 
	generality identify its center vertex by
	$v \in \mathbb{D}$ (that is, $\mathbb{D}_u$ becomes identified with $\mathbb{D}$ in $M'$).
	The subcomplex $\mathbb{D}$ induces a partition of $\operatorname{st}_{M'} (v)$ into
	$A^+ = \operatorname{st}_{M'} (v) \cap \operatorname{st}_{M'} (e)$ and
	$A^- = \operatorname{st}_{M'} (v) \setminus \operatorname{st}_{M'} (e)$.
	Now, subdivide $A^+$ in $M'$ until it contains a subdivision $\mathbb{D}'$ of a copy
	of $\mathbb{D}$ in its interior such that 
	$\partial \mathbb{D}' = \operatorname{lk}_M (e) = \partial \mathbb{D}$.
	Denote the resulting pseudotriangulation by $M''$. Now we can check that
	$M''$ is a common refinement of both $M$ and $M'$ (with a subdivided version of 
	$\mathbb{D}'$ being identified with a subdivided version of $\mathbb{D}_v$
	in $M''$). Thus, $M$ and $M'$ must be PL-homeomorphic.
\end{proof}

\medskip
Now, the procedure to construct simple contracted pseudotriangulations essentially performs bistellar moves and 
edge contractions at random where moves reducing the complexity of the triangulation ($3$-moves, 
$4$-moves and edge contractions) are performed with a higher
probability. 
Using this strategy we were able to obtain
%
%
$40\,651$ simple contracted pseudotriangulations from the combinatorial manifold PL-homeomorphic to the
K3 surface due to K\"uhnel and the second author \cite{Spreer09CombPropsOfK3} and $19\,129$ simple contracted
pseudotriangulations PL-homeomorphic to the minimum $16$-vertex combinatorial manifold homeomorphic to the K3 surface due to Casella 
and K\"uhnel \cite{Casella01TrigK3MinNumVert}.
We believe that the number of simple contracted pseudotriangulations of the K3 surface
is orders of magnitude larger than the numbers provided above. 
Note that both versions of the $K3$ surface are conjectured to be PL-homeomorphic \cite{Spreer09CombPropsOfK3}. 
This conjecture could be settled by finding a simple crystallization which occurs in both the list of simple contracted
pseudotriangulations. However, as of today both lists are disjoint (see 
\cite{Burton13CombinatorialDiffeomorphisms} for another attempt to
settle this conjecture). This is work in progress.

\medskip
The code, as well as all data generated using the heuristics is available from the authors upon request.

\section{Simple crystallizations of \boldmath{$S^2 \times S^2$}}
\label{eg:s2s2}

Simple crystallizations homeomorphic to $S^2 \times S^2$ have been completely classified by
\cite{Casali12Crystallizations,CasaliDUKEIII,Casali02CodeForMBiPartiteColouredGraphs}, 
and all of them are of standard PL type \cite{Casali12Crystallizations,Casali13ColouredGraphs,
Casali14Cataloguing, Cavicchioli08ClassComb4Mflds}.
Here we present a particularly symmetric example which was obtained from the standard combinatorial
triangulation of $S^2 \times S^2$ using the heuristics described in Section~\ref{sec:compExpts}.

\medskip
Let $(\Gamma^2, \gamma^2)$ be the contracted $5$-colored graph with
color set $C=\{0,1,2,3,4\}$ given in Figure \ref{fig:G2} and let $M_2$ 
be the corresponding contracted pseudotriangulation. To independently verify its topological 
type we can proceed as in the previous example. We follow that
$M_2$ is a simple contracted pseudotriangulation 
of a (simply connected) $4$-manifold with intersection form of rank two.
Taking the barycentric subdivision of $M_2$ yields a combinatorial $4$-manifold
and computing its intersection form using {\em simpcomp} \cite{simpcomp,simpcompISSAC,simpcompISSAC11} yields that
$| M_2 |$ is homeomorphic to $S^2 \times S^2$.

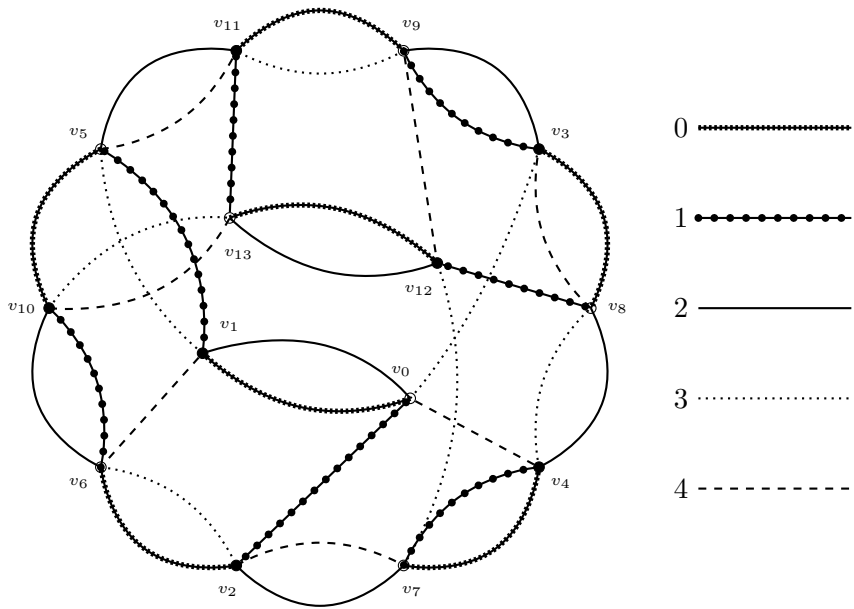
\begin{figure}[ht]
\tikzstyle{vert}=[circle, draw, fill=black!100, inner sep=0pt, minimum width=4pt]
\tikzstyle{vertex}=[circle, draw, fill=black!00, inner sep=0pt, minimum width=4pt]
\tikzstyle{ver}=[]
\tikzstyle{extra}=[circle, draw, fill=black!50, inner sep=0pt, minimum width=2pt]
\tikzstyle{edge} = [draw,thick,-]
\centering
\begin{tikzpicture}[scale=1.2]
\begin{scope}[shift={(0,0)}]
\foreach \x/\y in {0/8,72/9,144/5,216/6,288/7}{
\node[ver] (\y) at (\x:3.3){\tiny{$v_{\y}$}};
\node[vertex] (\y) at (\x:3){};
}
\foreach \x/\y in {36/3,108/11,180/10,252/2,324/4}{
\node[ver] (\y) at (\x:3.3){\tiny{$v_{\y}$}};
\node[vert] (\y) at (\x:3){};}
\end{scope}

\begin{scope}[rotate=18]
\foreach \x/\y in {0/108,72/109,144/105,216/106,288/107}{
\node[ver] (\y) at (\x:3.3){};}
\foreach \x/\y in {36/103,108/1011,180/1010,252/102,324/104}{
\node[ver] (\y) at (\x:3.3){};}
\end{scope}

\begin{scope}[rotate=18]
\foreach \x/\y in {0/208,72/209,144/205,216/206,288/207}{
\node[ver] (\y) at (\x:2.6){};
}
\foreach \x/\y in {36/203,108/2011,180/2010,252/202,324/204}{
\node[ver] (\y) at (\x:2.6){};
}
\end{scope}

\begin{scope}
\node[vertex] (13) at (-1,1){};
\node[vert] (1) at (-1.3,-0.5){};
\node[vertex] (0) at (1,-1){};
\node[vert] (12) at (1.3,0.5){};
\node[ver] () at (-0.9,0.6){\tiny{$v_{13}$}};
\node[ver] () at (-1,-0.2){\tiny{$v_1$}};
\node[ver] () at (0.9,-0.7){\tiny{$v_0$}};
\node[ver] () at (1.1,0.2){\tiny{$v_{12}$}};
\node[ver] (21) at (0.2,1.1){};
\node[ver] (31) at (0.1,0.4){};
\node[ver] (22) at (0,-0.4){};
\node[ver] (32) at (-0.2,-1.1){};
\node[ver] (23) at (-1.5,0.8){};
\node[ver] (33) at (-2.1,0.5){};
\node[ver] (24) at (-2.1,0.8){};
\node[ver] (34) at (-1.8,0.2){};
\node[ver] (25) at (1.8,0.2){};
\node[ver] (35) at (1.5,-1.2){};

\end{scope}

\foreach \x/\y/\z in {8/3/108,9/11/109,5/10/105,6/2/106,7/4/107,4/8/104,3/9/103,11/5/1011,10/6/1010,2/7/102,12/13/21,0/1/22,1/5/23}{ 
\draw[edge] plot [smooth,tension=1] coordinates{(\x)(\z)(\y)};}
\foreach \x/\y/\z in {7/4/207,3/9/203,10/6/2010,0/1/32,12/13/31}{ 
\draw[edge] plot [smooth,tension=1] coordinates{(\x)(\z)(\y)};}
\foreach \x/\y in {0/2,11/13,8/12}{
\draw[edge]  (\x) -- (\y);}

\foreach \x/\y/\z in {0/1/32,2/6/106,3/8/108,4/7/107,9/11/109,5/10/105,12/13/21}{ 
\draw[line width=2pt, line cap=rectengle, dash pattern=on 1pt off 1] plot [smooth,tension=1] coordinates{(\x)(\z)(\y)};}

\foreach \x/\y/\z in {3/9/203,1/5/23,4/7/207,6/10/2010}{
\draw [line width=3pt, line cap=round, dash pattern=on 0pt off 2\pgflinewidth] plot [smooth,tension=1] coordinates{(\x)(\z)(\y)};}

\foreach \x/\y in {0/2,11/13,8/12}{
\draw [line width=3pt, line cap=round, dash pattern=on 0pt off 2\pgflinewidth] (\x) -- (\y);}

\foreach \x/\y/\z in {0/3/25,1/5/33,6/2/206,7/12/35,4/8/204,10/13/24,9/11/209}{ 
\draw[edge,dotted] plot [smooth,tension=1] coordinates{(\x)(\z)(\y)};}

\foreach \x/\y/\z in {8/3/208,2/7/202,5/11/2011,10/13/34}{ 
\draw[edge,dashed] plot [smooth,tension=1] coordinates{(\x)(\z)(\y)};}
\foreach \x/\y in {0/4,1/6,9/12}{
\draw[edge,dashed]  (\x) -- (\y);}
\begin{scope}[shift={(6,2)}]
\node[ver] (300) at (-2,0){$0$};
\node[ver] (301) at (-2,-1){$1$};
\node[ver] (302) at (-2,-2){$2$};
\node[ver] (303) at (-2,-3){$3$};
\node[ver] (304) at (-2,-4){$4$};
\node[ver] (305) at (0,0){};
\node[ver] (306) at (0,-1){};
\node[ver] (307) at (0,-2){};
\node[ver] (308) at (0,-3){};
\node[ver] (309) at (0,-4){};

\path[edge] (300) -- (305);
\draw [line width=2pt, line cap=rectengle, dash pattern=on 1pt off 1]  (300) -- (305);
\path[edge] (301) -- (306);
\draw [line width=3pt, line cap=round, dash pattern=on 0pt off 2\pgflinewidth]  (301) -- (306);
\path[edge] (302) -- (307);
\path[edge, dotted] (303) -- (308);
\path[edge, dashed] (304) -- (309);
\end{scope}
\end{tikzpicture}
\caption{A simple crystallization of $S^2 \times S^2$.}
\label{fig:G2}
\end{figure}

\section{Simple crystallizations of the \boldmath{K3 surface}}
\label{eg:k3}

The following simple crystallization $(\Gamma^3, \gamma^3)$ of the K3 surface was 
obtained from a simple contracted pseudotriangulation $M_3$ which was
constructed using the $17$-vertex combinatorial manifold $K3$ with standard PL structure from 
\cite{Spreer09CombPropsOfK3} together with the heuristics described in Section~\ref{sec:compExpts}.

\medskip
To give an independent prove that $(\Gamma^3, \gamma^3)$ is homeomorphic to the
K3 surface we first have to take a look at the $2$-colored graphs $\Gamma_{\{i,j\}}^3$, 
$\{i,j\} \subset C$. For instance, we have 
$$	\Gamma^3_{\{0,1\}} = 5C_2 \sqcup 7C_4 \sqcup 6C_6 \sqcup C_8 \sqcup 2C_{10}\sqcup 2C_{16}, $$
where $k C_i$ denotes $k$ disjoint copies of an $i$-cycle. Note that $\Gamma^3_{\{0,1\}}$ has exactly $m=23$ 
connected components. More explicitly, using the notation for cycles as given in Section~\ref{prop:connSum} we have

\small
$$ \begin{array}{rc lc lc lc lc lc lc}
	\Gamma^3_{\{0,1\}}&=& C(6,7) &\sqcup& C(88,89) &\sqcup& C(92,93) &\sqcup& C(118,119) &\sqcup& C(120,121) &\sqcup &&\\
	&& \multicolumn{3}{l}{C(12,13,26,27)} &\sqcup& \multicolumn{3}{l}{C(18,19,7,71)} &\sqcup& \multicolumn{3}{l}{C(48,49,108,109)} &\sqcup \\
	&& \multicolumn{3}{l}{C(58,59,82,83)} &\sqcup& \multicolumn{3}{l}{C(66,67,102,103)} &\sqcup& \multicolumn{3}{l}{C(72,73,74,75)} & \sqcup \\
	&& \multicolumn{3}{l}{C(108,109,114,115)} &\sqcup& \multicolumn{3}{l}{C(0,1,20,21,14,15)} &\sqcup& \multicolumn{3}{l}{C(2,3,32,33,8,9)} &\sqcup \\
	&& \multicolumn{5}{l}{C(10,11,84,85,80,81)} &\sqcup& \multicolumn{5}{l}{C(38,39,76,77,40,41)} &\sqcup  \\
	&& \multicolumn{5}{l}{C(54,55,56,57,130,131)} &\sqcup& \multicolumn{5}{l}{C(68,69,86,87,90,91)} & \sqcup \\
	&& \multicolumn{5}{l}{C(62,63,112,113,124,125,64,65)} &\sqcup& \multicolumn{5}{l}{C(16,17,50,51,60,61,22,23,96,97)} & \sqcup \\
	&& \multicolumn{7}{l}{C(34,35,42,43,132,133,128,129,78,79,34,35)} &\sqcup &&&& \\
	&& \multicolumn{7}{l}{C(4,5,52,53,106,107,94,95,36,37,126,127,44,45,46,47)} &\sqcup &&&& \\
	&& \multicolumn{9}{l}{C(24,25,98,99,28,29,110,111,30,31,122,123,100,101,116,117)}.  &&& \\
\end{array} $$

\normalsize
See Figure~\ref{fig:G3} to verify the list of cycles for $\Gamma^3_{\{1,2\}}$ and $\Gamma^3_{\{3,4\}}$. 
In addition, $\Gamma^3_{\{0,1,2\}}$ and $\Gamma^3_{\{0,3,4\}}$, which can be obtained by adding
edges of type $v_{2i}v_{2i+1}$, $0 \leq i \leq 66$, are both connected. 

Furthermore, note that each $(\Gamma^3|_{C \setminus \{c\}}, \gamma^3|_{(\gamma^3)^{-1}(C \setminus \{c\})})$
has $134$ vertices and we have $3 \cdot 23 = 134/2 + 2$. Thus, all
$(\Gamma^3|_{C \setminus \{c\}}, \gamma^3|_{(\gamma^3)^{-1}(C \setminus \{c\})})$ are crystallizations
of $3$-manifolds $\mathbb{M}_3^{(c)}$ for all $c \in C$. It remains to show that for all $c \in C$ the $3$-manifolds $\mathbb{M}_3^{(c)}$ 
are simply connected. Once again, this can be done by applying Proposition \ref{prop:gagliardi79b}.
It follows that $(\Gamma^3,\gamma^3)$ is the simple crystallization of a simply connected $4$-manifold
with intersection form of rank $22$.
At this point we proceed by feeding the corresponding triangulation $M_3$ into {\em simpcomp} to verify that $|M_3|$ is
homeomorphic to the K3 surface.

\begin{figure}[p]
\tikzstyle{vert}=[circle, draw, fill=black!100, inner sep=0pt, minimum width=4pt]
\tikzstyle{vertex}=[circle, draw, fill=black!00, inner sep=0pt, minimum width=4pt]
\tikzstyle{ver}=[]
\tikzstyle{extra}=[circle, draw, fill=black!50, inner sep=0pt, minimum width=2pt]
\tikzstyle{edge} = [draw,thick,-]
\centering

\begin{tikzpicture}[scale=0.51]

\begin{scope}[]
\node[ver] () at (-15,5.5){\tiny{$v_{2i}$}};
\node[ver] (v_{2i+1}) at (-9.5,5.5){\tiny{$v_{2i+1}$}};
\node[vertex] (v_{2i}) at (-14.2,5.5){};
\node[vert] (v_{2i+1}) at (-10.7,5.5){};
\path[edge] (v_{2i}) -- (v_{2i+1});
\draw[line width=2pt, line cap=rectengle, dash pattern=on 1pt off 1] (v_{2i}) -- (v_{2i+1});
\node[ver] () at (-6,5.5){\tiny{for $i \in \{0, \dots, 66\}$}};
\end{scope}

\begin{scope}[shift={(-12,0)}]
\foreach \x/\y in {0/v_{1},60/v_{29},120/v_{93},180/v_{37},240/v_{115},300/v_{71}}{
\node[ver] (\y) at (\x:1.9){\tiny{$\y$}};
\node[vert] (\y) at (\x:2.5){};
}
\foreach \x/\y in {30/v_{20},90/v_{110},150/v_{92},210/v_{126},270/v_{108},330/v_{18}}{
\node[ver] (\y) at (\x:1.9){\tiny{$\y$}};
\node[vertex] (\y) at (\x:2.5){};}

\foreach \x/\y in {v_{1}/v_{20},v_{29}/v_{110},v_{93}/v_{92},v_{37}/v_{126},v_{115}/v_{108},v_{71}/v_{18},v_{20}/v_{29},v_{110}/v_{93},v_{92}/v_{37},v_{126}/v_{115},v_{108}/v_{71},v_{18}/v_{1}}{ 
\path[edge] (\x) -- (\y);}

\foreach \x/\y in {v_{1}/v_{20},v_{29}/v_{110},v_{93}/v_{92},v_{37}/v_{126},v_{115}/v_{108},v_{71}/v_{18}}{
\draw [line width=3pt, line cap=round, dash pattern=on 0pt off 2\pgflinewidth] 	(\x) -- (\y);}
\end{scope}

\begin{scope}[shift={(-6,0)}]
\foreach \x/\y in {0/v_{30},60/v_{90},120/v_{88},180/v_{86},240/v_{128},300/v_{124}}{
\node[ver] (\y) at (\x:1.9){\tiny{$\y$}};
\node[vertex] (\y) at (\x:2.5){};
}
\foreach \x/\y in {30/v_{111},90/v_{87},150/v_{89},210/v_{69},270/v_{133},330/v_{113}}{
\node[ver] (\y) at (\x:1.9){\tiny{$\y$}};
\node[vert] (\y) at (\x:2.5){};}

\foreach \x/\y in {v_{30}/v_{111},v_{90}/v_{87},v_{88}/v_{89},v_{86}/v_{69},v_{128}/v_{133},v_{124}/v_{113},v_{111}/v_{90},v_{87}/v_{88},v_{89}/v_{86},v_{69}/v_{128},v_{133}/v_{124},v_{113}/v_{30}}{ 
\path[edge] (\x) -- (\y);}

\foreach \x/\y in {v_{30}/v_{111},v_{90}/v_{87},v_{88}/v_{89},v_{86}/v_{69},v_{128}/v_{133},v_{124}/v_{113}}{
\draw [line width=3pt, line cap=round, dash pattern=on 0pt off 2\pgflinewidth] 	(\x) -- (\y);}
\end{scope}

\begin{scope}[shift={(0,0)}]
\foreach \x/\y in {0/v_{10},72/v_{78},144/v_{68},216/v_{130},288/v_{76}}{
\node[ver] (\y) at (\x:1.9){\tiny{$\y$}};
\node[vertex] (\y) at (\x:2.5){};
}
\foreach \x/\y in {36/v_{81},108/v_{129},180/v_{91},252/v_{57},324/v_{39}}{
\node[ver] (\y) at (\x:1.9){\tiny{$\y$}};
\node[vert] (\y) at (\x:2.5){};}

\foreach \x/\y in {v_{10}/v_{81},v_{78}/v_{129},v_{68}/v_{91},v_{130}/v_{57},v_{76}/v_{39},v_{81}/v_{78},v_{129}/v_{68},v_{91}/v_{130},v_{57}/v_{76},v_{39}/v_{10}}{ 
\path[edge] (\x) -- (\y);}

\foreach \x/\y in {v_{10}/v_{81},v_{78}/v_{129},v_{68}/v_{91},v_{130}/v_{57},v_{76}/v_{39}}{
\draw [line width=3pt, line cap=round, dash pattern=on 0pt off 2\pgflinewidth] 	(\x) -- (\y);}
\end{scope}

\begin{scope}[shift={(5.5,0)}]
\foreach \x/\y in {0/v_{3},90/v_{119},180/v_{25},270/v_{99}}{
\node[ver] (\y) at (\x:1.6){\tiny{$\y$}};
\node[vert] (\y) at (\x:2.2){};}

\foreach \x/\y in {45/v_{32},135/v_{118},225/v_{98},315/v_{28}}{
\node[ver] (\y) at (\x:1.6){\tiny{$\y$}};
\node[vertex] (\y) at (\x:2.2) {};}

\foreach \x/\y in {v_{32}/v_{3},v_{118}/v_{119},v_{98}/v_{25},v_{28}/v_{99},v_{3}/v_{28},v_{99}/v_{98},v_{25}/v_{118},v_{32}/v_{119}}{
\path[edge] (\x) -- (\y);}
\foreach \x/\y in {v_{32}/v_{3},v_{118}/v_{119},v_{98}/v_{25},v_{28}/v_{99}}{ 
\draw [line width=3pt, line cap=round, dash pattern=on 0pt off 2\pgflinewidth] 	(\x) -- (\y);}
\end{scope}

\begin{scope}[shift={(10.5,0)}]
\foreach \x/\y in {0/v_{24},90/v_{114},180/v_{72},270/v_{70}}{
\node[ver] (\y) at (\x:1.6){\tiny{$\y$}};
\node[vertex] (\y) at (\x:2.2){};}

\foreach \x/\y in {45/v_{117},135/v_{109},225/v_{75},315/v_{19}}{
\node[ver] (\y) at (\x:1.6){\tiny{$\y$}};
\node[vert] (\y) at (\x:2.2) {};}

\foreach \x/\y in {v_{117}/v_{24},v_{109}/v_{114},v_{75}/v_{72},v_{19}/v_{70},v_{24}/v_{19},v_{70}/v_{75},v_{72}/v_{109},v_{117}/v_{114}}{
\path[edge] (\x) -- (\y);}
\foreach \x/\y in {v_{117}/v_{24},v_{109}/v_{114},v_{75}/v_{72},v_{19}/v_{70}}{ 
\draw [line width=3pt, line cap=round, dash pattern=on 0pt off 2\pgflinewidth] 	(\x) -- (\y);}
\end{scope}

\begin{scope}[shift={(-9,-4)}]
\foreach \x/\y in {30/v_{0},150/v_{12},270/v_{2}}{
\node[ver] (\y) at (\x:1){\tiny{$\y$}};
\node[vertex] (\y) at (\x:1.6){};}

\foreach \x/\y in {90/v_{15},210/v_{27},330/v_{9}}{
\node[ver] (\y) at (\x:1){\tiny{$\y$}};
\node[vert] (\y) at (\x:1.6) {};}

\foreach \x/\y in {v_{0}/v_{15},v_{12}/v_{27},v_{2}/v_{9},v_{15}/v_{12},v_{27}/v_{2},v_{9}/v_{0}}{
\path[edge] (\x) -- (\y);}
\foreach \x/\y in {v_{0}/v_{15},v_{12}/v_{27},v_{2}/v_{9}}{
\draw [line width=3pt, line cap=round, dash pattern=on 0pt off 2\pgflinewidth] 	(\x) -- (\y);
}
\end{scope}

\begin{scope}[shift={(-3,-4)}]
\foreach \x/\y in {30/v_{5},150/v_{83},270/v_{17}}{
\node[ver] (\y) at (\x:1){\tiny{$\y$}};
\node[vert] (\y) at (\x:1.6){};}

\foreach \x/\y in {90/v_{52},210/v_{58},330/v_{50}}{
\node[ver] (\y) at (\x:1){\tiny{$\y$}};
\node[vertex] (\y) at (\x:1.6) {};}

\foreach \x/\y in {v_{5}/v_{52},v_{83}/v_{58},v_{17}/v_{50},v_{52}/v_{83},v_{58}/v_{17},v_{50}/v_{5}}{
\path[edge] (\x) -- (\y);}
\foreach \x/\y in {v_{5}/v_{52},v_{83}/v_{58},v_{17}/v_{50}}{
\draw [line width=3pt, line cap=round, dash pattern=on 0pt off 2\pgflinewidth] 	(\x) -- (\y);
}
\end{scope}

\begin{scope}[shift={(3,-4)}]
\foreach \x/\y in {30/v_{16},150/v_{94},270/v_{82}}{
\node[ver] (\y) at (\x:1){\tiny{$\y$}};
\node[vertex] (\y) at (\x:1.6){};}

\foreach \x/\y in {90/v_{97},210/v_{107},330/v_{59}}{
\node[ver] (\y) at (\x:1){\tiny{$\y$}};
\node[vert] (\y) at (\x:1.6) {};}

\foreach \x/\y in {v_{16}/v_{97},v_{94}/v_{107},v_{82}/v_{59},v_{97}/v_{94},v_{107}/v_{82},v_{59}/v_{16}}{
\path[edge] (\x) -- (\y);}
\foreach \x/\y in {v_{16}/v_{97},v_{94}/v_{107},v_{82}/v_{59}}{
\draw [line width=3pt, line cap=round, dash pattern=on 0pt off 2\pgflinewidth] 	(\x) -- (\y);
}
\end{scope}

\begin{scope}[shift={(8,-4)}]
\foreach \x/\y in {30/v_{40},150/v_{56},270/v_{54}}{
\node[ver] (\y) at (\x:1){\tiny{$\y$}};
\node[vertex] (\y) at (\x:1.6){};}

\foreach \x/\y in {90/v_{77},210/v_{55},330/v_{131}}{
\node[ver] (\y) at (\x:1){\tiny{$\y$}};
\node[vert] (\y) at (\x:1.6) {};}

\foreach \x/\y in {v_{40}/v_{77},v_{56}/v_{55},v_{54}/v_{131},v_{77}/v_{56},v_{55}/v_{54},v_{131}/v_{40}}{
\path[edge] (\x) -- (\y);}
\foreach \x/\y in {v_{40}/v_{77},v_{56}/v_{55},v_{54}/v_{131}}{
\draw [line width=3pt, line cap=round, dash pattern=on 0pt off 2\pgflinewidth] 	(\x) -- (\y);
}
\end{scope}

\begin{scope}[shift={(-12,-8)}]
\foreach \x/\y in {0/v_{66},120/v_{48},240/v_{74}}{
\node[ver] (\y) at (\x:1){\tiny{$\y$}};
\node[vertex] (\y) at (\x:1.6){};}

\foreach \x/\y in {60/v_{103},180/v_{105},300/v_{73}}{
\node[ver] (\y) at (\x:1){\tiny{$\y$}};
\node[vert] (\y) at (\x:1.6) {};}

\foreach \x/\y in {v_{66}/v_{103},v_{48}/v_{105},v_{74}/v_{73},v_{103}/v_{48},v_{105}/v_{74},v_{73}/v_{66}}{
\path[edge] (\x) -- (\y);}
\foreach \x/\y in {v_{66}/v_{103},v_{48}/v_{105},v_{74}/v_{73}}{
\draw [line width=3pt, line cap=round, dash pattern=on 0pt off 2\pgflinewidth] 	(\x) -- (\y);
}
\end{scope}

\begin{scope}[]
\foreach \x/\z/\w in {-14/v_{132}/v_{43},-12/v_{8}/v_{33},-6/v_{46}/v_{45},0/v_{106}/v_{53},5.5/v_{112}/v_{63},11/v_{116}/v_{101}}{
\node[ver] () at (\x,-3){\tiny{$\w$}};
\node[vert] (\w) at (\x,-3.4){};
\node[ver] () at (\x,-5.8){\tiny{$\z$}};
\node[vertex] (\z) at (\x,-5.4){};}

\foreach \x/\y/\z/\w in {v_{132}/v_{43}/-14.5/-13.5,v_{8}/v_{33}/-12.5/-11.5,v_{46}/v_{45}/-6.5/-5.5,v_{106}/v_{53}/-0.5/0.5,v_{112}/v_{63}/5/6,v_{116}/v_{101}/10.5/11.5}{
\draw[edge] plot [smooth,tension=1] coordinates{(\x)(\z,-4.4)(\y)};
\draw[edge] plot [smooth,tension=1] coordinates{(\x)(\w,-4.4)(\y)};
\draw[line width=3pt, line cap=round, dash pattern=on 0pt off 2\pgflinewidth] plot [smooth,tension=1] coordinates{(\x)(\z,-4.3)(\y)};}
\end{scope}

\begin{scope}[]
\foreach \x/\z/\w in {-9/v_{13}/v_{26},-5.5/v_{65}/v_{62},-2/v_{35}/v_{42},1.5/v_{85}/v_{80},5/v_{67}/v_{102},8.5/v_{31}/v_{122}}{
\node[ver] () at (\x,-7){\tiny{$\w$}};
\node[vertex] (\w) at (\x,-7.4){};
\node[ver] () at (\x,-9.8){\tiny{$\z$}};
\node[vert] (\z) at (\x,-9.4){};}

\foreach \x/\z/\w in {-6.5/v_{14}/v_{21},-3/v_{34}/v_{79},0.5/v_{64}/v_{125},4/v_{100}/v_{123},7.5/v_{104}/v_{49},11/v_{120}/v_{121}}{
\node[ver] () at (\x,-7){\tiny{$\w$}};
\node[vert] (\w) at (\x,-7.4){};
\node[ver] () at (\x,-9.8){\tiny{$\z$}};
\node[vertex] (\z) at (\x,-9.4){};}
\foreach \x/\y in {v_{13}/v_{26},v_{26}/v_{21},v_{21}/v_{14},v_{14}/v_{13},v_{34}/v_{79},v_{79}/v_{62},v_{62}/v_{65},v_{65}/v_{34},v_{42}/v_{35},v_{35}/v_{64},v_{64}/v_{125},v_{125}/v_{42},v_{80}/v_{85},v_{85}/v_{100},v_{100}/v_{123},v_{123}/v_{80},v_{67}/v_{102},v_{102}/v_{49},v_{49}/v_{104},v_{104}/v_{67},v_{31}/v_{122},v_{122}/v_{121},v_{121}/v_{120},v_{120}/v_{31}}{
\path[edge] (\x) -- (\y);}
\foreach \x/\y in {v_{13}/v_{26},v_{21}/v_{14},v_{34}/v_{79},v_{62}/v_{65},v_{42}/v_{35},v_{64}/v_{125},v_{80}/v_{85},v_{100}/v_{123},v_{67}/v_{102},v_{49}/v_{104},v_{31}/v_{122},v_{121}/v_{120}}{
\draw [line width=3pt, line cap=round, dash pattern=on 0pt off 2\pgflinewidth] 	(\x) -- (\y);
}
\end{scope}

\begin{scope}[]
\foreach \x/\z/\w in {-15/v_{4}/v_{7},-8/v_{44}/v_{51},-1/v_{84}/v_{61},6/v_{38}/v_{23},13.2/v_{36}/v_{95}}{
\node[ver] () at (\x,4){\tiny{$\z$}};
\node[vertex] (\w) at (\x,3.6){};
\node[ver] () at (\x,-11.4){\tiny{$\w$}};
\node[vert] (\z) at (\x,-11){};}

\foreach \x/\z/\w in {-11.5/v_{47}/v_{6},-4.5/v_{127}/v_{60},2.5/v_{11}/v_{22},9.5/v_{41}/v_{96}}{
\node[ver] () at (\x,4){\tiny{$\z$}};
\node[vert] (\w) at (\x,3.6){};
\node[ver] () at (\x,-11.4){\tiny{$\w$}};
\node[vertex] (\z) at (\x,-11){};}
\foreach \x/\y in {v_{4}/v_{47},v_{44}/v_{127},v_{84}/v_{11},v_{38}/v_{41},v_{36}/v_{95},v_{96}/v_{23},v_{22}/v_{61},v_{60}/v_{51},v_{6}/v_{7},
v_{47}/v_{44},v_{127}/v_{84},v_{11}/v_{38},v_{41}/v_{36},v_{95}/v_{96},v_{23}/v_{22},v_{61}/v_{60},v_{51}/v_{6},v_{7}/v_{4}}{
\path[edge] (\x) -- (\y);}

\foreach \x/\y in {v_{4}/v_{47},v_{44}/v_{127},v_{84}/v_{11},v_{38}/v_{41},v_{36}/v_{95},v_{96}/v_{23},v_{22}/v_{61},v_{60}/v_{51},v_{6}/v_{7}}{
\draw [line width=3pt, line cap=round, dash pattern=on 0pt off 2\pgflinewidth] 	(\x) -- (\y);}
\end{scope}
\begin{scope}[shift={(0,-8)}]
\foreach \x/\z/\w in {-14/v_{0}/v_{9},-11/v_{22}/v_{61},-8/v_{32}/v_{121},-5/v_{36}/v_{79},-2/v_{40}/v_{109},1/v_{50}/v_{103},4/v_{64}/v_{63},7/v_{80}/v_{123},10/v_{84}/v_{59}}{
\node[ver] () at (\x,-7){\tiny{$\w$}};
\node[vert] (\w) at (\x,-7.4){};
\node[ver] () at (\x,-9.8){\tiny{$\z$}};
\node[vertex] (\z) at (\x,-9.4){};}

\foreach \x/\z/\w in {-12/v_{7}/v_{2},-9/v_{27}/v_{24},-6/v_{67}/v_{122},-3/v_{95}/v_{76},0/v_{5}/v_{48},3/v_{131}/v_{130},6/v_{29}/v_{86},9/v_{57}/v_{116},12/v_{87}/v_{98}}{
\node[ver] () at (\x,-7){\tiny{$\w$}};
\node[vertex] (\w) at (\x,-7.4){};
\node[ver] () at (\x,-9.8){\tiny{$\z$}};
\node[vert] (\z) at (\x,-9.4){};}

\foreach \x/\y in {v_{9}/v_{2},v_{7}/v_{0},v_{61}/v_{24},v_{27}/v_{22},v_{121}/v_{122},v_{67}/v_{32},v_{79}/v_{76},v_{95}/v_{36},v_{109}/v_{48},
v_{5}/v_{40},v_{103}/v_{130},v_{131}/v_{50},v_{64}/v_{29},v_{86}/v_{63},v_{123}/v_{116},v_{57}/v_{80},v_{84}/v_{87},v_{98}/v_{59}}{
\path[edge, dashed] (\x) -- (\y);}
\foreach \x/\y in {v_{0}/v_{9},v_{2}/v_{7},v_{22}/v_{61},v_{24}/v_{27},v_{32}/v_{121},v_{122}/v_{67},v_{36}/v_{79},v_{76}/v_{95},v_{40}/v_{109},
v_{48}/v_{5},v_{50}/v_{103},v_{130}/v_{131},v_{63}/v_{64},v_{29}/v_{86},v_{80}/v_{123},v_{116}/v_{57},v_{59}/v_{84},v_{87}/v_{98}}{
\path[edge, dotted] 	(\x) -- (\y);}

\foreach \x/\z/\w in {-15/v_{1}/v_{16},-7.5/v_{65}/v_{92},0/v_{85}/v_{124},8/v_{51}/v_{112}}{
\node[ver] () at (\x,-5.8){\tiny{$\z$}};
\node[vert] (\w) at (\x,-6.2){};
\node[ver] () at (\x,-11.4){\tiny{$\w$}};
\node[vertex] (\z) at (\x,-11){};}

\foreach \x/\z/\w in {-11.5/v_{8}/v_{97},-4/v_{62}/v_{69},4/v_{58}/v_{113},13.2/v_{60}/v_{83}}{
\node[ver] () at (\x,-5.8){\tiny{$\z$}};
\node[vertex] (\w) at (\x,-6.2){};
\node[ver] () at (\x,-11.4){\tiny{$\w$}};
\node[vert] (\z) at (\x,-11){};}

\foreach \x/\y in {v_{8}/v_{65},v_{62}/v_{85},v_{58}/v_{51},v_{60}/v_{83},v_{112}/v_{113},v_{124}/v_{69},v_{92}/v_{97},v_{16}/v_{1}}{
\path[edge, dashed] (\x) -- (\y);}
\foreach \x/\y in {v_{1}/v_{8},v_{65}/v_{62},v_{85}/v_{58},v_{51}/v_{60},v_{83}/v_{112},v_{113}/v_{124},v_{69}/v_{92},v_{97}/v_{16}}{
\path[edge, dotted] 	(\x) -- (\y);}

\foreach \x/\z/\w in {-15/v_{14}/v_{11},-7.5/v_{46}/v_{25},0/v_{18}/v_{107},8/v_{52}/v_{71}}{
\node[ver] () at (\x,-11.8){\tiny{$\z$}};
\node[vertex] (\w) at (\x,-12.2){};
\node[ver] () at (\x,-22.4){\tiny{$\w$}};
\node[vert] (\z) at (\x,-22){};}

\foreach \x/\z/\w in {-11.5/v_{89}/v_{82},-4/v_{49}/v_{118},4/v_{19}/v_{106},13.2/v_{35}/v_{38}}{
\node[ver] () at (\x,-11.8){\tiny{$\z$}};
\node[vert] (\w) at (\x,-12.2){};
\node[ver] () at (\x,-22.4){\tiny{$\w$}};
\node[vertex] (\z) at (\x,-22){};}

\foreach \x/\y in {v_{89}/v_{46},v_{49}/v_{18},v_{19}/v_{52},v_{35}/v_{38},v_{71}/v_{106},v_{107}/v_{118},v_{25}/v_{82},v_{11}/v_{14}}{
\path[edge, dashed] (\x) -- (\y);}
\foreach \x/\y in {v_{14}/v_{89},v_{46}/v_{49},v_{18}/v_{19},v_{52}/v_{35},v_{38}/v_{71},v_{106}/v_{107},v_{118}/v_{25},v_{82}/v_{11}}{
\path[edge, dotted]	(\x) -- (\y);}
\end{scope}

\begin{scope}[shift={(-12,-23)}]
\foreach \x/\y in {0/v_{4},90/v_{42},180/v_{70},270/v_{34}}{
\node[ver] (\y) at (\x:1.6){\tiny{$\y$}};
\node[vertex] (\y) at (\x:2.2){};}

\foreach \x/\y in {45/v_{47},135/v_{43},225/v_{39},315/v_{41}}{
\node[ver] (\y) at (\x:1.6){\tiny{$\y$}};
\node[vert] (\y) at (\x:2.2) {};}

\foreach \x/\y in {v_{4}/v_{41},v_{34}/v_{39},v_{70}/v_{43},v_{47}/v_{42}}{
\path[edge, dashed] (\x) -- (\y);}
\foreach \x/\y in {v_{47}/v_{4},v_{43}/v_{42},v_{39}/v_{70},v_{41}/v_{34}}{ 
\path[edge, dotted] 	(\x) -- (\y);}
\end{scope}

\begin{scope}[shift={(-6.1,-23)}]
\foreach \x/\y in {0/v_{10},90/v_{68},180/v_{72},270/v_{12}}{
\node[ver] (\y) at (\x:1.6){\tiny{$\y$}};
\node[vertex] (\y) at (\x:2.2){};}

\foreach \x/\y in {45/v_{75},135/v_{91},225/v_{73},315/v_{15}}{
\node[ver] (\y) at (\x:1.6){\tiny{$\y$}};
\node[vert] (\y) at (\x:2.2) {};}

\foreach \x/\y in {v_{10}/v_{15},v_{12}/v_{73},v_{72}/v_{91},v_{75}/v_{68}}{
\path[edge, dashed] (\x) -- (\y);}
\foreach \x/\y in {v_{75}/v_{10},v_{91}/v_{68},v_{73}/v_{72},v_{15}/v_{12}}{ 
\path[edge, dotted] 	(\x) -- (\y);}
\end{scope}

\begin{scope}[shift={(-0.2,-23)}]
\foreach \x/\y in {0/v_{20},90/v_{90},180/v_{110},270/v_{26}}{
\node[ver] (\y) at (\x:1.6){\tiny{$\y$}};
\node[vertex] (\y) at (\x:2.2){};}

\foreach \x/\y in {45/v_{21},135/v_{111},225/v_{93},315/v_{13}}{
\node[ver] (\y) at (\x:1.6){\tiny{$\y$}};
\node[vert] (\y) at (\x:2.2) {};}

\foreach \x/\y in {v_{20}/v_{13},v_{26}/v_{93},v_{110}/v_{111},v_{21}/v_{90}}{
\path[edge, dashed] (\x) -- (\y);}
\foreach \x/\y in {v_{21}/v_{20},v_{111}/v_{90},v_{93}/v_{110},v_{13}/v_{26}}{ 
\path[edge, dotted] 	(\x) -- (\y);}
\end{scope}

\begin{scope}[shift={(5.2,-23)}]
\foreach \x/\y in {0/v_{33},90/v_{125},180/v_{117},270/v_{81}}{
\node[ver] (\y) at (\x:1.6){\tiny{$\y$}};
\node[vert] (\y) at (\x:2.2){};}

\foreach \x/\y in {45/v_{44},135/v_{74},225/v_{56},315/v_{66}}{
\node[ver] (\y) at (\x:1.6){\tiny{$\y$}};
\node[vertex] (\y) at (\x:2.2) {};}

\foreach \x/\y in {v_{33}/v_{66},v_{81}/v_{56},v_{117}/v_{74},v_{44}/v_{125}}{
\path[edge, dashed] (\x) -- (\y);}
\foreach \x/\y in {v_{44}/v_{33},v_{74}/v_{125},v_{56}/v_{117},v_{66}/v_{81}}{ 
\path[edge, dotted] 	(\x) -- (\y);}
\end{scope}

\begin{scope}[shift={(10.5,-23)}]
\foreach \x/\y in {0/v_{94},90/v_{104},180/v_{126},270/v_{128}}{
\node[ver] (\y) at (\x:1.6){\tiny{$\y$}};
\node[vertex] (\y) at (\x:2.2){};}

\foreach \x/\y in {45/v_{115},135/v_{119},225/v_{133},315/v_{37}}{
\node[ver] (\y) at (\x:1.6){\tiny{$\y$}};
\node[vert] (\y) at (\x:2.2) {};}

\foreach \x/\y in {v_{94}/v_{37},v_{128}/v_{133},v_{126}/v_{119},v_{115}/v_{104}}{
\path[edge, dashed] (\x) -- (\y);}
\foreach \x/\y in {v_{115}/v_{94},v_{119}/v_{104},v_{133}/v_{126},v_{37}/v_{128}}{ 
\path[edge, dotted] 	(\x) -- (\y);}
\end{scope}

\begin{scope}[shift={(-9,-26.5)}]
\foreach \x/\y in {30/v_{28},150/v_{102},270/v_{30}}{
\node[ver] (\y) at (\x:1){\tiny{$\y$}};
\node[vertex] (\y) at (\x:1.6){};}

\foreach \x/\y in {90/v_{99},210/v_{31},330/v_{3}}{
\node[ver] (\y) at (\x:1){\tiny{$\y$}};
\node[vert] (\y) at (\x:1.6) {};}

\foreach \x/\y in {v_{99}/v_{102},v_{31}/v_{30},v_{3}/v_{28}}{
\path[edge, dashed] (\x) -- (\y);}
\foreach \x/\y in {v_{28}/v_{99},v_{102}/v_{31},v_{30}/v_{3}}{
\path[edge, dotted] (\x) -- (\y);
}
\end{scope}

\begin{scope}[shift={(-3,-26.5)}]
\foreach \x/\y in {30/v_{78},150/v_{132},270/v_{114}}{
\node[ver] (\y) at (\x:1){\tiny{$\y$}};
\node[vertex] (\y) at (\x:1.6){};}

\foreach \x/\y in {90/v_{129},210/v_{105},330/v_{77}}{
\node[ver] (\y) at (\x:1){\tiny{$\y$}};
\node[vert] (\y) at (\x:1.6) {};}

\foreach \x/\y in {v_{129}/v_{132},v_{105}/v_{114},v_{77}/v_{78}}{
\path[edge, dashed] (\x) -- (\y);}
\foreach \x/\y in {v_{78}/v_{129},v_{132}/v_{105},v_{114}/v_{77}}{
\path[edge, dotted] (\x) -- (\y);
}
\end{scope}

\begin{scope}[shift={(3,-26.5)}]
\foreach \x/\y in {30/v_{23},150/v_{17},270/v_{101}}{
\node[ver] (\y) at (\x:1){\tiny{$\y$}};
\node[vert] (\y) at (\x:1.6){};}

\foreach \x/\y in {90/v_{54},210/v_{100},330/v_{96}}{
\node[ver] (\y) at (\x:1){\tiny{$\y$}};
\node[vertex] (\y) at (\x:1.6) {};}

\foreach \x/\y in {v_{54}/v_{17},v_{100}/v_{101},v_{96}/v_{23}}{
\path[edge, dashed] (\x) -- (\y);}
\foreach \x/\y in {v_{23}/v_{54},v_{17}/v_{100},v_{101}/v_{96}}{
\path[edge, dotted] (\x) -- (\y);
}
\end{scope}

\begin{scope}[shift={(0,2)}]
\foreach \x/\z/\w in {-12/v_{6}/v_{55},-6.1/v_{88}/v_{127},-0.2/v_{108}/v_{53},7.2/v_{120}/v_{45}}{
\node[ver] () at (\x,-28){\tiny{$\w$}};
\node[vert] (\w) at (\x,-28.5){};
\node[ver] () at (\x,-30.9){\tiny{$\z$}};
\node[vertex] (\z) at (\x,-30.5){};}

\foreach \x/\y/\z/\w in {v_{6}/v_{55}/-12.5/-11.5,v_{88}/v_{127}/-6.6/-5.6,v_{108}/v_{53}/-0.7/0.3,v_{120}/v_{45}/6.7/7.7}{
\draw[edge, dotted] plot [smooth,tension=1] coordinates{(\x)(\w,-29.5)(\y)};
\draw[edge, dashed] plot [smooth,tension=1] coordinates{(\x)(\z,-29.5)(\y)};}

\foreach \x/\y/\z/\w in {-15/-12/0/5,-11/-8/1/6,-7/-4/2/7,-3/0/3/8,1/4/4/9}{
\node[ver] (\z) at (\x,-34){\tiny{$\z$}};
\node[ver] (\w) at (\y,-34){\tiny{}};}
\path[edge] (0) -- (5);
\path[edge] (1) -- (6);
\path[edge] (2) -- (7);
\path[edge, dotted] (3) -- (8);
\path[edge, dashed] (4) -- (9);
\draw[line width=2pt, line cap=rectengle, dash pattern=on 1pt off 1] (0) -- (5);
\draw [line width=3pt, line cap=round, dash pattern=on 0pt off 2\pgflinewidth] 	(1) -- (6);

\node[ver] () at (0,-36){$\Gamma^3= \Gamma^3_{\{0\}} \cup \Gamma^3_{\{1,2\}} \cup \Gamma^3_{\{3,4\}}$};
\end{scope}

\end{tikzpicture}
\caption{A simple crystallization of the K3 surface. Note that both $\Gamma^3_{\{1,2\}}$ and $\Gamma^3_{\{3,4\}}$
	have $23$ connected components and both become connected if we add the $0$-colored edges, that is,
	if we connect $v_{2i}$ to $v_{2i+1}$ for $0 \leq i \leq 66$.\label{fig:G3} }
\end{figure}

\section{Simple crystallizations of pairs of homeomorphic but not PL-homeomorphic 4-manifolds}

In Sections \ref{eg:cp2}, \ref{eg:s2s2}, and  \ref{eg:k3} we constructed simple crystallizations of  $\mathbb{CP}^2$,
$S^2 \times S^2$ and the K3 surface respectively. Now using Lemma~\ref{Lemma:connected-sum}, we can construct simple 
crystallizations of simply connected $4$-manifolds of type $k \,\mathbb{CP}^2 \,\, \#  \,\, l \,\overline{\mathbb{CP}^2}$ 
and $m \, K3 \,\, \# \,\, r \, S^2 \times S^2$, for all $k,l,m,r \geq 0$. As of today, these are all known topological types
of simply connected $4$-manifolds which allow at least one PL structure.

The following result about connected sums of simply connected $4$-manifolds is due to Wall.

\begin{theo}[Wall \cite{Wall64SimpConn4Mflds}]
	\label{thm:wall}
	Let $\mathbb{M}$ and $\mathbb{N}$ be two simply connected closed PL $4$-manifolds with isomorphic
	intersection forms. Then there exist a $k \geq 0$ such that
	$\mathbb{M} \# k(S^2 \times S^2)$ and $\mathbb{N} \# k(S^2 \times S^2)$ are PL-homeomorphic.
\end{theo}

Furthermore, it is known that $k$ in the above theorem is not always equal to zero.

\begin{theo}[Kronheimer and Mrowka \cite{Kronheimer94RecRels4MfldInvs}]
	\label{thm:exotic}
	$$K3 \,\,\#\,\, \overline{\mathbb{CP}^2} \qquad \not \cong_{PL} \qquad 3 \,\mathbb{CP}^2 \,\,\#\,\, 20 \, \overline{\mathbb{CP}^2}.$$
\end{theo}

Note that $K3 \,\#\, \overline{\mathbb{CP}^2}$ and $3 \,\mathbb{CP}^2 \,\#\, 20 \, \overline{\mathbb{CP}^2}$
have isomorphic odd intersection forms and are thus homeomorphic by Theorem~\ref{Freedman1}.

Theorem~\ref{thm:exotic} was proven by computing the Donaldson polynomial \cite{Donaldson90DonPoly} for both
manifolds. The Donaldson polynomial is a powerful PL-homeomorphism invariant
to tell homeomorphic but not PL-homeomorphic $4$-manifolds apart.
However, it is usually very hard to compute for a pair of given manifolds.
Now, using the connected-sum property of simple contracted pseudotriangulations (cf. Lemma~\ref{Lemma:connected-sum})
it follows that

\begin{cor}
	\label{cor:nondiff}
	There is a pair of simple contracted pseudotriangulations of homeomorphic
	but non-PL-homeomorphic simply connected $4$-manifolds.
\end{cor}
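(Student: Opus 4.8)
The plan is to assemble the desired pair directly from the building blocks constructed in Sections~\ref{eg:cp2} and~\ref{eg:k3}, using the connected-sum stability of Lemma~\ref{Lemma:connected-sum}, and then to certify the topology via Freedman (Theorem~\ref{Freedman1}) and Kronheimer--Mrowka (Theorem~\ref{thm:exotic}). First I would fix the simple crystallization $(\Gamma^1,\gamma^1)$ of $\mathbb{CP}^2$ from Section~\ref{ssec:cp2} and the simple crystallization $(\Gamma^3,\gamma^3)$ of the K3 surface from Section~\ref{eg:k3}. A crystallization records no orientation, so $(\Gamma^1,\gamma^1)$ equally represents $\overline{\mathbb{CP}^2}$. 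Applying Lemma~\ref{Lemma:connected-sum} once (with $k=1$, $d=4$) to $(\Gamma^3,\gamma^3)$ and $(\Gamma^1,\gamma^1)$ produces a simple crystallization of $K3\,\#\,\overline{\mathbb{CP}^2}$; iterating Lemma~\ref{Lemma:connected-sum} another $22$ times on three copies of $(\Gamma^1,\gamma^1)$ (read as $\mathbb{CP}^2$) and twenty copies of $(\Gamma^1,\gamma^1)$ (read as $\overline{\mathbb{CP}^2}$) produces a simple crystallization of $3\,\mathbb{CP}^2\,\#\,20\,\overline{\mathbb{CP}^2}$. Passing to the associated simplicial cell complexes gives two simple contracted pseudotriangulations, one of each manifold.

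Next I would check the required topological relationship between the two underlying spaces. Both are simply connected closed PL $4$-manifolds whose intersection forms are odd, indefinite and unimodular: by Proposition~\ref{prop:connSum} and Example~\ref{eg:intersection-form} the form of $3\,\mathbb{CP}^2\,\#\,20\,\overline{\mathbb{CP}^2}$ is $3[1]\oplus 20[-1]$, while the form of $K3\,\#\,\overline{\mathbb{CP}^2}$ is $(-E_8)\oplus(-E_8)\oplus 3H\oplus[-1]$, which by diagonalizability of odd indefinite unimodular forms is isomorphic to $3[1]\oplus 20[-1]$ as well (this is exactly the observation recorded in the remark immediately preceding this corollary). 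Hence the two intersection forms are isomorphic; since the common form is odd and both manifolds admit PL structures, Theorem~\ref{Freedman1} forces them to be homeomorphic (in the odd case there are exactly two homeomorphism types but at most one of them is PL). On the other hand, Theorem~\ref{thm:exotic} asserts precisely that $K3\,\#\,\overline{\mathbb{CP}^2}$ and $3\,\mathbb{CP}^2\,\#\,20\,\overline{\mathbb{CP}^2}$ are not PL-homeomorphic. Thus the two simple contracted pseudotriangulations constructed above triangulate homeomorphic but non-PL-homeomorphic simply connected $4$-manifolds, proving the corollary.

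Every step is an invocation of a result already established in the paper or quoted from the literature, so there is no real technical obstacle; the only two points deserving a sentence of care are that taking a crystallization discards orientation (one copy of $(\Gamma^1,\gamma^1)$ serves for both $\mathbb{CP}^2$ and $\overline{\mathbb{CP}^2}$, so the connected sums built above are legitimate) and that the intersection-form identification uses the classification of indefinite unimodular forms rather than being literally term-by-term. I would remark in passing that any pair known via gauge-theoretic invariants to be homeomorphic but not PL-homeomorphic would do — e.g.\ combining Theorem~\ref{thm:wall} with Theorem~\ref{thm:exotic} — and that the Kronheimer--Mrowka pair is simply the most economical choice given the crystallizations already in hand.
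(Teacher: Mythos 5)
Your proposal is correct and follows essentially the same route as the paper: build simple crystallizations of $K3\,\#\,\overline{\mathbb{CP}^2}$ and $3\,\mathbb{CP}^2\,\#\,20\,\overline{\mathbb{CP}^2}$ via Lemma~\ref{Lemma:connected-sum}, then invoke Freedman (Theorem~\ref{Freedman1}) for the homeomorphism and Kronheimer--Mrowka (Theorem~\ref{thm:exotic}) for the failure of PL-homeomorphism. Your added remarks on orientation and on the classification of odd indefinite unimodular forms are accurate elaborations of points the paper leaves implicit.
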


Since simple contracted pseudotriangulations can be regarded as (strongly) minimal
pseudotriangulations this also addresses a number of problems posed in \cite{Spreer09CombPropsOfK3}
in a pseudotriangular setting.

\begin{remark}
Proposition~\ref{prop:iforms} together with Freedman's
classification theorem (cf. Theorem~\ref{Freedman1}) gives us many
homeomorphic pairs of simply connected $4$-manifolds with
distinct connected sum decompositions: For each simply connected
PL $4$-manifold with even intersection form build the connected
sum with $\mathbb{CP}^2$ or $\overline{\mathbb{CP}^2}$ and
compare it to the suitable connected sum of the form 
$k \mathbb{CP}^2 \,\#\, l \overline{\mathbb{CP}^2}$. Hence, we can assume that more pairs of
simply connected PL $4$-manifolds exist which (i) can be build from our
simple crystallizations of $\mathbb{CP}^2$, $S^2 \times S^2 $, and
$K3$ (i.e., which are simply connected PL $4$-manifolds of ``standard type'') 
and which  (ii) require $k > 0$ in Theorem~\ref{thm:wall}.
\end{remark}

\medskip
The code for producing these pairs of homeomorphic but not PL-homeomorphic connected $4$-manifolds 
is available from the authors upon request.

%

\section*{Acknowledgements}

The authors express their gratitude to Prof. Basudeb Datta for helpful comments.
In particular, the Definition~\ref{defn:simple} and Lemma~\ref{Lemma:simple-b} 
are due to him. Furthermore, the authors want to thank the anonymous referees
for insightful and helpful remarks about this work. 

The first author 
is supported by CSIR, India for SPM Fellowship and the UGC Centre for Advanced 
Studies. The second author is supported by DIICCSRTE, Australia and DST, 
India, under the Australia-India Strategic Research Fund (project AISRF06660).

{\footnotesize
 \bibliographystyle{abbrv}
 \bibliography{/home/jonathan/bibliography/bibliography}

\begin{thebibliography}{10}

\bibitem{Basak13MinCryst}
B.~Basak and B.~Datta.
\newblock Minimal crystallizations of 3-manifolds.
\newblock {\em Electron. J. Combin.}, 21(1):\# P 1.61, 1--25, 2014.

\bibitem{Bjoerner00SimplMnfBistellarFlips}
A.~Bj{\"o}rner and F.~H. Lutz.
\newblock {S}implicial manifolds, bistellar flips and a 16-vertex triangulation
  of the {P}oincar{\'e} homology 3-sphere.
\newblock {\em Experiment. Math.}, 9(2):275--289, 2000.

\bibitem{Bondy08GraphTheory}
J.~A. Bondy and U.~S.~R. Murty.
\newblock {\em Graph theory}, volume 244 of {\em Graduate Texts in
  Mathematics}.
\newblock Springer, New York, 2008.

\bibitem{Burton11Census}
B.~A. Burton.
\newblock Detecting genus in vertex links for the fast enumeration of
  3-manifold triangulations.
\newblock In {\em {ISSAC} 2011: Proceedings of the 36th International Symposium
  on Symbolic and Algebraic Computation}, pages 59--66. ACM, 2011.

\bibitem{Burton11PachnerGraph}
B.~A. Burton.
\newblock The {P}achner graph and the simplification of 3-sphere
  triangulations.
\newblock In {\em Computational geometry ({SCG}'11)}, pages 153--162. ACM, New
  York, 2011.

\bibitem{Burton12CompTopWRegina}
B.~A. Burton.
\newblock Computational topology with regina: Algorithms, heuristics and
  implementations.
\newblock In {\em Geometry \& Topology Down Under}, pages 195--224. American
  Mathematical Society, 2012.

\bibitem{Burton09Regina}
B.~A. Burton, R.~Budney, W.~Pettersson, et~al.
\newblock Regina: normal surface and 3-manifold topology software, version
  4.95.
\newblock {\tt http://\allowbreak regina.\allowbreak sourceforge.\allowbreak
  net/}, 1999--2013.

\bibitem{Burton13CombinatorialDiffeomorphisms}
B.~A. Burton and J.~Spreer.
\newblock Computationally proving triangulated $4$-manifolds to be
  diffeomorphic.
\newblock \texttt{arXiv:1403.2780 [math.GT]}, 2013.
\newblock 29th ACM Symposium on Computational Geometry, Young Researchers
  Forum, Collections of abstracts, 2013, pages 15--16.

\bibitem{Burton144Mflds}
B.~A. Burton and J.~Spreer.
\newblock Combinatorial diffeomorphisms for triangulated $4$-manifolds, 2014.
\newblock In preparation.

\bibitem{Casali12Crystallizations}
M.~R. Casali.
\newblock Catalogues of {PL}-manifolds and complexity estimations via
  crystallization theory.
\newblock In {\em Triangulations}, volume~24 of {\em Oberwolfach Report}, pages
  1469--1471. {EMS Publishing House}, 2012.

\bibitem{CasaliDUKEIII}
M.~R. Casali and P.~Cristofori.
\newblock {DUKE III: A program to handle edge-coloured graphs representing PL
  $n$-dimensional manifolds}.
\newblock
  \texttt{http://cdm.unimo.it/home/matematica/casali.mariarita/DUKEIII.htm},
  2012.

\bibitem{Casali13ColouredGraphs}
M.~R. Casali and P.~Cristofori.
\newblock Coloured graphs representing pl 4-manifolds.
\newblock {\em Electronic Notes in Discrete Mathematics}, pages 83--87, 2013.

\bibitem{Casali14Cataloguing}
M.~R. Casali and P.~Cristofori.
\newblock Cataloguing pl 4-manifolds by gem-complexity.
\newblock \texttt{arXiv:1408.0378v1 [math.GT]}, 2014.
\newblock Preprint, 23 pages, 5 figures.

\bibitem{Casali14GemCompl}
M.~R. Casali, P.~Cristofori, and C.~Gagliardi.
\newblock A characterization of {PL} $4$-manifolds admitting simple
  crystallizations.
\newblock \texttt{arXiv:1410.3321v1 [math.GT]}, 2014.
\newblock Preprint, 12 pages.

\bibitem{Casali02CodeForMBiPartiteColouredGraphs}
M.~R. Casali and C.~Gagliardi.
\newblock A code for {$m$}-bipartite edge-coloured graphs.
\newblock {\em Rend. Istit. Mat. Univ. Trieste}, 32(suppl. 1):55--76 (2002),
  2001.
\newblock Dedicated to the memory of Marco Reni.

\bibitem{Casella01TrigK3MinNumVert}
M.~Casella and W.~K{\"u}hnel.
\newblock {A} triangulated {$K3$} surface with the minimum number of vertices.
\newblock {\em Topology}, 40(4):753--772, 2001.

\bibitem{Cavicchioli80NormDecClsdNMflds}
A.~Cavicchioli, L.~Grasselli, and M.~Pezzana.
\newblock A normal decomposition for closed {$n$}-manifolds.
\newblock {\em Boll. Un. Mat. Ital. B (5)}, 17(3):1146--1165, 1980.

\bibitem{Cavicchioli08ClassComb4Mflds}
A.~Cavicchioli and F.~Spaggiari.
\newblock Classifying combinatorial 4-manifolds up to complexity.
\newblock {\em Bol. Soc. Mat. Mexicana (3)}, 14(2):303--319, 2008.

\bibitem{Chiavacci93LinkingMinTrig}
R.~Chiavacci, P.~Cristofori, and C.~Gagliardi.
\newblock Linking two minimal triangulations of {${\bf C}{\rm P}^2$}.
\newblock In {\em Proceedings of the {E}leventh {I}nternational {C}onference of
  {T}opology ({T}rieste, 1993)}, volume~25, pages 127--140 (1994), 1993.

\bibitem{Donaldson83GaugeTheory4Mflds}
S.~K. Donaldson.
\newblock {A}n application of gauge theory to four-dimensional topology.
\newblock {\em J. Differential Geom.}, 18(2):279--315, 1983.

\bibitem{Donaldson90DonPoly}
S.~K. Donaldson.
\newblock Polynomial invariants for smooth four-manifolds.
\newblock {\em Topology}, 29(3):257--315, 1990.

\bibitem{simpcomp}
F.~Effenberger and J.~Spreer.
\newblock simpcomp - a {GAP} package, {V}ersion 2.0.0.
\newblock \url{https://code.google.com/p/simpcomp}, 2009--2014.

\bibitem{simpcompISSAC}
F.~Effenberger and J.~Spreer.
\newblock simpcomp - a {GAP} toolbox for simplicial complexes.
\newblock {\em ACM Communications in Computer Algebra}, 44(4):186 -- 189, 2010.

\bibitem{simpcompISSAC11}
F.~Effenberger and J.~Spreer.
\newblock Simplicial blowups and discrete normal surfaces in the {GAP} package
  simpcomp.
\newblock {\em ACM Communications in Computer Algebra}, 45(3):173 -- 176, 2011.

\bibitem{Ferri82S2xS2}
M.~Ferri and C.~Gagliardi.
\newblock On the genus of {$4$}-dimensional products of manifolds.
\newblock {\em Geom. Dedicata}, 13(3):331--345, 1982.

\bibitem{Ferri86GraphTheoryCrystallizations}
M.~Ferri, C.~Gagliardi, and L.~Grasselli.
\newblock A graph-theoretical representation of {PL}-manifolds---a survey on
  crystallizations.
\newblock {\em Aequationes Math.}, 31(2-3):121--141, 1986.

\bibitem{Freedman82Top4DimMnf}
M.~Freedman.
\newblock {T}he topology of four-dimensional manifolds.
\newblock {\em J. Differential Geom.}, 17:357--453, 1982.

\bibitem{Freedman90TopOf4Mflds}
M.~H. Freedman and F.~Quinn.
\newblock {\em {T}opology of 4-manifolds}, volume~39 of {\em Princeton
  Mathematical Series}.
\newblock Princeton University Press, Princeton, NJ, 1990.

\bibitem{Furuta01MonopoleEq}
M.~Furuta.
\newblock Monopole equation and the {$\frac{11}8$}-conjecture.
\newblock {\em Math. Res. Lett.}, 8(3):279--291, 2001.

\bibitem{Gagliardo79CombCritCrystallizations}
C.~Gagliardi.
\newblock A combinatorial characterization of {$3$}-manifold crystallizations.
\newblock {\em Boll. Un. Mat. Ital. A (5)}, 16(3):441--449, 1979.

\bibitem{Gagliardi79FundGrpClsdNMfld}
C.~Gagliardi.
\newblock How to deduce the fundamental group of a closed {$n$}-manifold from a
  contracted triangulation.
\newblock {\em J. Combin. Inform. System Sci.}, 4(3):237--252, 1979.

\bibitem{Gagliardo89CP2}
C.~Gagliardi.
\newblock On the genus of the complex projective plane.
\newblock {\em Aequationes Math.}, 37(2-3):130--140, 1989.

\bibitem{Gompf}
R.~E. Gompf and A.~I. Stipsicz.
\newblock {\em {$4$}-manifolds and {K}irby calculus}, volume~20 of {\em
  Graduate Studies in Mathematics}.
\newblock American Mathematical Society, Providence, RI, 1999.

\bibitem{Hatcher2002AlgTop}
A.~Hatcher.
\newblock {\em Algebraic Topology}.
\newblock Cambridge University Press, 2002.

\bibitem{Jaco03ZeroEffTriang}
W.~Jaco and J.~H. Rubinstein.
\newblock {$0$}-efficient triangulations of 3-manifolds.
\newblock {\em J. Differential Geom.}, 65(1):61--168, 2003.

\bibitem{Kronheimer94RecRels4MfldInvs}
P.~B. Kronheimer and T.~S. Mrowka.
\newblock Recurrence relations and asymptotics for four-manifold invariants.
\newblock {\em Bull. Amer. Math. Soc. (N.S.)}, 30(2):215--221, 1994.

\bibitem{Kuehnel83The9VertComplProjPlane}
W.~K{\"u}hnel and T.~F. Banchoff.
\newblock {T}he {$9$}-vertex complex projective plane.
\newblock {\em Math. Intelligencer}, 5(3):11--22, 1983.

\bibitem{Kuehnel83Uniq3Nb4MnfFewVert}
W.~K{\"u}hnel and G.~Lassmann.
\newblock {T}he unique {$3$}-neighborly {$4$}-manifold with few vertices.
\newblock {\em J. Combin. Theory Ser. A}, 35(2):173--184, 1983.

\bibitem{Lutz03TrigMnfFewVertVertTrans}
F.~H. Lutz.
\newblock {\em {T}riangulated manifolds with few vertices and vertex-transitive
  group actions}.
\newblock PhD thesis, TU Berlin, Aachen, 1999.

\bibitem{Lutz11TrigMnflds}
F.~H. Lutz.
\newblock Triangulating manifolds.
\newblock Springer, in press, ISBN 978-3-540-34502-2, 2014.

\bibitem{Matsumoto82ElevenEight}
Y.~Matsumoto.
\newblock On the bounding genus of homology {$3$}-spheres.
\newblock {\em J. Fac. Sci. Univ. Tokyo Sect. IA Math.}, 29(2):287--318, 1982.

\bibitem{McDuff95SymplecticTop}
D.~McDuff and D.~Salamon.
\newblock {\em Introduction to symplectic topology}.
\newblock Oxford Mathematical Monographs. The Clarendon Press, Oxford
  University Press, New York, 1995.
\newblock Oxford Science Publications.

\bibitem{Milnor73SymmBilForms}
J.~Milnor and D.~Husemoller.
\newblock {\em {S}ymmetric bilinear forms}, volume~73 of {\em {E}rgebnisse der
  {M}athematik und ihrer {G}renzgebiete}.
\newblock Springer-Verlag, New York, 1973.

\bibitem{Perelman07PC}
J.~Morgan and G.~Tian.
\newblock {\em Ricci flow and the {P}oincar\'e conjecture}, volume~3 of {\em
  Clay Mathematics Monographs}.
\newblock American Mathematical Society, Providence, RI, 2007.

\bibitem{Pachner87KonstrMethKombHomeo}
U.~Pachner.
\newblock {K}onstruktionsmethoden und das kombinatorische
  {H}o\-m\"{o}o\-mor\-phie\-pro\-blem f\"{u}r {T}ri\-an\-gul\-ier\-ung\-en
  kompakter semilinearer {M}annigfaltigkeiten.
\newblock {\em Abh. Math. Sem. Uni. Hamburg}, 57:69--86, 1987.

\bibitem{Pezzana74Crystallizations}
M.~Pezzana.
\newblock Sulla struttura topologica delle variet\`a compatte.
\newblock {\em Ati Sem. Mat. Fis. Univ. Modena}, 23(1):269--277 (1975), 1974.

\bibitem{Rohlin84NewResults4Mflds}
V.~A. Rohlin.
\newblock {N}ew results in the theory of four-dimensional manifolds.
\newblock {\em Doklady Akad. Nauk SSSR (N.S.)}, 84:221--224, 1952.

\bibitem{Scorpan05WildWorldOf4Mflds}
A.~Scorpan.
\newblock {\em The wild world of 4-manifolds}.
\newblock American Mathematical Society, Providence, RI, 2005.

\bibitem{Spreer10Diss}
J.~Spreer.
\newblock {\em Blowups, slicings and permutation groups in combinatorial
  topology}.
\newblock PhD thesis, University of Stuttgart, 2011.
\newblock Ph.D. thesis.

\bibitem{Spreer09CombPropsOfK3}
J.~Spreer and W.~K{\"u}hnel.
\newblock {C}ombinatorial properties of the {K}3 surface: {S}implicial blowups
  and slicings.
\newblock {\em Experiment. Math.}, 20(2):201--216, 2011.

\bibitem{Wall64SimpConn4Mflds}
C.~T.~C. Wall.
\newblock On simply-connected {$4$}-manifolds.
\newblock {\em J. London Math. Soc.}, 39:141--149, 1964.

\bibitem{Weil58K3Surf}
A.~Weil.
\newblock {\em {\OE}uvres scientifiques. {C}ollected papers. {V}olume {II}
  (1951--1964)}.
\newblock Springer-Verlag, Berlin, 2009.
\newblock Reprint of the 1979 original.

\end{thebibliography}
}

\end{document}